\def\A{{\bf A}}
\def\a{{\bf a}}
\def\B{{\bf B}}
\def\b{{\bf b}}
\def\D{{\bf D}}
\def\H{{\bf H}}
\def\G{{\bf G}}
\def\I{{\bf I}}
\def\L{{\bf L}}
\def\R{{\bf R}}
\def\X{{\bf X}}
\def\Y{{\bf Y}}
\def\Q{{\bf Q}}
\def\BP{{\bf P}}
\def\p{{\bf p}}
\def\S{{\bf S}}
\def\T{{\mathcal T}}
\def\x{{\bf x}}
\def\y{{\bf y}}
\def\Z{{\bf Z}}
\def\M{{\bf M}}
\def\N{{\bf N}}
\def\n{{\bf n}}
\def\U{{\bf U}}
\def\V{{\bf V}}
\def\v{{\bf v}}
\def\W{{\bf W}}
\def\0{{\bf 0}}
\def\1{{\bf 1}}
\def\wrt{{w.r.t.\ }}
\def\prodod{\otimes^\downarrow}
\newcommand{\SLG}{\mathfrak{SL}}
\newcommand{\GLG}{\mathfrak{GL}}
\newcommand{\OG}{\mathfrak{O}}
\newcommand{\HG}{\mathfrak{H}}
\newcommand{\SOG}{\mathfrak{SO}}
\newcommand{\UG}{\mathfrak{U}}
\newcommand{\UUTG}{\mathfrak{UUT}}
\newcommand{\LUTG}{\mathfrak{LUT}}
\newcommand{\Poly}{\operatorname{Poly}}
\newcommand{\Rasterize}{\operatorname{Rasterize}}
\newcommand{\eqdef}{\overset{\mathrm{def}}{=\joinrel=}}
\def\nnz#1{\|#1\|_0}
\def\norm#1{\|#1\|}
\def\Si{\mbox{\boldmath$\Sigma$\unboldmath}}
\def\Lam{\mbox{\boldmath$\Lambda$\unboldmath}}
\def\AM{{\mathcal A}}
\def\BM{{\mathcal B}}
\def\TM{{\mathcal T}}
\def\XM{{\mathcal X}}
\def\YM{{\mathcal Y}}
\def\MM{{\mathcal M}}
\def\GM{{\mathfrak G}}
\def\gm{{\mathfrak g}}
\def\RB{{\mathbb R}}
\def\FB{{\mathbb F}}
\def\ZM{{\mathcal Z}}
\def\fold{\operatorname{fold}}
\def\index{\operatorname{index}}
\def\conj#1{#1^{\mathrm{c}}}
\def\tr{\operatorname{tr}}
\def\rk{\operatorname{rank}}
\def\sorank{\operatorname{rank}_{so}}
\def\diag{\operatorname{diag}}
\def\dg{\operatorname{dg}}
\def\vect{\operatorname{vec}}
\def\arginf{\mathop{\rm arginf}}
\newtheorem{example}[theorem]{Example}
\newtheorem{remark}[theorem]{Remark}
\newtheorem{algorithm}[theorem]{Algorithm}
\title{Group Orbit Optimization: A Unified Approach to Data Normalization}
\author{Shuchang Zhou \footnotemark[3]
\thanks{Megvii Inc., Beijing, China, 100083.}
\and Zhihua Zhang\thanks{Department of Computer Science and Engineering, Shanghai Jiao Tong University, Shanghai, China 200240.} \and Xiaobing Feng\thanks{State Key Laboratory of Computer
Architecture, Institute of Computing Technology,
Chinese Academy of Sciences, Beijing, China, 100190.}}
\begin{document}
\maketitle


\begin{abstract}
In this paper we propose and study
an optimization problem over a matrix group orbit that we call \emph{Group Orbit Optimization} (GOO).
We prove that GOO can be used to induce matrix decomposition techniques such as singular value decomposition (SVD), LU decomposition,
QR decomposition, Schur decomposition and Cholesky decomposition, etc.
This gives rise to a unified framework for matrix decomposition and  allows us to bridge  these
matrix decomposition methods. Moreover, we generalize GOO for tensor
decomposition. As a concrete application of GOO, we devise a new data decomposition method over a special linear group to normalize point cloud data. Experiment results show that our normalization method is able to obtain recovery
well from distortions like shearing, rotation and squeezing.
\end{abstract}

\begin{keywords}
Singular value decomposition, Eigendecomposition,  Matrix group, Tensor
decomposition, Tucker decomposition, Data normalization
\end{keywords}

\begin{AMS}

\end{AMS}

\pagestyle{myheadings}
\thispagestyle{plain}
\markboth{SHUCHANG ZHOU, ZHIHUA ZHANG AND XIAOBING FENG}{DATA NORMALIZATION
BY GROUP ORBIT OPTIMIZATION}

\section{Introduction}
\label{sec:intro}

Real world data often contain some degrees of freedom that might be redundant. Matrix
decomposition~\cite{Golub:1996,DemmelBook:1997,TrefethenBook:1997} is an
important tool in machine learning and data mining to normalize data.
A prominent example of data normalization by matrix decomposition is principal
component analysis (PCA). When the given point cloud is represented as a matrix
with each row being coordinates of points, PCA removes the degree of freedom in
translation and rotation of the point cloud with the help of singular value decomposition (SVD) on the matrix.
The selection of particular matrix decomposition corresponds to which
degrees of freedom we would like to remove. In the PCA example, SVD extracts an
orthonormal basis that makes the normalized data invariant to rotation.

There are cases when other degrees of freedom exist in data. For example, planar
objects like digits, characters or iconic symbols, often look distorted in
photos because the camera sensor plane may not be parallel to the plane carrying
the objects.
Therefore in this case, the degrees of freedom we would like to eliminate from
data are homography transforms \cite{hartley2003multiple}, which can be
approximated as combination of translation, rotation, shearing and squeezing when the planar objects are
sufficient far away relative to their size.
However, PCA is not applicable to eliminate these degrees of freedom, because
the normalized form found with PCA is not invariant under shearing and
squeezing. In general, based on the property of data, we would need new data
normalization methods that can uncover invariant structures depending on the
degrees of freedom we would like to remove.


\begin{figure}
\begin{center}
\includegraphics[height=32mm, width=100mm]{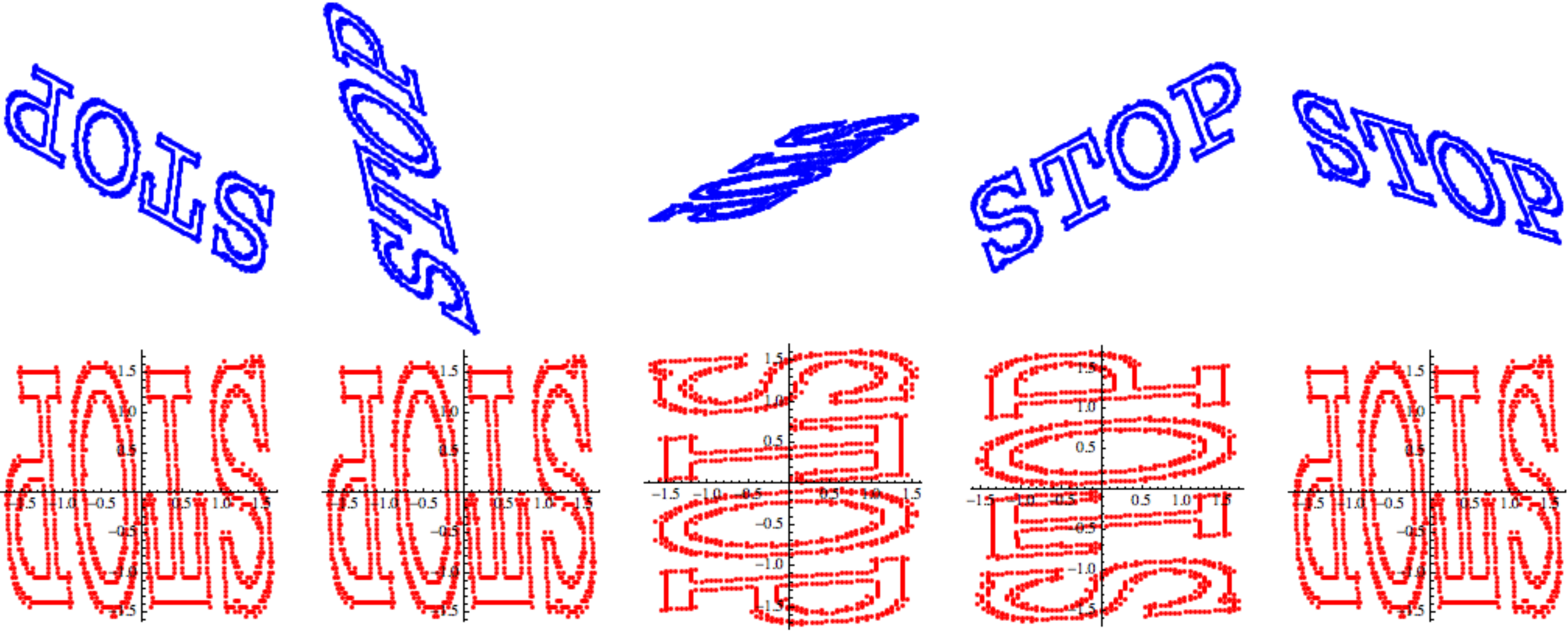}
\end{center}
   \caption{Normalization by optimization over orbit generated by special
   linear group $\SLG(2)$ for 2D point clouds. The first row contains point
   clouds before normalization; the second row consists of corresponding point clouds after
   normalization for each entry in the first row.
   It can be observed that point clouds in the second row are approximately the
   same, modulo four orientations (rotated clockwise by angle of 0,
   $\frac{\pi}2$, $\pi$, $\frac{3\pi}2$).}
\label{fig:special_linear}
\end{figure}

\begin{figure}
\begin{center}
\includegraphics[height=32mm, width=100mm]{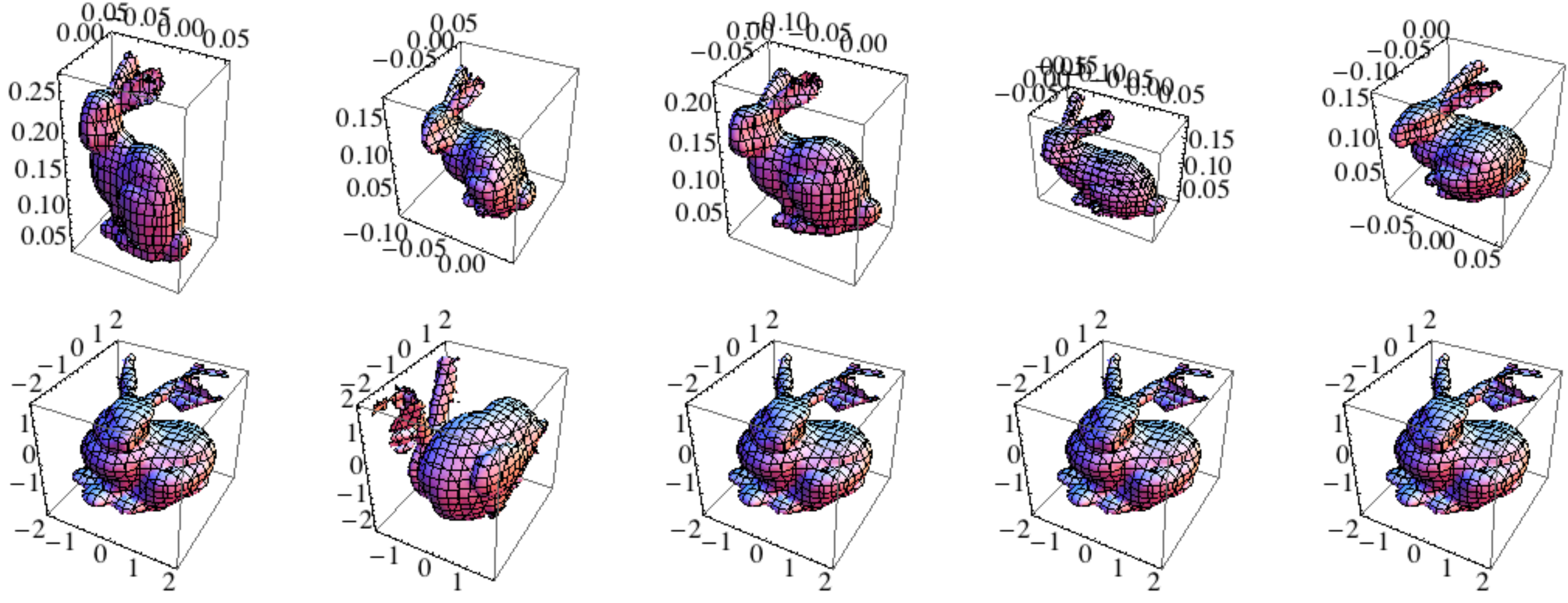}
\end{center}
   \caption{Normalization by optimization over orbit generated by special
   linear group $\SLG(3)$ for 3D point clouds. The first row contains point
   clouds before normalization.
   In particular, the ``rabbits'' are of different shapes and sizes.
   The second row consists of corresponding point clouds after normalization
   for each entry in the first row.
   It can be observed that point clouds in the second row are approximately
   the same, modulo different orientations of the same shape.}
\label{fig:special_linear_3d}
\end{figure}

In this paper we study the cases when degrees of freedom to be removed have a
group structure $\GM$ when combined. Under such a condition, a data matrix $\X$
can be mapped to its quotient set $\X/\sim$ by the equivalence relation $\sim$
defined as \[x_1\sim x_2\iff \exists g\in\GM, x_1 = g x_2 \text{.}\] We call the
elements of quotient set $\hat{\X} \in \X/\sim$ canonical forms of data, as they
are invariant with respect to (w.r.t.) group actions $g\in\GM$. An important
example of using the quotient set is the shape space method
\cite{dryden1998statistical}, which works in the quotient space of rotation
matrix and is closely related to PCA and SVD.

Here and later, we restrict ourselves to the case when $\GM$
is a matrix group and when the group acts by simple matrix product. The
quotient mapping $\X\rightarrow \hat{\X}$ can then be represented in the form of
matrix decomposition:
\[\X = \G\hat{\X}\text{, } \G\in\GM\text{.}\]

Instead of constructing separate algorithms for different $\GM$, we use an
optimization process to induce corresponding matrix decomposition techniques.
In particular, given a data matrix $\M$, we consider a group orbit optimization
(GOO) problem as follows:
\begin{align}
\label{eq:goo}
\inf_{\G \in \GM} \; \phi(\G \M),
\end{align}
where $\phi:
\FB^{n_1\times n_2}\rightarrow \RB$ is a cost function and $\FB$ is some number field.

In Section~\ref{sec:prelim} we present 
several special classes of cost functions, which are used to construct new
formulations for several matrix decompositions including SVD, Schur, LU,
Cholesky and QR in Section~\ref{sec:main}. As an application, in Section~\ref{sec:normal} we illustrate how to use GOO to normalize low dimensional point cloud data over a special linear group.
Experiment results for two-dimensional and three-dimensional point cloud are
given in Figure~\ref{fig:special_linear} and Figure~\ref{fig:special_linear_3d}.
It can be observed that the effect of rotation, shearing and squeezing in
data has been mostly eliminated in the normalized point clouds. The detail of
this normalization is explained in Section~\ref{sec:normal}.

The  GOO formulation also allows us to construct
generalizations of some matrix decompositions to tensor. Real world
data have tensor structure when some value depends on multiple factors. For
example, in an electronic-commerce site, user preferences in different brands
form a matrix. As such preferences change over time, the time-dependent
preferences form a \nth{3} order tensor. As in the matrix case, tensor
decomposition techniques~\cite{Kolda:2009:TDA:1655228.1655230,
krishnamurthy2013low} aim to eliminate degrees of freedom in data while respecting the tensor structure of data. In
Section~\ref{sec:tensor}, we use GOO to induce tensor decompositions that can be
used for normalizing tensor. In the unified framework of GOO, the GOO inducing
tensor decomposition when applied to a \nth{2} order tensor, is exactly the same
as the GOO inducing matrix decomposition, when the same group and cost function
is used for both GOO problems.

The remainder of paper is organized as follows. Section~\ref{sec:notation} gives
notation used in this paper. Section~\ref{sec:prelim} defines several
properties for describing the cost function used in defining GOO to induce
matrix and tensor decompositions. Section~\ref{sec:main} studies GOO formulations
that can induce SVD, Schur, LU, Cholesky, QR, etc. Section~\ref{sec:tensor}
demonstrates how to use GOO to induce tensor decompositions and prove a few inequalities relating
a few forms of GOO. Section~\ref{sec:normal} demonstrates how to normalize
point cloud data distorted by rotation, shearing and squeezing with GOO over
the special linear group.
Section~\ref{sec:exps} presents numerical algorithms and examples of matrix
decomposition, point cloud normalization and tensor decomposition.
Finally, we conclude the work in Section~\ref{sec:conclusion}.

\section{Notation}
\label{sec:notation}

\subsection{Matrix operation notation}
In this paper, 
we let $\I_r$ denote the $r\times r$ identity matrix. Given an $n{\times}m$
matrix $\X=[x_{ij}]$, we denote $|\X|=[|x_{ij}|]$ and $\vect(\X)=[x_{11},
\ldots, x_{n1}, x_{12}, \ldots, x_{nm}]^\top$. The $\ell_p$-norm of $\X$ is
defined by \[\|\X\|_p\eqdef (\sum_{ij} |x_{ij}|^p)^{\frac1p}\] for $p\ge 0$. Note that
we abuse the notation a little bit as $\|\X\|_p$ is not a norm when $p<1$.
When $p=2$, it is also called the Frobenius norm and usually denoted by
$\|\X\|_F$.
When applied to vector $\x$, $\|\x\|_2$ is the  $\ell_2$-norm  and it is
shortened as $\|\x\|$. The dual norm of the $p$-norm where $p\ge 1$ is
equivalent to the $q$-norm, where $\frac{1}{p}+\frac{1}{q}=1$.
We let $\|\X\|_{*p}$ denote the Schatten $p$-norm; that is,  it is the $\ell_p$ norm of the vector of
the singular values of $\X$.

Assume that $\mathbb{F}$ is some number field.
Let $\conj{\X}$ be the complex conjugate
of $\X$, and $\X^*$ be the complex conjugate transpose of $\X$.
Let $\dg(\M)$ be  a vector consisting of the diagonal entries of $\M$, and
$\diag(\v)$ be a matrix with $\v$ as its diagonals.

Given two matrices $\A$ and $\B$, $\A\odot \B$ is
their Hadamard product and
$\A \otimes \B$ is the Kronecker product. Similarly, $\x\otimes \y$ is the Kronecker product of vectors $\x$ and
$\y$. For groups $\GM_1$ and $\GM_2$, we denote group
$\{\G_1\otimes\G_2:\, \G_1\in\GM_1, \G_2\in\GM_2\}$ as $\GM_1\otimes\GM_2$. The
Kronecker sum for two square matrices $\A\in\FB^{m\times m}, \B\in\FB^{n\times
n}$ is defined as \[\A\oplus\B = \A\otimes \I_n + \I_m \otimes \B \text{.}\]


\begin{definition}
A matrix $\A \in \mathbb{F}^{m\times n}$ is said to be pseudo-diagonal if
there exist  permutation matrices $\BP$ and $\Q$ such that $\BP \A \Q^\top$
is diagonal.
\end{definition}


\begin{remark}
Note that a diagonal matrix is also pseudo-diagonal.
\end{remark}

\begin{lemma}
Given a pseudo-diagonal matrix $\A$, we have that
\begin{enumerate}
\item[\emph{(i)}] $\A^*\A$, $\A \A^*$, $\A^\top\A$ and $\A \A^\top$
are diagonal.
\item[\emph{(ii)}] There exists a row permutation matrix $\BP$ such that $\BP \A$ is
diagonal.
\item[\emph{(iii)}] There exists a row permutation matrix $\BP$ such that $\A \BP^\top$
is diagonal.
\end{enumerate}
\end{lemma}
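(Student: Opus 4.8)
The plan is to reduce every assertion to the defining factorization. By pseudo-diagonality there are permutation matrices $\BP_0$ and $\Q_0$ with $\BP_0 \A \Q_0^\top = \D$ for a rectangular diagonal $\D$; since permutation matrices are real and orthogonal, $\BP_0^{-1} = \BP_0^\top = \BP_0^*$ and likewise for $\Q_0$, so I can invert this to $\A = \BP_0^\top \D \Q_0$. The one fact I would use repeatedly is that conjugating a diagonal matrix by a permutation matrix merely permutes its diagonal entries, hence stays diagonal: for a permutation matrix $\BP$ realizing $\sigma$ and a diagonal $M$, a direct index computation gives $(\BP M \BP^\top)_{ij} = m_{\sigma(i)}\,\delta_{ij}$.

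For (i) I would substitute $\A = \BP_0^\top \D \Q_0$ into each of the four products and cancel the orthogonal factors. For instance $\A^*\A = \Q_0^\top \D^* \BP_0 \BP_0^\top \D \Q_0 = \Q_0^\top (\D^*\D)\,\Q_0$, where $\D^*\D = \diag(|d_{11}|^2, |d_{22}|^2, \dots)$ is diagonal, so the outer permutation conjugation keeps it diagonal. The remaining cases $\A\A^*$, $\A^\top\A$, $\A\A^\top$ are identical after replacing $\D^*\D$ by $\D\D^*$, $\D^\top\D$, $\D\D^\top$, each of which is again diagonal. This part is completely general and uses no squareness.

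For (ii) and (iii) I would first record the combinatorial characterization behind pseudo-diagonality: since $\D$ has at most one nonzero per row and per column, and left/right multiplication by permutations only permutes rows and columns, $\A$ itself has at most one nonzero entry in each row and in each column. Its nonzeros thus occupy positions $(i_1,j_1),\dots,(i_r,j_r)$ with the $i_k$ pairwise distinct and the $j_k$ pairwise distinct. To obtain (ii) I would build a row permutation $\rho$ with $\rho(i_k)=j_k$ for every $k$ and extend it arbitrarily on the unused indices; the associated matrix $\BP$ then carries each nonzero to the diagonal slot $(j_k,j_k)$, while every other entry of $\BP\A$ comes from a zero of $\A$, so $\BP\A$ is diagonal. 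Part (iii) is the transpose of this argument, producing a column permutation instead.

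The step I expect to be delicate is precisely this extension of the partial map $i_k \mapsto j_k$ to a genuine permutation. It is automatic when $\A$ is square, since the occupied rows and the occupied columns are then equinumerous subsets of the same index set $\{1,\dots,n\}$, so the unused rows biject onto the unused columns. For a genuinely rectangular $\A$ one must additionally check that the targets $j_k$ lie among the available rows: already the $1\times 2$ block $[0,1]$ is pseudo-diagonal yet admits no row permutation making it diagonal. I would therefore either restrict (ii) and (iii) to the square case, which is the case invoked in the later decomposition results, or state (ii) under $m\ge n$ and (iii) under $n\ge m$, where the counting guarantees the partial bijection completes; the remainder is routine bookkeeping.
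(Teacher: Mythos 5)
The paper states this lemma without any proof, so there is nothing to compare against line by line; your argument is the natural one and it is correct. Part (i) via $\A = \BP_0^\top \D \Q_0$ and cancellation of the orthogonal permutation factors (e.g.\ $\A^*\A = \Q_0^\top(\D^*\D)\Q_0$, diagonal because conjugation by a permutation only permutes diagonal entries) is complete and, as you note, needs no squareness. More importantly, your caveat about (ii) and (iii) is a genuine and correct refinement, not mere caution: as literally stated, with the paper's definition of pseudo-diagonal allowing $\A \in \mathbb{F}^{m\times n}$, parts (ii) and (iii) are \emph{false} in the rectangular case. Your counterexample is right: $\A = [0,\ 1]$ is pseudo-diagonal (a column swap turns it into $[1,\ 0]$), yet the only $1\times 1$ row permutation is the identity, so no $\BP$ makes $\BP\A$ diagonal; the transposed example kills (iii). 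Your fix --- restricting to square $\A$, or assuming $m\ge n$ for (ii) and $n\ge m$ for (iii) so that the partial injection $i_k \mapsto j_k$ between occupied rows and occupied columns extends to a full permutation --- is exactly what is needed, and it is harmless for the paper's later uses (in Lemma 4.5 and its relatives the pseudo-diagonal matrix $\D$ is $n\times n$, where both conditions hold automatically). The only point worth tightening in your write-up is to state explicitly the convention for the row permutation ($(\BP\A)_{\rho(i),j} = a_{ij}$, say), so that ``carries each nonzero to the slot $(j_k,j_k)$'' is an immediate index check rather than an appeal to intuition.
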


We let $\Poly{(\M)}$ be the polyhedral formed by points with coordinates being
rows of $\M$, and $\mu(\Poly{(\M)})$ be the Lebesgue measure of $\Poly{(\M)}$.
We let $\Rasterize(\Poly{(\M)})$ be a matrix $\Z$ where $z_{ij}$ is the
image pixel value at coordinate $(i, j)$ of image rasterized from polyhedral
$\Poly{(\M)}$ with unit grid.

\subsection{Tensor operation notation}

The notation of tensor operations used in this paper mostly follows that of
\cite{Kolda:2009:TDA:1655228.1655230}.
Given an order-$k$ tensor $\XM \in \mathbb{F}^{n_1\times n_2\times \ldots \times n_k}$
and $k$ matrices $\{\U_i\}_{i=1}^k$ where $ \U_i \in
\mathbb{F}^{m_i\times n_i}$, we define $\times_k$ to be the inner product over the
$k$-th mode. That is, if $\YM = \XM \times_a \U_a\in \mathbb{F}^{n_1\times
n_2\times \ldots \times n_{a-1} \times m_a \times n_{a+1} \times \ldots \times
n_k}$, then \[y_{i_1\cdots i_{a-1}j i_{a+1}\cdots i_k} = \sum_{i_a=1}^{n_a} x_{i_1i_2\cdots i_k} u_{j
i_a}.
\]
For shorthand,
we denote \[\prod_i \XM \U_i \eqdef \XM \times_1 \U_1
\times_2 \U_2 \cdots \times_k \U_k \textrm{.}\]
Here $\YM = \prod_i \XM \U_i$ when $\forall i, m_i = n_i$ is also known as
the Tucker decomposition in the literature \cite{tucker1966some}. With this notation,
the SVD of a real matrix $\M=\U_1\Si \U_2^\top$ can
be written as \[\M= \Si \times_1 \U_1\times_2 \U_2=\prod_{i=1}^2
\Si \U_i\text{.}\]
Using the vectorization operation for tensor, we have
\[
\vect{(\prod_i \XM \U_i)} = [\U_n \otimes\U_{n-1}\otimes\dotsb\otimes\U_{1}] \vect{(\XM)}
\eqdef \prodod_i\U_i \vect{(\XM)},
\]
where we denote $\prodod_i\U_i$ as shorthand for
$\U_n\otimes\U_{n-1}\otimes\dotsb\otimes\U_{1}$.

We let $\index_{n_1, n_2, \ldots, n_k}(I)$ be a map from a sequence of indices $I = {i_1, i_2, \ldots,
i_k}$ to an integer such that
\begin{align}
[\vect{\XM}]_{\index_{n_1, n_2, \ldots, n_k}({i_1, i_2, \ldots, i_k})} =
\XM_{i_1, i_2, \ldots, i_k}
\textrm{.}
\end{align}
We note that $\index^{-1}_N(n)$ is well-defined.

The unfold operation maps a tensor to a tensor of lower order and is defined by
\[\fold^{-1}_J: \FB^{n_1\times n_2\times \ldots \times n_k}\mapsto
\mathbb{F}^{m_1\times m_2\times \ldots \times m_l}\;,\] where $J$ is an index
set grouping of the indices $I=\{1, 2, \ldots, k\}$ into sets $J=\{J_1, J_2, \ldots,
J_l\}$, $m_o=\prod_{t\in J_o} t$, and satisfies:
\[
\vect[\fold^{-1}_J(\AM)] = \vect(\AM) \text{.}
\]
When unfolding a single index, i.e.,
$J=\{\{j\}, I-\{j\}\}$, we also denote $\fold^{-1}_J$ as $\fold^{-1}_j$.

The $\ell_p$-norm of tensor $\AM$ is defined as \[\|\AM\|_p \eqdef
\|\fold^{-1}_i \mathcal A\|_p\] for an arbitrary mode $i$.
For tensors $\AM, \BM\in \FB^{n_1\times n_2\times \ldots \times n_k}$,
$\langle \AM, \BM \rangle$ is their Frobenius inner product
defined as:
\[
\langle \AM, \BM \rangle\eqdef\langle \vect(\AM), \vect(\BM) \rangle \text{.}
\]

Finally, given $f: \FB\rightarrow\FB$ and $\T\in\FB^{n_1\times n_2\times \ldots
\times n_k}$, $f(\T)$ is defined as a tensor-valued function with $f$ applied to
each entry of $\T$. Therefore, $f(\T)\in\FB^{n_1\times n_2\times \ldots \times
n_k}$. When $f(x) = |x|$, we denote $f(\T)$ as $|\T|$.


\subsection{Group notation}

$\OG$ is the orthogonal group over real field $\RB$. $\SOG$ is the special
orthogonal group over $\RB$.
$\UG$ is the unitary group over complex field. We let
$\UUTG(n)$ denote the upper-unit-triangular group and $\LUTG(n)$ denote
the lower-unit-triangular group, both of which have all entries along the
diagonals being $1$.
$\HG$ is the group formed by (calibrated) homography transform below:
\[
\H_{2w} = \R_{2w}(\I_3+ \p_2 \frac{\n^\top}{d}) \text{,}
\]
where $\R_{2w}\in\SOG$ is attitude of the camera; $\p_2$
is position of the camera, and $\n^\top \x = d$ is equation of the object
plane.

\section{Preliminaries}
\label{sec:prelim}

In this paper we would like to show that matrix and tensor decompositions
techniques can be induced from formulations of the group orbit optimization.
As we have seen in formula (\ref{eq:goo}), a GOO problem includes two key
ingredients:
a cost function $\phi$ and a group structure $\GM$.
Thus, we present preliminaries, including \emph{sparsifying function}
and a \emph{unit matrix group}. The sparsifying functions will be used to define
cost functions for some matrix decompositions in Table~\ref{tab:matrix-decomp}
that have diagonal matrices in decomposed formulations.

It should be noted that other classes of functions can be used together with
some unit matrix groups to induce interesting matrix and tensor decompositions.
Confer Schur decomposition in Table~\ref{tab:matrix-decomp} for an example.

\subsection{Sparsifying functions}
 For two functions $f$ and $g$, we here and later denote their composition as $f\circ g$ s.t.\
$f\circ g(x)\eqdef f(g(x))$.
We first prove several utility lemmas used for characterizing sparsifying
functions.

\begin{lemma}[Subadditive properties] \label{lem:41}
If $f(\sqrt{x})$ is subadditive, then
\begin{enumerate}
\item[\emph{(1)}] $\sum_{i=1}^n f(|x_i|)\geq f(\|\x\|)$ where
$\x\in\FB^n$.
\item[\emph{(2)}] $f(0)\geq 0$.
\end{enumerate}
\end{lemma}
\begin{proof} First we have that
 $\sum_{i=1}^n f(|x_i|) = \sum_{i=1}^n f(\sqrt{x_i^2}) \geq
  f(\sqrt{\sum_{i=1}^n x_i^2}) = f(\|\x\|)$.
 By the subadditivity of $f(\sqrt{x})$ we further have $f(\sqrt{0}) +
 f(\sqrt{0}) \ge f(\sqrt{0 + 0}) = f(\sqrt 0)$, hence $f(0) = f(\sqrt 0) \ge 0$.
\end{proof}

\begin{lemma} \label{lem:42}
If $f(e^x)$ is convex for any $x \in \FB$, then when $\forall i, x_i\neq 0$, we
have: \[ \sum_{i=1}^n f(|x_i|) \geq n f\big((\prod_{i=1}^n |x_i|)^{\frac 1
n}\big)\quad .
\]
\end{lemma}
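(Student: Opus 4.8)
The plan is to reduce this to the standard Jensen inequality via the logarithmic substitution $y_i \eqdef \ln|x_i|$, which is well-defined precisely because the hypothesis $x_i \neq 0$ forces $|x_i| > 0$ for every $i$. Setting $g(t) \eqdef f(e^t)$, the assumption states that $g$ is convex on $\FB$, and by construction $f(|x_i|) = f(e^{y_i}) = g(y_i)$. So the problem of bounding a sum of values $f(|x_i|)$ becomes the problem of bounding a sum of values of the convex function $g$, which is exactly the setting of Jensen's inequality.

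Next I would apply Jensen's inequality to $g$ at the points $y_1, \ldots, y_n$ with equal weights $1/n$, obtaining
\[
\frac{1}{n}\sum_{i=1}^n g(y_i) \geq g\Big(\frac{1}{n}\sum_{i=1}^n y_i\Big).
\]
Substituting back $y_i = \ln|x_i|$ and $g = f \circ \exp$ turns the left-hand side into $\frac{1}{n}\sum_{i=1}^n f(|x_i|)$, and the argument on the right into $\frac{1}{n}\sum_{i=1}^n \ln|x_i|$. The final step is to recognize the geometric mean through the identity $\exp\big(\frac{1}{n}\sum_{i=1}^n \ln|x_i|\big) = (\prod_{i=1}^n |x_i|)^{1/n}$, which yields
\[
\frac{1}{n}\sum_{i=1}^n f(|x_i|) \geq f\Big((\prod_{i=1}^n |x_i|)^{1/n}\Big);
\]
multiplying through by $n$ then gives the stated inequality.

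I do not expect a serious obstacle here, since the result is an AM--GM-flavored bound and convexity of $f(e^x)$ is precisely the condition that makes such a bound hold. The only points that genuinely require care are the two that the hypotheses single out: the non-vanishing condition $x_i \neq 0$, which is exactly what legitimizes the change of variables $y_i = \ln|x_i|$, and the convexity of the composite $f(e^x)$ rather than of $f$ itself, which is what converts the arithmetic mean of the $y_i$ (appearing inside $g$) into the geometric mean of the $|x_i|$ (appearing inside $f$).
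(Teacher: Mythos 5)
Your proposal is correct and follows exactly the paper's argument: the paper's one-line proof also rewrites $f(|x_i|) = f(e^{\ln|x_i|})$ and applies Jensen's inequality to the convex function $f(e^x)$ at the points $\ln|x_i|$, recovering the geometric mean via $e^{\frac{1}{n}\sum_i \ln|x_i|} = (\prod_i |x_i|)^{1/n}$. Your write-up simply makes explicit the change of variables and the role of the hypothesis $x_i \neq 0$ that the paper leaves implicit.
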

\begin{proof}
Since $f(e^x)$ is convex,  we have
\[
\sum_{i=1}^n f(|x_i|) = \sum_{i=1}^n f(e^{\ln |x_i|})\geq n f(e^{\frac{1}{n} \sum_{i=1}^n \ln |x_i|})
= n f((\prod_{i=1}^n |x_i|)^{\frac{1}{n}}).
\]
\end{proof}

\begin{lemma}\label{lem:43} If  $f$ is strictly concave and $f(0)\ge 0$, then
$f(tx)\geq t f(x)$ where $0\leq t \leq 1$, with equality only when
$t=0,1$ or $x=0$.
\end{lemma}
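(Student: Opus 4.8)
The plan is to recognize $tx$ as a convex combination of the two points $x$ and $0$, namely $tx = t\,x + (1-t)\,0$ for $t \in [0,1]$, and then invoke the definition of concavity directly. Concavity of $f$ immediately yields
\[
f(tx) = f\bigl(t\, x + (1-t)\, 0\bigr) \ge t f(x) + (1-t) f(0).
\]
Since $0 \le t \le 1$ we have $1 - t \ge 0$, and by hypothesis $f(0) \ge 0$, so the trailing term $(1-t) f(0)$ is nonnegative and may simply be dropped to obtain $f(tx) \ge t f(x)$, which is precisely the desired inequality.

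For the equality characterization I would argue the contrapositive: assume $0 < t < 1$ and $x \neq 0$, and show the inequality is in fact strict. In this regime the two points $x$ and $0$ being combined are genuinely distinct (because $x \neq 0$) and the weight $t$ lies strictly inside $(0,1)$, so strict concavity applies and gives $f(tx) > t f(x) + (1-t) f(0)$. Combined again with $(1-t) f(0) \ge 0$, this produces the strict bound $f(tx) > t f(x)$. Consequently, equality $f(tx) = t f(x)$ can occur only when one of the degenerate conditions $t = 0$, $t = 1$, or $x = 0$ holds, exactly as stated.

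The only delicate point, and the step I would be most careful about, is ensuring strict concavity is invoked only when the convex combination is nondegenerate. At $t = 0$ or $t = 1$ the combination collapses onto a single endpoint, and when $x = 0$ the two endpoints coincide; in all three cases no strict inequality is guaranteed, which is why these are precisely the situations excluded in the equality clause. I do not anticipate a substantive obstacle here, as the lemma is essentially the definition of strict concavity specialized to combinations with the origin, with the assumption $f(0) \ge 0$ used solely to discard the $(1-t) f(0)$ term.
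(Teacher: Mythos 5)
Your proposal is correct and follows essentially the same route as the paper: write $tx$ as the convex combination $tx + (1-t)0$, apply concavity, and discard the nonnegative term $(1-t)f(0)$. The paper dismisses the equality case with a single ``obviously,'' so your explicit contrapositive argument via strict concavity on the nondegenerate combination is the same idea, just spelled out more carefully.
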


\begin{proof} We have $f(tx) = f(tx +(1-t)0)\geq t f(x)+(1-t)f(0)\geq t f(x)$.
Obviously,  the first equality holds only when $x=0$ or $t=0,1$.
\end{proof}

\begin{lemma}
\label{lem:44}
Assume $f(x) = f(|x|)$. Then $f$ is
concave and $f(0)\geq  0$ iff  $f$ is concave and subadditive.
\end{lemma}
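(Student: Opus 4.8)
The plan is to exploit the fact that both sides of the equivalence share the hypothesis that $f$ is concave, so the claim reduces to the following statement: for an even function (i.e. $f(x)=f(|x|)$) that is concave, one has $f(0)\ge 0$ if and only if $f$ is subadditive. Since $f(x)=f(|x|)$, it suffices to reason about non-negative arguments, and this restriction is essential: a full-line version of the claim fails, as $f(x)=-x^2$ (concave, even, $f(0)=0$) is not subadditive on all of $\FB$, e.g. $f(1)+f(-1)=-2 < 0 = f(0)$. The reverse implication of the reduced statement is immediate: if $f$ is subadditive, then taking $x=y=0$ in $f(x+y)\le f(x)+f(y)$ gives $f(0)\le 2f(0)$, hence $f(0)\ge 0$; this needs neither concavity nor evenness and is in the spirit of part (2) of Lemma~\ref{lem:41}.

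The substantive direction is to show that a concave $f$ with $f(0)\ge 0$ is subadditive. First I would establish the scaling inequality $f(tx)\ge t f(x)$ for all $0\le t\le 1$ and $x\ge 0$, exactly as in the proof of Lemma~\ref{lem:43}: writing $tx = tx+(1-t)\cdot 0$ and invoking concavity yields $f(tx)\ge t f(x)+(1-t)f(0)\ge t f(x)$, where the final step uses $f(0)\ge 0$; note that only concavity is needed here, not strict concavity. Then, given $a,b\ge 0$ with $s\eqdef a+b>0$, I would apply this with $t=a/s$ and $t=b/s$ to obtain $f(a)=f(\tfrac{a}{s}s)\ge \tfrac{a}{s}f(s)$ and $f(b)\ge \tfrac{b}{s}f(s)$. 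Adding these and using $\tfrac{a}{s}+\tfrac{b}{s}=1$ gives $f(a)+f(b)\ge f(s)=f(a+b)$, which is precisely subadditivity. The degenerate case $a=b=0$ reduces once more to $f(0)\le 2f(0)$, valid because $f(0)\ge 0$.

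I expect the main obstacle to be conceptual rather than computational: being careful that subadditivity is the correct assertion only for the non-negative arguments on which the even function $f$ is effectively determined, and recognizing that the normalization $a=(a/s)\,s$ combined with the scaling inequality is exactly what converts pointwise concavity into the additive bound. Once the scaling inequality borrowed from Lemma~\ref{lem:43} is in hand, the remaining steps are routine convex-combination manipulations, so the proof should be short.
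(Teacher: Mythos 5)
Your proof is correct and follows essentially the same route as the paper's: the reverse direction via $f(0)+f(0)\ge f(0)$, and the forward direction via the scaling inequality $f(tx)\ge tf(x)$ (concavity plus $f(0)\ge 0$) applied to $t=a/(a+b)$ and $t=b/(a+b)$, with the degenerate case $a=b=0$ handled separately. Your added observation that the equivalence must be read on non-negative arguments --- witnessed by $f(x)=-x^2$, which is concave, even, has $f(0)=0$, yet fails full-line subadditivity --- is a worthwhile clarification of the ``w.l.o.g.\ $x\ge 0$'' step that the paper leaves implicit.
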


\begin{proof} Because $f(x) = f(|x|)$, w.l.o.g.\ we assume $x \ge 0$.
We first prove ``$\Rightarrow$ part". When $a=0$ and $b=0$, we trivially have
$f(a)+f(b)\ge f(a+b)$. Otherwise, we have \[f(tx) = f(tx
+(1-t)0)\geq t f(x)+(1-t)f(0)\geq t f(x)\text{.}\] Thus, when $a\neq 0$ or $b
\neq 0$, \[f(a)+f(b)=f(\frac{(a+b)
a}{a+b})+f(\frac{(a+b) b}{a+b})\ge \frac{a}{a+b} f(a+b)+\frac{b}{a+b}
f(a+b)=f(a+b)\text{.}\]

As for ``$\Leftarrow$ part",  we have $f(0) + f(0)\geq f(0 + 0)$.
Hence $f(0)\geq 0$.
\end{proof}

Now we are ready to define the sparsifying function.
\begin{definition}[sparsifying function] \label{def:42}
A function f is sparsifying if
\begin{enumerate}
  \item[\emph{(a)}] $f$ is symmetric about the origin; i.e., $f(x)=f(|x|)$;
  \item[\emph{(b)}] $f(\sum_i |x_i|)=\sum_i f(|x_i|) \Longrightarrow$ there is
  at most one $i$ with $x_i\neq 0$.
\end{enumerate}
\end{definition}

The following theorem gives a  sufficient condition for function $f$  to be sparsifying.

\begin{theorem}[sufficient condition for sparsifying]\label{thm:sparsifying}
If $f(x) = f(|x|)$ and $f$ is strictly concave and subadditive, then $f$ is
sparsifying.
\end{theorem}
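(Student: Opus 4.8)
The plan is to verify the two clauses of Definition~\ref{def:42} in turn. Clause~(a) is immediate: it is literally the hypothesis $f(x)=f(|x|)$, so no work is required there. All the content lies in clause~(b), namely showing that whenever the subadditivity inequality is tight, $f(\sum_i|x_i|)=\sum_i f(|x_i|)$, at most one coordinate $x_i$ can be nonzero. The strategy is to reduce the equality in the sum to a collection of pointwise equalities and then invoke the \emph{strict} inequality from Lemma~\ref{lem:43}, whose equality cases are exactly the degenerate endpoints; this is where strict concavity (as opposed to mere concavity) does the real work.

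First I would record that $f(0)\ge 0$. Since $f(x)=f(|x|)$ and $f$ is concave and subadditive, Lemma~\ref{lem:44} (its ``$\Leftarrow$'' direction) yields $f(0)\ge 0$, which is the precise hypothesis needed to apply Lemma~\ref{lem:43}. Next, using clause~(a) I would pass to absolute values and assume without loss of generality that every $x_i\ge 0$, and set $S\eqdef\sum_i x_i$. The case $S=0$ is disposed of at once: a sum of nonnegative reals equal to zero forces every $x_i=0$, so there is certainly at most one nonzero coordinate.

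For the main case $S>0$, I would apply Lemma~\ref{lem:43} termwise with $t=x_i/S\in[0,1]$ and argument $S$, giving $f(x_i)=f\bigl(\tfrac{x_i}{S}\,S\bigr)\ge \tfrac{x_i}{S}\,f(S)$ for each $i$. Summing over $i$ and using $\sum_i x_i=S$ recovers $\sum_i f(x_i)\ge f(S)=f(\sum_i x_i)$, i.e.\ subadditivity, as a consistency check. The crux is the equality analysis: the assumed identity $\sum_i f(x_i)=f(S)$ means the summed inequality is tight, and since every term satisfies $f(x_i)\ge \tfrac{x_i}{S}f(S)$, tightness of the sum forces equality in \emph{each} term. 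By the equality clause of Lemma~\ref{lem:43}, with $S\neq 0$, equality in term $i$ holds only when $t=x_i/S\in\{0,1\}$, that is $x_i\in\{0,S\}$.

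The final step is combinatorial. Each $x_i$ lies in $\{0,S\}$ and $\sum_i x_i=S$ with $S>0$; if two distinct coordinates equalled $S$ the sum would be at least $2S>S$, a contradiction, so exactly one coordinate equals $S$ and all others vanish. Hence at most one $x_i$ is nonzero, establishing clause~(b) and completing the proof. I expect the only delicate point to be the argument that tightness of the summed inequality propagates to termwise equality (so that the strict-concavity endpoint characterization can be invoked), together with correctly matching Lemma~\ref{lem:43}'s ``$x=0$'' exceptional case to ``$S=0$'' and isolating it beforehand.
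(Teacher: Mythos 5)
Your proposal is correct and follows essentially the same route as the paper's own proof: both derive $f(0)\ge 0$ from Lemma~\ref{lem:44}, isolate the $\sum_i x_i=0$ case, and then apply Lemma~\ref{lem:43} with $t=x_i/\sum_j x_j$ so that tightness forces each ratio into $\{0,1\}$, whence at most one coordinate is nonzero. Your write-up is in fact slightly more careful than the paper's at the two delicate points you flagged (propagating equality of the sum to termwise equality, and the final counting step), but the underlying argument is identical.
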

\begin{proof}
Because $f(x) = f(|x|)$, w.l.o.g.\ we assume $x\ge 0$. By Lemma~\ref{lem:44}, $f$ is strictly concave and $f(0)\ge 0$.
When $\sum_i x_i= 0$, there is no $i$ with $x_i=0$. Otherwise, it follows from
Lemma~\ref{lem:43} that
\[
\sum_i f(x_i) = \sum_i f \big(\frac{x_i}{\sum_{j}
x_j} \sum_j x_j \big) \geq \sum_i \frac{x_i}{\sum_{j} x_j}  f(\sum_j
x_j)=f(\sum_j x_j)\text{.}
\]
Also by Lemma~\ref{lem:43}, the equality holds iff  $\frac{x_i}{\sum_j x_j}=0
\text{ or } 1$. Because $\sum_i \frac{x_i}{\sum_j x_j} = 1$, there is only
one $i$ with $x_i\neq 0$.
In both cases, there is at most one $i$ with $x_i\neq 0$.
\end{proof}

\begin{corollary} Conical combination of sparsifying functions. In particular,
if $f$ and $g$ are sparsifying, then so is $\alpha f + \beta g$ where $\alpha$ and  $\beta$ are two nonnegative constants.
\end{corollary}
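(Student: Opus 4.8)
The plan is to verify the two defining conditions of a sparsifying function (Definition~\ref{def:42}) for $h \eqdef \alpha f + \beta g$ directly, leaning on the subadditivity carried by the sparsifying functions at hand. Condition (a) is immediate: since $f(x)=f(|x|)$ and $g(x)=g(|x|)$, linearity gives $h(x)=\alpha f(|x|)+\beta g(|x|)=h(|x|)$. All the work is in condition (b), and the governing principle is that the gap in the subadditivity inequality for $h$ is a \emph{nonnegative} combination of the corresponding gaps for $f$ and $g$, so it can close only if the individual gaps close.

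First I would record that the sparsifying functions in play are subadditive, so that $f(\sum_i|x_i|)\le \sum_i f(|x_i|)$ and likewise for $g$; in the concave regime this is exactly the content of Lemma~\ref{lem:44} and is the standing hypothesis behind Theorem~\ref{thm:sparsifying}. A conical combination of subadditive functions is again subadditive, so $h$ too obeys $h(\sum_i|x_i|)\le \sum_i h(|x_i|)$, keeping it in the same well-behaved class. Next, suppose the equality $h(\sum_i|x_i|)=\sum_i h(|x_i|)$ holds. Writing it out and rearranging, I would express it as
\[
\alpha\Big(\sum_i f(|x_i|)-f(\textstyle\sum_i|x_i|)\Big)+\beta\Big(\sum_i g(|x_i|)-g(\textstyle\sum_i|x_i|)\Big)=0 .
\]
By subadditivity both parenthesized deficits are nonnegative, and $\alpha,\beta\ge 0$, so the left-hand side is a sum of two nonnegative terms and can vanish only if each vanishes separately.

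Assuming (as one must, since $h\equiv 0$ fails condition (b) whenever two or more coordinates are nonzero) that at least one coefficient is positive, say $\alpha>0$, this forces $f(\sum_i|x_i|)=\sum_i f(|x_i|)$. Because $f$ is sparsifying, at most one $x_i$ is nonzero, which is precisely condition (b) for $h$; if instead only $\beta>0$, the symmetric argument through $g$ applies.

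I expect the main obstacle to be conceptual rather than computational: the direction of the subadditivity inequality is essential. If one relied only on the bare equality-condition of Definition~\ref{def:42} without subadditivity, the deficits of $f$ and $g$ could carry opposite signs and cancel at some configuration with two or more nonzero entries (for instance $f(x)=x^2$ satisfies the literal condition (b) yet is superadditive), so the conclusion would fail. I would therefore be careful to invoke subadditivity explicitly and to exclude the degenerate case $\alpha=\beta=0$. With those two points handled, the ``sum of nonnegatives is zero'' step closes the argument at once.
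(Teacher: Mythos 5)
Your proof is correct (modulo the caveats you yourself flag) but it follows a genuinely different route from the paper's. The paper's proof never touches condition (b) of Definition~\ref{def:42} directly: it observes that strict concavity and subadditivity are each preserved under conical combination (proving only the subadditivity step), and then implicitly re-applies Theorem~\ref{thm:sparsifying} to conclude that $\alpha f+\beta g$ is sparsifying; in other words, it works entirely at the level of the \emph{sufficient condition}, reading ``sparsifying'' as ``symmetric, strictly concave and subadditive.'' You instead verify Definition~\ref{def:42} itself: subadditivity of $f$ and $g$ makes the two deficits $\sum_i f(|x_i|)-f(\sum_i|x_i|)$ and $\sum_i g(|x_i|)-g(\sum_i|x_i|)$ nonnegative, equality for $\alpha f+\beta g$ forces each weighted deficit to vanish, and then condition (b) of whichever constituent carries a positive coefficient finishes the argument --- concavity is never used. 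What each approach buys: the paper's is shorter given that Theorem~\ref{thm:sparsifying} is already available, but it silently requires $f$ and $g$ to satisfy the hypotheses of that theorem rather than merely the definition, and it overlooks that the zero combination ($\alpha=\beta=0$) is concave but not strictly concave and not sparsifying. Your argument is more economical in hypotheses (it shows ``sparsifying and subadditive'' is closed under nondegenerate conical combination) and, more importantly, your remark that the corollary is literally \emph{false} under the bare Definition~\ref{def:42} is a genuine observation the paper misses: $f(x)=x^2$ satisfies both (a) and (b) of the definition yet is superadditive, and adding it to a concave sparsifying function such as $\sqrt{|x|}$ produces a function whose subadditivity deficits cancel at a configuration with two equal nonzero entries, violating (b). So both proofs implicitly strengthen the hypothesis in the same way, but yours makes the strengthening, and the need for it, explicit.
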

\begin{proof}
As strict concavity is preserved by conical combination, we only need
prove subadditivity is preserved by conical combination, which holds because:
\begin{align*}
(\alpha f + \beta g)(x + y) &= \alpha f(x + y) + \beta g(x + y) \\
&\le \alpha f(x) + \alpha f(y) + \beta g(x) + \beta g(y) \\
&= (\alpha f +
\beta g)(x) + (\alpha f + \beta g)(y).
\end{align*}
\end{proof}

It can be directly checked that the following functions are sparsifying.

\begin{example}
 Following functions are sparsifying:
 \begin{enumerate}
\item[\emph{(1)}] Power function: $f(x)=|x|^p$ for $0<p<1$;
\item[\emph{(2)}] Capped power function:  $f(x)=\min(|x|^p,1)$ for $0<p<1$;
\item[\emph{(3)}]  $f(x)=-|x|^p$ for  $p>1$;
\item[\emph{(4)}] $f(x)= \log(1+|x|)$;
\item[\emph{(5)}] Shannon Entropy: $f(x)=-|x| \log |x|$ when $0\leq x$;
\item[\emph{(6)}] Squared entropy: $f(x)=-x^2 \log x^2$ when $0\leq x$;
\item[\emph{(7)}] $f(x)=a-(a+|x|^p)^{\frac1p}$ for $p>1$ and  $a\geq  0$;
\item[\emph{(8)}] $f(x)=-a+(a+|x|^p)^{\frac1p}$ for $p<1$ and  $a\geq  0$;
\end{enumerate}
\end{example}

\begin{remark}
We note that $\log|x|$ is not subadditive because $f(0)=-\infty<0$.
Although $f(x)=|x|^p$ for $p<0$ is subadditive, $|x|^{p}$ is not
concave. Thus, these two functions are not sparsifying.
\end{remark}

\subsection{Unit Matrix Groups}

\begin{definition}[unit group] \label{def:unit-group}
A matrix group $\GM$ is a \emph{unit group} if $|\det(\G)|=1,\, \forall \G \in
\GM$.
\end{definition}

Clearly,  unitary, orthogonal, and unit-triangular matrix groups are unit
groups.
We now present some properties of the unit groups.

\begin{lemma}\label{lem:unit-group} Unit group has the following properties.
\begin{enumerate}
\item[\emph{(i)}] Unit group is well-defined, i.e., closed under multiplication
and inverse, and has an identity element which happens to be $\I$.
\item[\emph{(ii)}] The Kronecker product of unit groups is also a unit group. In particular, if
$\GM_1$ and $\GM_2$ are unit groups, then $\GM_1\otimes \GM_2 =\{\M_1\otimes \M_2: \M_1 \in
\GM_1 \mbox{  }  \M_2\in \GM_2 \}$ is also a unit group.
\item[\emph{(iii)}]  $\{\BP\otimes \BP^{-\top}: \BP \in \GLG(n)\}$ is a unit
group.
\item[\emph{(iv)}] $\{\A\otimes \conj{\A}: \A \in \GLG(n)\}$ is a
group, and is a unit group iff $\GLG(n)$ is a unit group.
\item[\emph{(v)}] $\{\I_n\otimes \A, \A\in\GM\}$ is a unit group iff $\GM$ is
a unit group. $\{\A\otimes\I_n, \A\in\GM\}$ is a unit group iff $\GM$ is
a unit group.
\end{enumerate}
\end{lemma}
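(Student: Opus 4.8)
The plan is to handle all five parts with a single toolkit: the multiplicativity of the determinant, the Kronecker determinant identity $\det(\A\otimes\B)=(\det\A)^{q}(\det\B)^{p}$ for $\A\in\FB^{p\times p}$ and $\B\in\FB^{q\times q}$, and the mixed-product rule $(\A\otimes\B)(\C\otimes\D)=(\A\C)\otimes(\B\D)$. A recurring observation is that each of the sets in (ii)--(v) is the image of a group homomorphism out of a known matrix group, so the group axioms come for free and only the determinant condition needs checking.

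For (i) I would verify directly that the defining condition $|\det(\G)|=1$ is stable under the group operations: multiplicativity gives $|\det(\G_1\G_2)|=|\det(\G_1)|\,|\det(\G_2)|=1$ for closure, and $|\det(\G^{-1})|=|\det(\G)|^{-1}=1$ for inverses. Since $\GM$ is assumed to be a matrix group under ordinary multiplication, its identity element $\E$ must satisfy $\E\G=\G$ for invertible $\G$, forcing $\E=\I$, and $|\det(\I)|=1$ is consistent. For (ii), the mixed-product rule shows $(\M_1\otimes\M_2)(\N_1\otimes\N_2)=(\M_1\N_1)\otimes(\M_2\N_2)$ and $(\M_1\otimes\M_2)^{-1}=\M_1^{-1}\otimes\M_2^{-1}$, so $\GM_1\otimes\GM_2$ is closed under products and inverses with identity $\I\otimes\I=\I$; the Kronecker determinant identity then yields $|\det(\M_1\otimes\M_2)|=|\det(\M_1)|^{n_2}|\det(\M_2)|^{n_1}=1$.

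Parts (iii)--(v) follow the same two-step pattern. In (iii) I would first check that $\BP\mapsto\BP\otimes\BP^{-\top}$ is a homomorphism: using $\BP^{-\top}\Q^{-\top}=(\Q\BP)^{-\top}=(\BP\Q)^{-\top}$ together with the mixed-product rule gives $(\BP\otimes\BP^{-\top})(\Q\otimes\Q^{-\top})=(\BP\Q)\otimes(\BP\Q)^{-\top}$, so the image is a group; the determinant is $|\det(\BP)|^{n}|\det(\BP^{-\top})|^{n}=|\det(\BP)|^{n}|\det(\BP)|^{-n}=1$. For (iv), $\A\mapsto\A\otimes\conj{\A}$ is a homomorphism because $\conj{\A}\,\conj{\B}=\conj{(\A\B)}$, so again the image is a group; here $|\det(\A\otimes\conj{\A})|=|\det(\A)|^{n}\,|\det(\conj{\A})|^{n}=|\det(\A)|^{2n}$ using $|\det(\conj{\A})|=|\det(\A)|$, which equals $1$ for all $\A$ precisely when every $\A$ has $|\det(\A)|=1$, giving the stated ``iff''. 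Part (v) is the simplest: $\A\mapsto\I_n\otimes\A$ and $\A\mapsto\A\otimes\I_n$ are homomorphisms, and $|\det(\I_n\otimes\A)|=|\det(\A\otimes\I_n)|=|\det(\A)|^{n}$, so each is a unit group exactly when $\GM$ is.

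None of the steps presents a genuine obstacle; the argument is essentially bookkeeping with determinants and Kronecker products. The only points demanding care are the algebraic identities that secure closure --- in particular the transpose-inverse manipulation $\BP^{-\top}\Q^{-\top}=(\BP\Q)^{-\top}$ in (iii) and the conjugation identity $\conj{\A}\,\conj{\B}=\conj{(\A\B)}$ in (iv) --- and keeping the exponents in the Kronecker determinant identity straight, so that the ``iff'' directions in (iv) and (v) emerge correctly.
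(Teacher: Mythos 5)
Your proof is correct and follows essentially the same route as the paper's: multiplicativity of the determinant, the mixed-product rule, and the Kronecker determinant identity $|\det(\A\otimes\B)|=|\det(\A)|^{q}|\det(\B)|^{p}$, applied part by part. Your packaging of (ii)--(v) as images of group homomorphisms is a mild streamlining of the paper's explicit closure-and-inverse checks, and in (v) it is slightly more complete: the paper deduces (v) from (ii) via the unit group $\{\I_n\}$, which cleanly gives only the ``if'' direction, whereas your computation $|\det(\I_n\otimes\A)|=|\det(\A)|^{n}$ yields both directions of the ``iff'' at once. One slip to repair in (iii): in the chain $\BP^{-\top}\Q^{-\top}=(\Q\BP)^{-\top}=(\BP\Q)^{-\top}$ the middle term is wrong, since $(\Q\BP)^{-\top}=\Q^{-\top}\BP^{-\top}$ in general; the correct one-line derivation is $\BP^{-\top}\Q^{-\top}=(\Q^{-1}\BP^{-1})^{\top}=\bigl((\BP\Q)^{-1}\bigr)^{\top}=(\BP\Q)^{-\top}$, which is exactly the identity the paper uses implicitly, so nothing downstream of this step breaks.
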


\begin{proof}
\begin{enumerate}
  \item [{(i)}]  Let
$\G_1,\G_2\in\GM$. Then $|\det(\G_1^{-1})| = 1$ and \[ |\det( \G_1 \G_2 )| = |\det( \G_1
)| |\det(\G_2)| = 1.
\]Hence $\G_1^{-1}\in\GM$ and $\G_1\G_2\in\GM$.
  \item [{(ii)}] We first check $\GM_1\otimes \GM_2$ is a group. This
  can be done by noting that  $(\G_1\otimes\G_2)^{-1} = \G_1^{-1}\otimes\G_2^{-1}\in\GM_1\otimes \GM_2$
  when $\G_1\in\GM_1$, $\G_2\in\GM_2$; and \[(\G_1\otimes\G_2)
  (\G_3\otimes\G_4) = (\G_1\G_3)\otimes(\G_2\G_4)\text{.}\] Also $\I\in
  \GM_1\otimes \GM2$. Moreover, since
  $|\det(\G_1\otimes \G_2)| = |\det(\G_1)|^m |\det(\G_2)|^n = 1$ for any $\G_1
  \in \GM_1$ and $\G_2 \in \GM_2$, $\GM_1\otimes \GM_2$ is a unit group.
  \item [{(iii)}] Closedness under multiplication and inverse can be
  proved by noting
\[(\BP\otimes \BP^{-\top})(\Q\otimes \Q^{-\top})=(\BP \Q)\otimes (\BP^{-\top}
\Q^{-\top}) =(\BP \Q)\otimes (\BP \Q)^{-\top}\text{.}\] Also we have \[(\BP
\otimes \BP^{-\top})^{-1}=\BP^{-1}\otimes \BP^{\top}\text{.}\]  Thus $\BP
\otimes \BP^{-\top}$ forms a group with $\I$ as the identity. It is also a unit
group as $|\det(\BP\otimes \BP^{-\top})|=|\det(\BP)^n \det(\BP^{-\top})^n|=1$.
 \item[{(iv)}] Closedness under multiplication and inverse can be
  proved based on \[(\L\otimes \conj{\L})(\R\otimes \conj{\R})=(\L \R) \otimes
 (\conj{\L} \conj{\R})=(\L \R) \otimes \conj{\L \R}\text{,}\] and $(\L\otimes
 \conj{\L})^{-1}=\L^{-1} \otimes \conj{(\L^{-1})}$. Thus $\L\otimes \conj{\L}$
 forms a group with $\I$ as the identity.
 Moreover $|\det(\L\otimes \conj{\L})|=|\det(\L)^n
 \det(\conj{\L})^n|=|\det(\L)|^{2n}$, i.e., $\L\otimes \conj{\L}$ forms a unit group iff $\L$ is from a unit group.
\item[{(v)}] Note $\{\I_n\}$ is a unit group with single element. By property
{(ii)} we can prove this property.
\end{enumerate}
\end{proof}

It is worth pointing out that $\BP\otimes \BP^{-1}$ does not form a group in
general because $(\BP\otimes \BP^{-1})(\Q\otimes \Q^{-1}) = (\BP\Q)\otimes
(\Q\BP)^{-1} \not\equiv (\BP\Q)\otimes
(\BP\Q)^{-1}$.

Finally, in Table~\ref{tab:matrix-decomp} we list matrix decompositions of $\X$ used in
this paper. When referring to the Cholesky decomposition, $\X$ should be positive definite. 

\begin{table}[!ht] \centering \small
\caption{Matrix decompositions}
\begin{center}
\begin{tabular}{p{2cm} p{2.5cm} p{6cm}}
    \toprule
     Name & Decomposition & Constraint \\
    \midrule real SVD & $\X = \U \D \V^\top$ & $\U, \V\in \OG(n)$, $\D$ is
    diagonal\\
    \midrule complex SVD &  $\X = \U \D \V^*$ & $\U,\V \in \UG(n)$, $\D$ is
    diagonal\\
    \midrule QR & $\X = \Q \D \R$ & $\Q\in\UG(n), \R\in\UUTG(n)$, $\D$ is
    diagonal\\
    \midrule LU & $\X = \L \D \U$ & $\L\in\LUTG(n), \U\in\UUTG(n)$, $\D$ is
    diagonal \\
    \midrule Cholesky & $\X = \L \D \L^\top$ & $\L\in\LUTG(n)$,
    $\D$ is diagonal \\
    \midrule Schur & $\X = \Q \U\Q^*$ & $\Q\in\UG(n)$, $\U$ is upper triangular
    \\
\hline
  \end{tabular}
\end{center} \label{tab:matrix-decomp}
\end{table}

\section{Group Orbit Optimization}
\label{sec:main}

\subsection{Matrix Decomposition Induced from Group Orbit Optimization}
\label{subsec:induced-matrix-decomp}
\subsubsection{GOO formulation}
We now illustrate how matrix decomposition can be induced from GOO.
Given two groups $\GM1, \GM2$ and a data matrix
$\M$, we consider the following optimization problem
\begin{align}
\inf_{\G_1 \in \GM_1, \G_2 \in \GM_2} \; \phi(\G_2 \M\G_1^\top).
\label{eq:matrix-induce}
\end{align}

Assume that $\hat{\G_1}$ and $\hat{\G_2}$ are minimizers of the above
GOO and $\D = \hat{\G}_2 \M\hat{\G}_1^\top$,
then we refer to
\[\M = \hat{\G}_2^{-1} \D \hat{\G}_1^{-\top}\text{,}\] 
as a matrix decomposition of $\M$ which is
induced from Formula~(\ref{eq:matrix-induce}).

When $\phi = \varphi \circ \vect$, an equivalent formulation of
Formula~(\ref{eq:matrix-induce}) is:
\[
\inf_{\G_1 \in \GM_1, \G_2 \in \GM_2} \; \phi(\G_2
\M\G_1^\top)\equiv\inf_{\G\in\GM} \varphi(\G \vect(\M))\text{,}
\]
where $\G=\G_1\otimes \G_2\in\GM$ and $\GM\eqdef\GM_1\otimes\GM_2$.

\subsubsection{GOO over unit group}

For a general matrix group $\GM$, $\G\in\GM$ implies that $|\det(\G)| > 0$.
However, group structure may not be sufficient to induce non-trivial matrix
decomposition, as with some groups and cost functions the infimum will be
trivially zero. For example, with general linear group $\GLG$ and for any matrix
$\M$, we have
\[
\inf_{\G\in\GLG} \|\G \M\|_p = 0,
\]
because $s\I \in \GLG$ and
\[\lim_{s\to 0} \inf_{s\in\RB} \|s \I \M\|_p = \lim_{s\to0}s\|\M\|_p =0\;.
\]
Nevertheless, if we require $\GM$ to be a unit group, we have $|\det(\G)| = 1$.
Consequently, we can prevent the infimum from vanishing trivially for any
$\ell_p$-norm.
Thus, we mainly consider the case where $\GM$ is a unit group in this paper.

The following theorem shows that many matrix decompositions can be induced from the group orbit optimization.
\begin{theorem} 
\label{theorem:decompostion-as-optimization}
SVD, LU, QR, Schur and
Cholesky decompositions of matrix $\M \in
\mathbb{F}^{m\times n}$ can be induced from  GOO of the form
\[\inf_{\G_1 \in \GM_1, \G_2 \in \GM_2} \; \phi(\G_2 \M\G_1^\top)\text{,}
\]
by using the corresponding unit group $\G$ and
cost function $\phi$, which are given in Table~\ref{tab:group}.
\end{theorem}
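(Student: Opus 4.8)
The plan is to prove the six cases one by one, but to run each through a common template. For a given row of Table~\ref{tab:group} I would first read off the pair $(\GM_1,\GM_2)$ and verify it is a unit group: orthogonal, unitary and unit-triangular groups all satisfy $|\det|=1$, and the coupled actions needed for the one-sided decompositions are covered by the Kronecker constructions of Lemma~\ref{lem:unit-group}. For Schur, where $\G_2\M\G_1^\top$ must be a unitary similarity $\Q^*\M\Q$, the vectorized action $\Q^\top\otimes\conj{(\Q^\top)}\,\vect(\M)$ is exactly the group $\{\A\otimes\conj{\A}\}$ of Lemma~\ref{lem:unit-group}(iv) with $\A=\Q^\top$. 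I would then install the cost $\phi$ prescribed in Table~\ref{tab:group}: a sparsifying $\phi=\sum_{ij}f(|\cdot_{ij}|)$ for the five decompositions whose central factor is diagonal, and, following the remark preceding the theorem, a cost penalizing only the strictly sub-diagonal entries for Schur.

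In each case the argument splits into a matching upper and lower bound. The upper bound is immediate: the classical decomposition (with $\M$ positive definite in the Cholesky case) exists, and substituting its factors makes $\G_2\M\G_1^\top$ equal to the tabulated diagonal (respectively upper-triangular) factor, so the infimum is at most $\sum_i f(|d_{ii}|)$. All the work is in the lower bound---showing no admissible $(\G_1,\G_2)$ beats this value and that equality forces the canonical sparsity pattern. The unit-group property is what keeps this infimum away from the trivial $0$ seen for $\GLG$ before the theorem, since it pins down $|\det(\G_2\M\G_1^\top)|=|\det\M|$ throughout the orbit.

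For the lower bound I expect two distinct mechanisms. When the relevant factor is orthogonal or unitary (both SVD cases, and the unitary factors of QR and Schur) the orbit preserves the singular values, hence the Frobenius norm, so concentration onto the diagonal is governed by subadditivity and strict concavity: Lemma~\ref{lem:41} bounds each row/column sum below by $f$ of its norm, and the strict-concavity scaling of Lemma~\ref{lem:43} together with the defining property of a sparsifying function (Definition~\ref{def:42}) forces off-diagonal entries to vanish at optimality. When the relevant factor is unit-triangular (LU, Cholesky, and the triangular factors of QR) only the determinant is conserved, and I would instead drive the mass onto the diagonal through the AM--GM bound of Lemma~\ref{lem:42}, whose product $\prod_i|a_{ii}|$ is tied to $|\det\M|$ for triangular targets. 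In both regimes, equality yields a matrix whose relevant one-sided products are diagonal, i.e.\ a pseudo-diagonal matrix, which then reduces to a genuine diagonal one, recovering the decompositions of Table~\ref{tab:matrix-decomp}.

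The hardest step will be the SVD lower bound itself, where the naive per-row/per-column concentration fails because a single left factor $\G_2$ and a single right factor $\G_1$ must serve all rows and columns simultaneously, so the bound cannot be applied coordinatewise. I would attack it by first-order optimality on the (compact, hence infimum-attaining) orthogonal or unitary orbit: differentiating $\sum_{ij}f(|a_{ij}|)$ along the tangent directions $\A\mapsto X\A-\A Y$ with $X,Y$ skew, the stationarity condition makes the gradient matrix $[\,f'(|a_{ij}|)\sgn(a_{ij})\,]$ symmetrize $\A$ in a way that forces $\A\A^*$ and $\A^*\A$ to be diagonal, hence $\A$ pseudo-diagonal; strict concavity then excludes non-diagonal critical values and pins the global minimum at the singular-value matrix, unique up to a signed permutation. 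A secondary obstacle, which I would treat separately for LU, Cholesky, QR and Schur, is attainment of the infimum over the non-compact triangular groups: lacking compactness, I would establish coercivity, showing $\phi$ is eventually bounded below by its value at the classical factorization as the group entries diverge, so that a finite minimizer exists and the lower-bound analysis identifies it with the claimed decomposition.
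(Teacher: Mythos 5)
Your proposal misses the single idea that the paper's proof turns on: the coset reduction of Lemma~\ref{lem:criteria-infimum}. Because $\GM$ is a group and the classical factorization supplies some $\A\in\GM$ with $\M=\A\D$, one has $\{\G\A:\G\in\GM\}=\GM$, hence $\inf_{\G\in\GM}\phi(\G\M)=\inf_{\G\in\GM}\phi(\G\D)$; so the only lower bound ever needed is $\phi(\G\D)\geq\phi(\D)$ on the orbit of the canonical form $\D$ itself, never on the orbit of an arbitrary $\M$. This dissolves exactly the two obstacles you single out as hardest. For SVD, the step you claim fails --- per-column concentration --- in fact succeeds once the base point is $\D$: writing $\A=\U\D\V^*$, Lemma~\ref{lem:41} gives $\phi(\A)\geq\tr[g(\A^*\A)]=\tr[g(\V\D^*\D\V^*)]$, and Jensen's inequality for the concave $g$ under unitary conjugation gives $\tr[g(\V\D^*\D\V^*)]\geq\tr[\V\,g(\D^*\D)\,\V^*]=\phi(\D)$; this is Lemma~\ref{lem:svd-cost}. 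Your substitute --- first-order optimality along tangent directions of the compact orbit --- has a real gap: the admissible sparsifying functions (e.g.\ $f(x)=|x|^p$, $0<p<1$) are not differentiable at $0$, which is precisely where the optimal matrix has most of its entries, so the stationarity condition you want to exploit does not exist there; and even for smooth $f$, classifying all critical points and comparing their values is far more than ``strict concavity excludes non-diagonal critical values.''

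Your handling of the non-compact groups is also misdirected, in two ways. First, by Table~\ref{tab:group} the LU, Cholesky and Schur rows do not use a sparsifying cost on all entries but the masked cost $\phi(\X)=\sum_{ij}f(|x_{ij}{\mathbb I}_{\{i<j\}}|)$, which is nonnegative everywhere and vanishes on the target triangular form; the lower bound $\phi(\G\U)\geq 0=\phi(\U)$ is therefore trivial, and the minimizer is exhibited explicitly by the classical factorization through Lemma~\ref{lem:criteria-infimum}, so no coercivity or attainment argument is needed anywhere. Your plan to drive LU and Cholesky through the AM--GM bound of Lemma~\ref{lem:42} attacks a different (and harder) problem than the one the theorem poses; in the paper that lemma is the tool for matrix equivalence over $\SLG$, while QR is handled not by AM--GM but by monotonicity of $g$, via $\tr[g(\R^*\D^*\D\R)]\geq\tr[g(\D^*\D)]$ in Lemma~\ref{lem:qr-cost}. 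Your unit-group verification and the upper bound via the classical decompositions are fine, but without the coset reduction the core of your lower-bound program would not go through as written.
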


Clearly, the matrix groups in Table~\ref{tab:group} are unit groups by
Lemma~\ref{lem:unit-group}.
We will prove the rest of theorem in Section~\ref{subsec:diag-goo} and
Section~\ref{subsec:triang-goo}.

\begin{remark}
The cost function
for SVD, QR and Matrix Equivalence can be $\phi(\X) = \|\X\|_p, \, 1\le p <2$. And
the cost function for LU, Schur and Cholesky can be $\phi(\X) =
\sum_{ij}\|x_{ij} {\mathbb I}_{\{i<j\}}\|_p, \, 1\le p <2$.
\end{remark}

\begin{table}[!ht] \centering \small
\caption{Matrix decompositions induced from optimizations}
\begin{center}
\begin{tabular}{p{3.8cm} p{3.8cm} p{4cm}}
	\toprule
     Decomposition & Unit group $\GM=\GM_1\otimes\GM_2$ & Objective function
     $\phi(\X)$
     \\
	\midrule real SVD: $\U \D \V^\top$ & $\{\V\otimes \U: \U, \V\in \OG(n)\}$
    & $\sum_{ij} f(|x_{ij}|)$ where $f(\sqrt{x})$ is strictly
    concave, $f(0)\ge0$\\
    \midrule complex SVD:  $\U \D \V^*$ & $\{\conj{\V}\otimes \U: \U,\V
    \in \UG(n)\}$ & $\sum_{ij} f(|x_{ij}|)$ where $f(\sqrt{x})$ is strictly
    concave, $f(0)\ge0$\\
    \midrule QR: $\Q \D \R$ & $\{\R^\top\otimes \Q: \Q\in\UG(n),
    \R\in\UUTG(n)\}$ & $\sum_{ij} f(|x_{ij}|)$ where $f(\sqrt{x})$ is strictly
    concave and increasing, $f(0)\ge0$\\
    \midrule Matrix Equivalence: $\BP \D \Q$ & $\{\Q^\top\otimes \BP:
    \Q, \BP\in\SLG(n)\}$ & $\sum_{ij} f(|x_{ij}|)$ where
    $f(0)\ge0$, $f(\sqrt{x})$ is strictly concave and increasing;
    $f(\sqrt{e^x})$ is convex\\
    \midrule LU: $\L \D \U$ & $\{\I\otimes \L:
    \L\in\LUTG(n)\}$ & $\sum_{ij} f(|x_{ij} {\mathbb I}_{\{i<j\}} |)$ where
    $f(x)\not\equiv 0$, $f(x)=0\Rightarrow x=0$, $f(x)\ge 0$\\
    \midrule Cholesky: $\L \D \L^\top$ & $\{\I\otimes \L:
    \L\in\LUTG(n)\}$ & $\sum_{ij} f(|x_{ij} {\mathbb I}_{\{i<j\}} |)$ where
    $f(x)\not\equiv 0$, $f(x)=0\Rightarrow x=0$, $f(x)\ge 0$\\
    \midrule Schur: $\Q \U\Q^*$ & $\{\conj\Q\otimes \Q:
    \Q\in\UG(n)\}$ & $\sum_{ij} f(|x_{ij} {\mathbb I}_{\{i<j\}} |)$ where
    $f(x)\not\equiv 0$, $f(x)=0\Rightarrow x=0$, $f(x)\ge 0$\\
  \end{tabular}
\end{center} \label{tab:group}
\end{table}

\begin{remark}
The formulation of QR decomposition exploits
the fact that $\M=\Q \R$ is
equivalent to $\M=\Q(\D \tilde{\R})$ where $\Q\in\UG(n)$, $\R$ is
upper-triangular, $\tilde{\R}\in\UUTG(n)$, and $\D$ is diagonal.
\end{remark}

\begin{remark}
``Matrix Equivalence" in Table~\ref{tab:group} finds
a diagonal matrix equivalent to an invertible matrix $\M$ as defined in
Section~\ref{subsub:equiv}.
\end{remark}

\begin{remark}
However, there are matrix decompositions whose formulation cannot be
expressed as GOO in the same way as Table~\ref{tab:group}. For example,  Polar
decomposition $\M=\U \L \D \L^*$ where $\U\in\UG(n)$ and $\L\in\LUTG(n)$, though
derivable from SVD, cannot be induced from a GOO formulation of
diagonalization.
This is because $\conj{\L}\otimes \U \L$ does not form a group as it is not
closed under multiplication.
For another example, consider a formulation of decomposition $\M =\L \D
\L^{-\top}$ where $\L\in\LUTG(n)$ and $\D$ is diagonal. As we stated earlier, $\L\otimes
\L^{-1}$ is not a group in general, so  $\S =\L \D \L^{-\top}$ cannot be induced
from a GOO formulation of diagonalization.
\end{remark}

\begin{remark}
For matrix decomposition of the form $\M =\A \D \B^\top$, where  $\A \in \mathbb{F}^{m\times r}$ and $\B \in
\mathbb{F}^{n\times r}$ with  $r\leq \min(m, n)$.
In this case, we can zero-pad $\D$ to $\tilde{\D}\in\mathbb{F}^{m\times n}$, and
extend $\A$ and $\B$ to $\tilde{\A}\in\mathbb{F}^{m\times m}$ and
$\tilde{\B}\in\mathbb{F}^{n\times n}$ which are square matrices. Accordingly, we formulate a decomposition $\M =\tilde{\A}\tilde{\D}\tilde{\B}^\top$ which may be induced from GOO.
\end{remark}

We next prove a lemma that characterizes the optimum.
\begin{lemma}[Criteria for infimum] \label{lem:criteria-infimum}
If $\phi(\G \D) \geq \phi(\D)$ for any $\G\in\GM$ and there exists $\A \in
\GM$ s.t.\ $\M= \A \D$, then
\[
\inf_{\G\in\GM} \phi(\G \M) = \phi(\D).
\]
\end{lemma}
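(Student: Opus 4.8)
The plan is to prove the two inequalities
\[
\inf_{\G\in\GM} \phi(\G \M) \le \phi(\D)
\qquad\text{and}\qquad
\inf_{\G\in\GM} \phi(\G \M) \ge \phi(\D),
\]
and then combine them. The guiding observation is that the two hypotheses say, respectively, that $\D$ minimizes $\phi$ over its own orbit $\GM\D$, and that $\M$ lies in that same orbit; hence the orbit $\GM\M$ coincides with $\GM\D$ and the infimum of $\phi$ over it is attained at $\D$. The only structural fact I need beyond the hypotheses is that $\GM$ is a genuine group, so by Lemma~\ref{lem:unit-group}(i) it is closed under multiplication and inverse.

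For the upper bound, I would use the second hypothesis: from $\M=\A\D$ with $\A\in\GM$ I get $\D=\A^{-1}\M$, and since $\A^{-1}\in\GM$ the element $\D$ is of the form $\G\M$ with $\G=\A^{-1}$. Therefore $\D$ is a feasible point of the infimum, giving
\[
\inf_{\G\in\GM}\phi(\G\M)\le \phi(\A^{-1}\M)=\phi(\D).
\]

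For the lower bound, I would take an arbitrary $\G\in\GM$ and rewrite $\G\M=\G\A\D=(\G\A)\D$. Because $\GM$ is closed under multiplication, $\G\A\in\GM$, so the first hypothesis applies with the group element $\G\A$ in place of $\G$, yielding
\[
\phi(\G\M)=\phi\bigl((\G\A)\D\bigr)\ge \phi(\D).
\]
Since this holds for every $\G\in\GM$, taking the infimum over $\G$ preserves the inequality and gives $\inf_{\G\in\GM}\phi(\G\M)\ge\phi(\D)$.

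There is no real analytic obstacle here; the argument is purely the orbit-invariance bookkeeping. The one step that must not be skipped is the appeal to the group axioms: both directions rely on $\GM$ being closed under inverse (to produce $\A^{-1}$ for the upper bound) and under multiplication (to form $\G\A$ for the lower bound), and without those closure properties neither reduction to the first hypothesis would be licensed. Combining the two displayed inequalities then yields $\inf_{\G\in\GM}\phi(\G\M)=\phi(\D)$, completing the proof.
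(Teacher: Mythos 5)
Your proof is correct and is essentially the same as the paper's: the paper changes variables in the infimum via the coset identity $\{\G\A:\G\in\GM\}=\GM$ and then bounds $\inf_{\G\in\GM}\phi(\G\D)$ from both sides, which is exactly your two inequalities (feasibility of $\D=\A^{-1}\M$ for the upper bound, closure under multiplication for the lower bound) written in unpacked form.
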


\begin{proof}
We note that $\inf_{\G \in \GM} \phi(\G \M) = \inf_{\G \in \GM} \phi(\G \A
\D)$. By the group structure, the coset $\{\G \A: \G\in\GM\} =
\GM$. Hence we have
\[
\inf_{\G\in\GM} \phi(\G \M) = \inf_{\G \in \GM} \phi( \G \A \D) =
\inf_{\G\in\GM} \phi(\G \D).
\]
Using the condition $\forall \G\in\GM, \phi(\G\D) \geq \phi(\D)$, we
have \[ \inf_{\G\in\GM} \phi(\G \D) \geq
\inf_{\G\in\GM} \phi(\D)=\phi(\D).
\]
On the other hand, as $\I\in\GM$ we have $\phi(\D)=\phi(\I\D)\geq
\inf_{\G\in\GM} \phi(\G \D)$.
Hence \[\phi(\D) = \inf_{\G\in\GM} \phi(\G \D)= \inf_{\G\in\GM} \phi(\G
\M)\text{.}\]
\end{proof}

By virtue of Lemma~\ref{lem:criteria-infimum}, if we want to prove that matrix decomposition
$\vect{(\M)} =\tilde{\G}\vect{(\D)}$ is induced by a GOO \wrt $\phi$ and $\GM$, we only need prove
that there exists a $\tilde\G\in\GM$ s.t.\ $\vect{(\M)} =\tilde{\G}\vect{(\D)}$, and
$\phi(\G \vect(\D)) \geq \phi(\vect(\D))$ $\forall \G\in\GM$.
The equality condition  will determine the uniqueness of the
optimum of the optimization problem.

\subsection{Matrix Diagonalization as GOO}\label{subsec:diag-goo}

Next we demonstrate how matrix diagonalization can be induced from GOO with
proper choice of cost function and unit group.
\subsubsection{Singular Value Decomposition}

First we discuss   SVD of a complex matrix and of a real matrix.
\begin{lemma}[Cost function and group for SVD]\label{lem:svd-cost} Let
$\D=[d_{ij}]$ be pseudo-diagonal, and  $\U,\V \in\UG(n)$.
Given a function $f$ such that $f(x) = f(|x|)$ and $f(\sqrt{|x|})$ is
strictly concave and subadditive, and $\phi(\X) = \sum_{ij} f(x_{ij})$ we have \[
\phi(\U\D\V^* )\geq \phi(\D), \] 
with equality  iff there exists a row
permutation matrix $\BP$ such that $|\U\D\V^*| = |\BP\D|$.

Furthermore, if $\U, \V \in\OG(n)$, we have
\begin{align}\label{ineq:svd}
\phi(\U\D\V^\top)\geq
\phi(\D),
\end{align}
with equality  iff there exists a row permutation
matrix $\BP$ such that $|\U\D\V^\top| = |\BP\D|$.
\end{lemma}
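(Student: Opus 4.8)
The plan is to move from entries to \emph{squared} magnitudes and then invoke a majorization inequality. Set $g(t)\eqdef f(\sqrt{t})$ for $t\ge 0$; by hypothesis $g$ is strictly concave, and Lemma~\ref{lem:41} gives $g(0)=f(0)\ge 0$. Since $f(x)=f(|x|)$ we may rewrite, for every square matrix $\X$, $\phi(\X)=\sum_{ij}f(|x_{ij}|)=\sum_{ij}g(|x_{ij}|^2)$. Let $s_1\ge\cdots\ge s_n\ge 0$ be the singular values of $\D$ and put $\Y\eqdef\U\D\V^{*}$. Because $\U,\V\in\UG(n)$ the matrix $\Y$ has exactly the same singular values as $\D$, and because $\D$ is pseudo-diagonal the matrix $\D^{*}\D$ is diagonal, so the multiset of entry-magnitudes of $\D$ is $\{s_1,\dots,s_n\}$ together with $n^2-n$ zeros. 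Consequently $\phi(\D)=\sum_{k}g(s_k^2)$ equals $\sum_{m}g(z_m)$, where $z=(s_1^2,\dots,s_n^2,0,\dots,0)\in\RB^{n^2}$.

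First I would prove the majorization $\vect(|\Y|\odot|\Y|)\prec z$ in $\RB^{n^2}$: for every $m$, the sum of the $m$ largest values among $\{|y_{ij}|^2\}$ is at most $s_1^2+\cdots+s_m^2$, with equality at $m=n^2$. The total-sum equality is just $\|\Y\|_F^2=\sum_k s_k^2$. For a partial sum, take any $m$ entries; if they occupy $a$ rows and $b$ columns they lie inside the $a\times b$ submatrix those rows and columns span, so their squared sum is bounded by the squared Frobenius norm of the submatrix, i.e.\ by the sum of its squared singular values. Deleting rows or columns cannot increase singular values, so the $l$-th singular value of the submatrix is at most $s_l$; since $\min(a,b)\le m$ and all $s_l^2\ge 0$, this bound is at most $s_1^2+\cdots+s_m^2$. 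This establishes the required majorization.

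Applying the Schur-concavity of $x\mapsto\sum_m g(x_m)$ (which holds because $g$ is concave) to $\vect(|\Y|\odot|\Y|)\prec z$ immediately gives $\phi(\Y)\ge\sum_m g(z_m)=\phi(\D)$, which is inequality~(\ref{ineq:svd}). For the equality clause I would use that, for \emph{strictly} concave $g$, equality in this Schur-concave inequality forces the majorized vector to be a permutation of $z$; hence equality holds iff the multiset $\{|y_{ij}|^2\}$ equals $\{s_1^2,\dots,s_n^2\}$ together with $n^2-n$ zeros. In that case $\Y$ has exactly as many nonzero entries as $\D$ has positive singular values, with matching magnitudes; since $\Y$ must retain that rank and the rank is bounded by the term rank (the maximum matching of the support), the nonzero entries cannot share a row or a column, so $\Y$ is pseudo-diagonal. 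The structural lemma for pseudo-diagonal matrices then supplies a row permutation $\BP$ with $|\Y|=|\BP\D|$; the converse is immediate because $\phi$ depends only on entry magnitudes and $\BP\D$ merely permutes the rows of $\D$.

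The real case $\U,\V\in\OG(n)$ needs no new idea: real orthogonal matrices are unitary, $\U\D\V^{\top}=\U\D\V^{*}$, singular values are again preserved, and the same majorization and Schur-concavity yield the stated inequality with its equality condition. I expect the main obstacle to be the equality analysis: promoting ``the multiset of squared entries matches'' to the structural identity $|\Y|=|\BP\D|$ requires the term-rank argument to exclude two nonzeros in a common row or column, plus some care when singular values are repeated or zero. The partial-sum estimate underlying the majorization is the other delicate point, resting on the monotonicity of singular values under passage to submatrices.
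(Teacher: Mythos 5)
Your proof of the inequality itself is correct, but it takes a genuinely different route from the paper's. The paper works column-by-column: subadditivity of $g(x)=f(\sqrt{x})$ (its Lemma~\ref{lem:41}) gives $\phi(\A)\ge\tr[g(\A^*\A)]=\tr[g(\V\D^*\D\V^*)]$ for $\A=\U\D\V^*$, and then a trace--Jensen step for the unitary conjugation gives $\tr[g(\V\D^*\D\V^*)]\ge\tr[\V g(\D^*\D)\V^*]=\phi(\D)$. You instead establish the majorization $\vect(|\Y|\odot|\Y|)\prec(s_1^2,\dots,s_n^2,0,\dots,0)$ — your submatrix argument via monotonicity of singular values under row/column deletion is sound — and then invoke Schur-concavity of $x\mapsto\sum_m g(x_m)$. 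Both are valid. What your route buys: the equality analysis falls out in one stroke (for strictly concave $g$, equality forces the majorized vector to be a permutation of the majorizing one), and your term-rank argument promoting "matching multiset of magnitudes" to "$\Y$ is pseudo-diagonal" is cleaner and more airtight than the paper's appeal to the sparsifying equality condition applied to columns and, "by symmetry," to rows; your argument also never actually uses subadditivity, only strict concavity. What it costs: you lean on heavier standard machinery (submatrix singular-value monotonicity, Birkhoff/Hardy--Littlewood--P\'olya), whereas the paper is self-contained in its own utility lemmas.

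The one step that does not hold up is your last one: from "$\Y$ is pseudo-diagonal with the same nonzero magnitudes as $\D$" you conclude that a \emph{row} permutation $\BP$ exists with $|\Y|=|\BP\D|$. That is false in general. Take $\D=\diag(1,2)$, $\U=\I$, and $\V$ the transposition, so that
\[
|\U\D\V^*|=\begin{pmatrix}0&1\\2&0\end{pmatrix},
\qquad
|\BP\D|\in\left\{\begin{pmatrix}1&0\\0&2\end{pmatrix},\ \begin{pmatrix}0&2\\1&0\end{pmatrix}\right\};
\]
equality $\phi(\U\D\V^*)=\phi(\D)$ holds, yet no row permutation matches, because row permutations cannot move entries across columns. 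The paper's own structural lemma only says a pseudo-diagonal matrix becomes diagonal after a row permutation; two pseudo-diagonal matrices with identical nonzero multisets are related by a row \emph{and} a column permutation, $|\Y|=\BP|\D|\Q^\top$, and the column factor cannot be dropped. In fairness, the paper's proof makes exactly the same leap (its "by the uniqueness of singular values, $\Z=|\D|$" tacitly assumes $\D$ is sorted and still ignores the column permutation), so the "only if" half of the stated equality condition is defective in the statement itself, not merely in your attempt. Everything you prove up to and including pseudo-diagonality of $\Y$ is correct; the honest fix — for both you and the paper — is to state the equality case as: $\U\D\V^*$ is pseudo-diagonal with the same singular values as $\D$, equivalently $|\U\D\V^*|=\BP|\D|\Q^\top$ for some permutation matrices $\BP,\Q$.
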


\begin{proof}
First we prove the inequality.
We write  $g(x)=f(\sqrt{x})$ and $\A =
\U\D\V^*$. We  let $g(\X) = [g(x_{ij})]$ be a matrix-valued function of $\X=[x_{ij}]$. As $g$ is
concave and subadditive, by Lemma~\ref{lem:41} for a vector $\v=(v_1, \ldots,
v_n)^\top$, we have $\sum_{i=1}^n f(v_i) \ge f(\norm\v)= g(\v^*\v)$. Applying
this to each column  of $\A$, we have
\begin{align}\label{eq:subadditive}
\phi(\A)\geq \tr[ g (\A^*\A)] = \tr[g(\V\D^*\D\V^*)].
\end{align}
Alternatively, we can also apply the inequality to each row  of $\A$ and have
\begin{align}\label{eq:subadditive-2}
\phi(\A)\geq \tr[ g (\A \A^*)] = \tr[g(\U\D\D^*\U^*)].
\end{align}
As $\D$ is pseudo-diagonal, $\D^*\D$ is diagonal. Because $g$ is concave and $\V \V^*=\I$, we can apply Jensen's
inequality, obtaining \[
\tr( g(\V\D^*\D \V^*))\geq \tr (\V (g  (\D^*\D))\V^*).
\]
Hence altogether we have:
\[
\phi(\A)\ge\tr (g (\V\D^*\D \V^*))\geq \tr
(\V (g (\D^*\D))\V^*) = \tr (g(\D^*\D)) = \sum_{ij} f(d_{ij}) =\phi(\D).
\]

Next we check the equality condition.
By Theorem~\ref{thm:sparsifying}, $g$ is sparsifying. For the
equality condition in inequality~(\ref{eq:subadditive}) to hold, $\A$ can have
at most one nonzero in each column. By the symmetry between (\ref{eq:subadditive}) and
(\ref{eq:subadditive-2}), and noting $\phi(\A^\top) = \phi(\A)$ and $
\phi(\D) = \phi(\D^\top)$, $\A$  can also have at most one nonzero in each row for
$\phi(\A) = \phi(\D)$ to hold.
Hence when the equality holds, $\A$ is pseudo-diagonal. Then there exists a
permutation matrix $\BP$ such that $\Z = \BP^{-1}|\A| = \BP^{-1} \Q \A$ is a
diagonal matrix with elements on diagonal in descending order and are all non-negative,
where $\Q$ is a diagonal matrix s.t.\ $|\Q| = \I$.
By the uniqueness of singular values of a matrix, we have $\Z = |\D|$. Hence
equality in inequality\ref{ineq:svd} holds when $|\A| = \BP|\D| = |\BP\D|$.

The proof for $\U, \V \in\OG(n)$ is similar.
\end{proof}

Note that $\D$, modulo sign and permutation, is the global
minimizer for a large class of functions $f$.

After applying Lemma~\ref{lem:criteria-infimum}, we have the following theorem.
\begin{theorem}[SVD induced from optimization]
\label{thm:svd-opt}
We are given a function $f$ such that
$f(x) = f(|x|)$ and $f(\sqrt{|x|})$ is strictly concave and subadditive, and
$\phi(\X) = \sum_{ij} f(x_{ij})$.
Let $\hat{\U}$ and $\hat{\V}$ be an optimal solution of the following optimization:
\[
\inf_{\U\in\UG(n),\V\in\UG(n)} \phi(\U^*\M\V).
\]
Then if SVD of $\M$ is $\M=\U\S\V^*$, there exist a permutation matrix
$\BP$ and a diagonal matrix $\Z$ such that $\M = \hat{\U}\BP
\Z\hat{\V}^*$ and $|\Z|=\S$.
\end{theorem}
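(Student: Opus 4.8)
The plan is to combine the sharp inequality of Lemma~\ref{lem:svd-cost} with the infimum criterion of Lemma~\ref{lem:criteria-infimum}, using the SVD factorization $\M=\U\S\V^*$ both to pin down the optimal value and, through the equality case, to recover the decomposition.

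First I would reduce the two-sided objective to the left-action form the lemmas expect. For any $\U',\V'\in\UG(n)$, substituting $\M=\U\S\V^*$ gives $\U'^*\M\V' = (\U'^*\U)\,\S\,(\V^*\V')$; setting $W=\U'^*\U$ and $Y=\V'^*\V$, both of which range over all of $\UG(n)$ as $\U',\V'$ do, this is $W\S Y^*$. Since $\S$ is diagonal and hence pseudo-diagonal, Lemma~\ref{lem:svd-cost} yields $\phi(W\S Y^*)\ge\phi(\S)$ for every $W,Y\in\UG(n)$, with the value $\phi(\S)$ attained at $\U'=\U,\V'=\V$. (Equivalently, $\vect(\U'^*\M\V')=(\V'^\top\otimes\U'^*)\vect(\M)$ is the left action of the unit group $\UG(n)\otimes\UG(n)$ on $\vect(\M)$, so this is exactly the setting of Lemma~\ref{lem:criteria-infimum} with $\D=\S$.) Hence the infimum equals $\phi(\S)$ and is attained.

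Next I would analyze the equality case to identify the minimizers. Given an optimal pair $\hat\U,\hat\V$ we have $\phi(\hat\U^*\M\hat\V)=\phi(\S)$; set $\A\eqdef\hat\U^*\M\hat\V$, so that $\M=\hat\U\A\hat\V^*$. Writing $\A=(\hat\U^*\U)\,\S\,(\V^*\hat\V)=W\S Y^*$ with $W,Y\in\UG(n)$ as above and invoking the equality clause of Lemma~\ref{lem:svd-cost}, there is a row permutation matrix $\BP$ with $|\A|=|\BP\S|$. To extract the diagonal factor, I would use that $\BP$ is a permutation matrix, so entrywise modulus commutes with left multiplication by $\BP^\top$: defining $\Z\eqdef\BP^\top\A$ (a row permutation of $\A$), we get $|\Z|=\BP^\top|\A|=\BP^\top|\BP\S|=\BP^\top\BP\,|\S|=|\S|=\S$. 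Because $|\Z|=\S$ is diagonal and an entry of $\Z$ vanishes exactly when its modulus does, $\Z$ is itself diagonal; since $\A=\BP\Z$ we conclude $\M=\hat\U\A\hat\V^*=\hat\U\BP\Z\hat\V^*$ with $|\Z|=\S$, as claimed.

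I expect the main obstacle to be precisely this final structural bookkeeping rather than any fresh analysis: the hard content --- the inequality and its sparsifying-based equality characterization --- is already delivered by Lemma~\ref{lem:svd-cost}, so the remaining work is to (i) translate the two-sided, $\M$-in-the-middle objective into the left-multiplication form of the lemmas via the fixed SVD factors, and (ii) show that the \emph{matrix} $\A$, not merely $|\A|$, splits as a permutation times a diagonal matrix with the correct moduli. The one subtlety to keep in mind is that $\Z$ is in general complex (its phases are free), which is why the conclusion controls only $|\Z|=\S$ and not $\Z$ itself.
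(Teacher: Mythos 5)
Your proposal is correct and follows exactly the route the paper intends: the paper's own ``proof'' is the single sentence ``After applying Lemma~\ref{lem:criteria-infimum}, we have the following theorem,'' i.e., combine the inequality and equality clause of Lemma~\ref{lem:svd-cost} with the infimum criterion of Lemma~\ref{lem:criteria-infimum}. Your write-up simply supplies the bookkeeping the paper leaves implicit (the reduction $\U'^*\M\V' = W\S Y^*$ and the extraction of the diagonal factor $\Z=\BP^\top\A$ from $|\A|=|\BP\S|$), and it does so correctly.
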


\begin{corollary}
With $\phi$ as in Theorem~\ref{thm:svd-opt}, eignedecomposition of a Hermitian
matrix $\M$ can be induced from
\[
\inf_{\U\in\UG(n)} \phi(\U^*\M\U).
\]

Similarly, eignedecomposition of a real symmetric matrix $\M$ can be induced from
\[
\inf_{\U\in\OG(n)} \phi(\U^\top\M\U).
\]
\end{corollary}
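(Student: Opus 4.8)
The plan is to reduce both claims to the infimum criterion of Lemma~\ref{lem:criteria-infimum}, taking as the target ``diagonal'' the eigenvalue matrix supplied by the spectral theorem. For the Hermitian case, write the eigendecomposition $\M=\U_0\Lam\U_0^*$ with $\U_0\in\UG(n)$ and $\Lam$ real diagonal, and work with the unit group $\GM=\{\conj\W\otimes\W:\W\in\UG(n)\}$, which is the ``Schur'' group and is indeed a unit group by Lemma~\ref{lem:unit-group}(iv). Using column-major vectorization and the identity $\vect(\A\X\B)=(\B^\top\otimes\A)\vect(\X)$, I would first record that $\phi(\U^*\M\U)=\varphi\big((\conj\W\otimes\W)\vect(\M)\big)$ with $\W=\U^*$, so that minimizing $\phi(\U^*\M\U)$ over $\UG(n)$ is exactly minimizing $\varphi$ over the orbit $\{\G\vect(\M):\G\in\GM\}$.

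Then I would verify the two hypotheses of Lemma~\ref{lem:criteria-infimum} with $\D:=\Lam$. For realizability, the spectral theorem gives $\vect(\M)=\vect(\U_0\Lam\U_0^*)=(\conj{\U_0}\otimes\U_0)\vect(\Lam)$, and $\A:=\conj{\U_0}\otimes\U_0$ lies in $\GM$, so $\vect(\M)=\A\vect(\Lam)$. For the descent inequality, at the matrix level I need $\phi(\W\Lam\W^*)\ge\phi(\Lam)$ for every $\W\in\UG(n)$; since $\Lam$ is (pseudo-)diagonal this is precisely the special case $\U=\V=\W$ of Lemma~\ref{lem:svd-cost}. With both hypotheses in hand, Lemma~\ref{lem:criteria-infimum} yields $\inf_{\U\in\UG(n)}\phi(\U^*\M\U)=\phi(\Lam)$, attained at $\U=\U_0$, so that $\U_0^*\M\U_0=\Lam$ is diagonal and $\M=\U_0\Lam\U_0^*$ is exactly the induced eigendecomposition. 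The real symmetric case is identical: take the orthogonal spectral decomposition $\M=\U_0\Lam\U_0^\top$ with $\U_0\in\OG(n)$, use the group $\{\U\otimes\U:\U\in\OG(n)\}$, and invoke inequality~(\ref{ineq:svd}) from Lemma~\ref{lem:svd-cost} in place of the unitary inequality.

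The step that needs the most care is the equality analysis that certifies what ``induced'' means, rather than the inequality itself. The eigenvalue matrix $\Lam$ achieves the same objective value as the singular-value matrix $\S$ appearing in Theorem~\ref{thm:svd-opt}, because for a Hermitian (resp.\ real symmetric) matrix the singular values are the absolute values of the eigenvalues, so $\phi(\Lam)=\sum_i f(|\lambda_i|)=\phi(\S)$; this is what guarantees that restricting the two-sided SVD orbit to the diagonal subgroup $\V=\U$ does not raise the infimum above the global SVD minimum. I would then note, exactly as in the permutation factor $\BP$ of Theorem~\ref{thm:svd-opt}, that a minimizing $\hat\U$ pins down $\hat\U^*\M\hat\U$ only up to the reordering and sign ambiguity allowed by the equality clause of Lemma~\ref{lem:svd-cost} (a Hermitian pseudo-diagonal matrix can carry antidiagonal $2\times2$ blocks that are themselves one rotation away from diagonal and leave $\phi$ unchanged); hence the eigendecomposition is the induced decomposition in the same ``modulo permutation and sign'' sense used throughout this section.
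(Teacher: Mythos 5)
Your proposal is correct and follows exactly the route the paper intends for this corollary (which it states without an explicit proof): represent the similarity orbit via the unit group $\{\conj{\W}\otimes\W:\W\in\UG(n)\}$, use the spectral theorem to realize $\vect(\M)=(\conj{\U_0}\otimes\U_0)\vect(\Lam)$, apply Lemma~\ref{lem:svd-cost} with $\U=\V$ for the descent inequality, and conclude with Lemma~\ref{lem:criteria-infimum}. Your added observation that $\phi(\Lam)=\phi(\S)$ (so restricting to the diagonal subgroup $\V=\U$ does not raise the infimum above the SVD optimum) and the discussion of the permutation/sign ambiguity are consistent with the paper's framework and strengthen the write-up.
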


From the above optimization, we can derive several inequalities.

\begin{corollary}[The Schatten $p$-norm and $\ell_p$-norm inequality]
\label{cor:schatten-ineq}
The
$\ell_p$-norm of matrix $\A$ is larger (smaller) than the Schatten $p$-norm of
$\A$ when $0\leq p< 2$ $(>2)$.

In particular, we have
\[
\|\M\|_p \ge \inf_{\U,\V\in\UG} \|\U\M\V^*\|_p = \|\M\|_{*p} \quad \text{ when }
0<p< 2,
\]
and
\[
\|\M\|_p \le \sup_{\U,\V\in\UG} \|\U\M\V^*\|_p = \|\M\|_{*p}\quad \text{ when }
p> 2.
\]
\end{corollary}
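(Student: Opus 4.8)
The plan is to recognize that $\|\cdot\|_p^p$ and its negation are exactly the costs $\phi(\X)=\sum_{ij}f(x_{ij})$ covered by Lemma~\ref{lem:svd-cost}, so that the Schatten $p$-norm emerges as the optimal value of the SVD-inducing GOO. First, for the regime $0<p<2$ I would take $f(x)=|x|^p$, so that $\phi(\X)=\|\X\|_p^p$ and $f(\sqrt{|x|})=|x|^{p/2}$. Since $0<p/2<1$, this power function is strictly concave on $[0,\infty)$ and subadditive (vanishing at the origin, it is the standard subadditive concave function), so $f$ meets the hypotheses of Lemma~\ref{lem:svd-cost}. Writing the SVD as $\M=\U\S\V^*$ with $\S$ diagonal and nonnegative, Lemma~\ref{lem:svd-cost} applied with $\D=\S$ gives $\phi(\M)=\phi(\U\S\V^*)\ge\phi(\S)$, i.e.\ $\|\M\|_p^p\ge\|\S\|_p^p$; taking $p$-th roots (a monotone operation on $[0,\infty)$) yields $\|\M\|_p\ge\|\S\|_p=\|\M\|_{*p}$, the desired first inequality, since $\|\S\|_p=\|\dg(\S)\|_p$ is the $\ell_p$-norm of the singular values.

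For the accompanying equality $\inf_{\U,\V\in\UG}\|\U\M\V^*\|_p=\|\M\|_{*p}$ I would invoke Lemma~\ref{lem:criteria-infimum} in its vectorized form over the unit group $\GM=\{\conj{\V}\otimes\U:\U,\V\in\UG\}$. The hypothesis $\phi(\G\,\vect(\S))\ge\phi(\vect(\S))$ for every $\G\in\GM$ is precisely Lemma~\ref{lem:svd-cost}, while the SVD supplies the required group element: $\vect(\M)=(\conj{\V}\otimes\U)\,\vect(\S)$ with $\conj{\V}\otimes\U\in\GM$. Hence $\inf_{\U,\V}\phi(\U\M\V^*)=\phi(\S)=\|\S\|_p^p$, and monotonicity of $t\mapsto t^{1/p}$ transfers this to $\inf_{\U,\V}\|\U\M\V^*\|_p=\|\S\|_p=\|\M\|_{*p}$.

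For $p>2$ the same argument runs with the sign reversed, using the cost of item (3) of the Example. I would take $f(x)=-|x|^p$, so $\phi(\X)=-\|\X\|_p^p$ and $f(\sqrt{|x|})=-|x|^{p/2}$ with $p/2>1$; here $|x|^{p/2}$ is convex and superadditive through the origin, so its negation is strictly concave and subadditive and again satisfies Lemma~\ref{lem:svd-cost}. The inequality $\phi(\U\D\V^*)\ge\phi(\D)$ now reads $\|\U\D\V^*\|_p\le\|\D\|_p$, so $\phi(\M)\ge\phi(\S)$ becomes $\|\M\|_p\le\|\M\|_{*p}$; and because we are minimizing the negated cost, the GOO infimum turns into a supremum, giving $\sup_{\U,\V\in\UG}\|\U\M\V^*\|_p=\|\S\|_p=\|\M\|_{*p}$ via the same application of Lemma~\ref{lem:criteria-infimum}.

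The step I would watch most carefully—and the only delicate point—is the bookkeeping of inequality directions: since the case $p>2$ is handled by minimizing a \emph{negated} cost, each $\ge$ produced by Lemma~\ref{lem:svd-cost} flips to a $\le$ for the norm itself, and the infimum over the orbit becomes a supremum. Beyond that, the work is purely the elementary verification that $|x|^{p/2}$ (for $0<p/2<1$) and $-|x|^{p/2}$ (for $p/2>1$) are strictly concave and subadditive, which is the subadditivity, respectively superadditivity, of power functions through the origin. The endpoint $p=0$ in the stated range reduces to the sparsity-versus-rank comparison (number of nonzero entries against number of nonzero singular values) and is most cleanly obtained as a limiting case of the $0<p<2$ argument.
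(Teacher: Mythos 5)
Your proposal is correct and follows essentially the same route as the paper: both use $f(x)=|x|^p$ for $0<p<2$ and $f(x)=-|x|^p$ for $p>2$ (with the resulting sign/inf--sup flip) and pass through the SVD-inducing GOO machinery. The only difference is presentational: the paper cites Theorem~\ref{thm:svd-opt} directly, while you unroll that theorem into its two ingredients, Lemma~\ref{lem:svd-cost} and Lemma~\ref{lem:criteria-infimum}, which is exactly how the paper derives it.
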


\begin{proof} $f(x)=|x|^p$ satisfies $f(0)\geq 0$, and $f(\sqrt{|x|})$ is
strictly concave when $0\leq p <2$. On the other hand, $f(x)=-|x|^p$ satisfies
$f(0) \geq 0$ and $f(\sqrt{|x|})$ is strictly concave when $p>2$. By
Theorem~\ref{thm:svd-opt} we have
\[
\inf_{\U,\V\in\UG} \|\U\M\V^*\|_p = \|\M\|_{*p} \quad \text{ when }
0<p< 2,
\]
and
\[
\inf_{\U,\V\in\UG} -\|\U\M\V^*\|_p = -\|\M\|_{*p}\quad \text{ when }
p> 2.
\]

\end{proof}

\begin{corollary} [Duality gap]
Given SVD of $\M$ as $\M=\U\D\V^*$,  we have
\[ \|\M\|_p \geq \|\D\|_p\geq \|\D \|_q\geq  \|\M\|_q  \]
where $0<p<2<q$.
\end{corollary}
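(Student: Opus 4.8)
The plan is to read the claimed chain of four quantities as two Schatten-versus-$\ell_p$ comparisons sandwiching one purely vectorial $\ell_p$-monotonicity step, and to supply each link from a result already in hand. The first observation I would record is the identity $\|\D\|_p = \|\M\|_{*p}$, valid for every $p>0$: since $\D$ is (pseudo-)diagonal and its nonzero entries are exactly the singular values of $\M$, the entrywise $\ell_p$-norm $\|\D\|_p = (\sum_{ij}|d_{ij}|^p)^{1/p}$ is nothing but the $\ell_p$-norm of the singular-value vector, which is by definition the Schatten $p$-norm $\|\M\|_{*p}$. The same identity holds with $q$ in place of $p$.

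With this identity the two outer inequalities follow directly from Corollary~\ref{cor:schatten-ineq}. For the regime $0<p<2$ the corollary gives $\|\M\|_p \ge \|\M\|_{*p}$, and rewriting the right-hand side as $\|\D\|_p$ yields the leftmost inequality $\|\M\|_p \ge \|\D\|_p$. For the regime $q>2$ the corollary gives $\|\M\|_q \le \|\M\|_{*q} = \|\D\|_q$, which is exactly the rightmost inequality $\|\D\|_q \ge \|\M\|_q$.

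It remains to establish the middle inequality $\|\D\|_p \ge \|\D\|_q$. Here I would discard the matrix interpretation entirely and treat $\D$ as the single vector $\s$ of its singular values, so that $\|\D\|_p = \|\s\|_p$ and $\|\D\|_q = \|\s\|_q$. The desired bound is then the standard monotonicity of the $\ell_p$-functional of a fixed vector: for any $\s$ and any $0 < p \le q$ one has $\|\s\|_p \ge \|\s\|_q$ (this holds for all $p>0$, not only $p\ge 1$). Since $0<p<2<q$ forces $p<q$, this applies and gives $\|\D\|_p \ge \|\D\|_q$. Concatenating the three links produces the asserted chain.

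I do not anticipate a genuine obstacle here: every step is a direct invocation of an earlier result or a textbook fact. The only points requiring care are bookkeeping ones — keeping track of which side of each inequality carries the Schatten norm versus the entrywise norm, and noting that the middle step relies solely on $p<q$ (and not on concavity, nor on $2$ being the crossover), so that the crossover value $2$ enters the argument only through the two applications of Corollary~\ref{cor:schatten-ineq}.
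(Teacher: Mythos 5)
Your proposal is correct and follows essentially the same route as the paper: the middle inequality from monotonicity of the $\ell_p$-functional in $p$, and the two outer inequalities from Corollary~\ref{cor:schatten-ineq} combined with the identity $\|\D\|_p = \|\M\|_{*p}$. The only cosmetic difference is that you justify that identity directly from the definition of the Schatten norm (the diagonal of $\D$ being the singular values), whereas the paper cites Theorem~\ref{thm:svd-opt} for it; both are valid.
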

\begin{proof}
Due to the non-increasing property of the $\ell_p$-norm \wrt $p$, we have
$\|\D\|_p\ge \|\D\|_q$. Also by Theorem~\ref{thm:svd-opt} we have
$\|\D\|_p = \|\M\|_{*p}$ and $\|\D\|_q
= \|\M\|_{*q}$. Applying Corollary~\ref{cor:schatten-ineq} completes the proof.
\end{proof}

\begin{corollary}
The von Neumann entropy of density matrix $\M$ is smaller than the sum of the Shannon
entropies of rows (columns) of $\M$.
\end{corollary}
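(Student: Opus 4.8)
The plan is to read this corollary off as a direct instance of the eigendecomposition result, with the cost function built from the Shannon entropy sparsifying function $f(x)=-|x|\log|x|$ (item (5) of the Example). First I would set $\phi(\X)=\sum_{ij}f(x_{ij})=-\sum_{ij}|x_{ij}|\log|x_{ij}|$. Evaluating $\phi$ on the density matrix $\M$ entrywise, $\phi(\M)=-\sum_{ij}|m_{ij}|\log|m_{ij}|$ is exactly the total of the per-entry Shannon entropies, i.e. $\sum_i\big(\text{Shannon entropy of row }i\big)$; because $\M$ is Hermitian we have $|m_{ij}|=|m_{ji}|$, so the column grouping produces the identical total, which justifies the parenthetical ``(columns)''.

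Next I would introduce the spectral decomposition $\M=\U\D\U^{*}$ with $\U\in\UG(n)$ and $\D=\diag(\lambda_1,\dots,\lambda_n)$ the (real, nonnegative) eigenvalues. Since $\D$ is diagonal, $\phi(\D)=\sum_i f(\lambda_i)=-\sum_i\lambda_i\log\lambda_i$ is precisely the von Neumann entropy $-\tr(\M\log\M)$. The corollary then reduces to the single inequality supplied by the Hermitian specialization of Lemma~\ref{lem:svd-cost} (the eigendecomposition corollary following Theorem~\ref{thm:svd-opt}): applying it, and using $\I\in\UG(n)$ so that $\phi(\M)=\phi(\U^{*}\M\U)\big|_{\U=\I}$ dominates the infimum $\phi(\D)$, we obtain
\[
\phi(\M)=\phi(\U\D\U^{*})\;\ge\;\phi(\D)=\sum_i f(\lambda_i)=-\sum_i\lambda_i\log\lambda_i .
\]
The left side is the sum of the row (column) Shannon entropies and the right side is the von Neumann entropy, which is exactly the claimed bound.

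The one step I expect to be the main obstacle is verifying that $f(x)=-|x|\log|x|$ really meets the hypothesis of Lemma~\ref{lem:svd-cost}, namely that $g(x)\eqdef f(\sqrt{x})=-\tfrac12\sqrt{x}\log x$ be strictly concave, since the Jensen step in that lemma relies on it. Differentiating gives $g''(x)=\tfrac18 x^{-3/2}\log x$, which is negative only on $(0,1)$, so $g$ is strictly concave there but not globally. Fortunately this is precisely the range that occurs: for a density matrix ($\M\succeq 0$, $\tr\M=1$) every eigenvalue lies in $[0,1]$, and every entry satisfies $|m_{ij}|\le\sqrt{m_{ii}m_{jj}}\le 1$, so all arguments feeding into both $\phi(\M)$ and $\phi(\D)$ lie in $[0,1]$. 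Restricting the proof of Lemma~\ref{lem:svd-cost} to this domain keeps the concavity (hence the Jensen inequality) valid, and the argument goes through; the care needed is entirely in confirming this domain restriction.

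As a consistency check I would test the bound on the maximally mixed state $\M=\tfrac1n\I_n$, where both sides equal $\log n$, and on a rank-one projector, where the von Neumann entropy is $0$ while the row-entropy sum is strictly positive. These confirm both the direction of the inequality and that equality holds exactly when $\M$ is already diagonal, matching the equality characterization in Lemma~\ref{lem:svd-cost}.
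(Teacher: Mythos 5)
Your proof is correct and follows essentially the same route as the paper: both reduce the claim to the Hermitian (eigendecomposition) instance of Lemma~\ref{lem:svd-cost}/Theorem~\ref{thm:svd-opt} with the entropy function $f(x)=-|x|\log|x|$, so that $\phi(\M)=\phi(\U\D\U^{*})\ge\phi(\D)$ is exactly the inequality between the sum of row (column) Shannon entropies and the von Neumann entropy. The one place you go beyond the paper is genuinely valuable rather than superfluous: the paper's one-line proof only checks that $f$ itself is strictly concave with $f(0)\ge 0$, whereas the hypothesis of Lemma~\ref{lem:svd-cost} concerns $g(x)=f(\sqrt{x})$, and, as you compute, $g''(x)=\tfrac18 x^{-3/2}\log x$ is negative only on $(0,1)$, so $g$ is not globally strictly concave and the lemma cannot be invoked verbatim. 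Your domain-restriction argument --- that for a density matrix all quantities fed to $g$ (entries, squared column norms, eigenvalues) lie in $[0,1]$, where concavity and hence subadditivity of $g$ do hold, so the proof of Lemma~\ref{lem:svd-cost} goes through on this restricted domain --- is precisely what is needed to make the paper's terse argument rigorous.
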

\begin{proof}
By noting that $f(x) = -x\log{x}$ is strictly concave and $f(0)\geq 0$ when
$x\geq 0$, and that the von Neumann entropy is entropy of diagonal matrix in SVD
of $\M$, the inequality holds.
\end{proof}

\subsubsection{QR Decomposition}

To derive GOO for QR decomposition, we first note that QR decomposition of a
matrix $\M$ can be rewritten as $\M= \Q \D \R$, where $\Q\in\UG(n)$,
$\D$ is diagonal, and $\R\in\UUTG(m)$.

\begin{lemma}[Cost function and group for QR]
\label{lem:qr-cost}
Let $f$ satisfy that $f(x) =
f(|x|)$, $f(\sqrt{x})$ is concave and increasing; $f(0)\geq 0$. Let $\phi(\X) = \sum_{ij} f(x_{ij})$.
If $\Q\in\UG(n)$, $\R\in\UUTG(n)$, and $\D$ is diagonal, then we have
\[
\phi(\Q \D \R) \geq \phi(\D),
\]
with equality  when \ $|\Q \D \R| = |\BP\D|$, where $\BP$
is a row permutation matrix.
\end{lemma}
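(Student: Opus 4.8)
The plan is to follow the column-wise sparsifying argument used for Lemma~\ref{lem:svd-cost}, but to replace the Jensen/trace step (which relied on having a unitary factor on \emph{both} sides) by a direct lower bound on the column norms of $\Q\D\R$. Write $g(x)=f(\sqrt{x})$, so that $g$ is concave, increasing, and $g(0)=f(0)\ge 0$; by Lemma~\ref{lem:44}, $g$ is then also subadditive. Throughout, set $\A=\Q\D\R$ and write $d_j=d_{jj}$ for the diagonal entries of $\D$.

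First I would carry out the key structural step: bounding each column norm of $\A$ from below by the corresponding diagonal entry of $\D$. Since $\R\in\UUTG(n)$, its $j$th column $\R_{\cdot j}$ is supported on rows $1,\dots,j$ with $R_{jj}=1$; hence the $j$th column of $\A$ is $\A_{\cdot j}=\Q\D\R_{\cdot j}$, and because $\Q$ is unitary,
\[
\|\A_{\cdot j}\|^2=\|\D\R_{\cdot j}\|^2=\sum_{i\le j}|d_i|^2|R_{ij}|^2\ge |d_j|^2|R_{jj}|^2=|d_j|^2 .
\]
Thus $\|\A_{\cdot j}\|\ge|d_j|$. This is precisely where the hypothesis that $f$ (equivalently $g$) be \emph{increasing} is consumed: unlike the SVD case, the right factor $\R$ is not unitary, so the column norms are only dominated from below rather than preserved, and monotonicity is needed to pass from $\|\A_{\cdot j}\|\ge|d_j|$ to $f(\|\A_{\cdot j}\|)\ge f(|d_j|)$.

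Next I would apply a sparsifying bound column by column and then sum. The subadditivity in Lemma~\ref{lem:41} gives $\sum_i f(|a_{ij}|)\ge f(\|\A_{\cdot j}\|)\ge f(|d_j|)$, but this collapses an entire $n$-entry column into a single term and so discards the contributions of the off-diagonal zeros of $\D$. The main obstacle is bookkeeping these $f(0)$ terms so that the two sides match when $f(0)>0$. To handle this I would sharpen the per-column estimate to keep the correct count: since $g-g(0)$ is concave and vanishes at the origin, Lemma~\ref{lem:44} makes it subadditive, whence for any $\x\in\FB^n$,
\[
\sum_i f(|x_i|)=\sum_i g(|x_i|^2)\ge g\Big(\sum_i|x_i|^2\Big)+(n-1)g(0)=f(\|\x\|)+(n-1)f(0).
\]
Applying this to each column of $\A$ and combining with $\|\A_{\cdot j}\|\ge|d_j|$ yields $\sum_i f(|a_{ij}|)\ge f(|d_j|)+(n-1)f(0)$. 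Summing over $j=1,\dots,n$ gives $\phi(\A)\ge\sum_j f(|d_j|)+n(n-1)f(0)$, and since $\D$ is diagonal its off-diagonal entries contribute exactly $(n^2-n)f(0)=n(n-1)f(0)$ to $\phi(\D)=\sum_{ij}f(d_{ij})$, so the right-hand side equals $\phi(\D)$, establishing $\phi(\Q\D\R)\ge\phi(\D)$.

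Finally, for the (sufficient) equality condition I would argue directly: if $|\Q\D\R|=|\BP\D|$ for a row-permutation $\BP$, then, because $f(x)=f(|x|)$ makes $\phi$ depend only on entrywise magnitudes, $\phi(\Q\D\R)=\phi(\BP\D)$; and since $\BP\D$ is merely a row permutation of the diagonal matrix $\D$, it carries the same multiset of entries together with the same number of zeros, so $\phi(\BP\D)=\phi(\D)$. Hence equality holds, as claimed.
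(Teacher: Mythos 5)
Your proof is correct and takes essentially the same route as the paper's: the per-column subadditivity bound $\sum_i f(|a_{ij}|)\ge f(\|\A_{\cdot j}\|)$ followed by $\|\A_{\cdot j}\|=\|\D\R_{\cdot j}\|\ge |d_j|$ (unitarity of $\Q$, unit diagonal of $\R$, monotonicity of $f$) is exactly the paper's chain $\phi(\A)\ge\tr[g(\A^*\A)]=\tr[g(\R^*\D^*\D\R)]\ge\tr[g(\D^*\D)]$ written out entrywise. Your two deviations are in fact small improvements: the explicit $(n-1)f(0)$ bookkeeping repairs a step the paper glosses over (its identification $\tr[g(\D^*\D)]=\phi(\D)$ is literally valid only when $f(0)=0$), and your equality argument establishes the sufficiency direction, which is what the statement's ``when'' asserts, whereas the paper instead sketches the necessity direction.
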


\begin{proof}
Let $g(x)=f(\sqrt{|x|})$ and $\A=\Q\D\R$, and let $g(\X)=[g(x_{ij})]$.
First we prove the inequality.
As $g(x)$ is sparsifying and increasing, we have
\[
\phi(\A)\geq \tr [g (\A^*\A)] = \tr [g (\R^* \D^*\D
\R)] \geq \tr [g (\D^*\D)] = \phi(\D).
\]

Next we check the equality condition.
For the equality to hold in inequality $\tr[ g (\R^* \D^*\D
\R)] \geq \tr[g (\D^*\D)]$, as $g$ is increasing, $\R$ needs to be diagonal. Hence, $\R
= \I$.
Now for the equality to hold in $\phi(\A)\ge \tr [g (\A^*\A)]$, $\A$ needs to be
pseudo-diagonal.
Thus, the equality holds only when $|\A| = \BP|\D| = |\BP \D|$ where $\BP$ is a
row permutation matrix.
\end{proof}

Similarly, we derive the optimization inducing QR decomposition.
\begin{theorem}[QR induced from optimization]
Assume that the conditions are satisfied in Lemma~\ref{lem:qr-cost}.
Let $(\hat{\Q}, \hat{\R})$ be optimal solution of optimization as follows
\[
\inf_{\Q\in\UG(n),\R\in\UUTG(n)} \phi(\Q^{-1}\M\R^{-1})\textrm{.}
\]
Then $\M =
\hat{\Q} \Z$ is the QR decomposition of $\M$, where
$\Z = \hat{\Q}^{-1}\M$.
\end{theorem}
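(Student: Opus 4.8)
The plan is to follow the same two-step pattern used for the SVD result (Theorem~\ref{thm:svd-opt}): reduce the problem to Lemma~\ref{lem:criteria-infimum} by invoking the existence of an ordinary QR factorization, and then read off the structure of the minimizer from the equality analysis already carried out in Lemma~\ref{lem:qr-cost}. First I would record the group action: since $\vect(\Q\M\R) = (\R^\top\otimes\Q)\vect(\M)$, the unit group $\GM = \{\R^\top\otimes\Q : \Q\in\UG(n),\, \R\in\UUTG(n)\}$ acts on $\M$ by $\M\mapsto\Q\M\R$, and because inversion is a bijection of each of $\UG(n)$ and $\UUTG(n)$, the stated objective satisfies $\inf\phi(\Q^{-1}\M\R^{-1}) = \inf_{\Q\in\UG(n),\R\in\UUTG(n)}\phi(\Q\M\R)$.

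Next I would invoke the existence of a QR factorization $\M = \Q_0 R_0$ with $\Q_0\in\UG(n)$ and $R_0$ upper triangular, and split $R_0 = \D_0\tilde\R_0$ into its diagonal part $\D_0$ and an upper-unit-triangular factor $\tilde\R_0\in\UUTG(n)$, so that $\M = \Q_0\D_0\tilde\R_0$. This exhibits $\M$ as an element of $\GM$ acting on $\D_0$. Lemma~\ref{lem:qr-cost} supplies exactly the hypothesis $\phi(\Q\D_0\R)\ge\phi(\D_0)$ for all $\Q\in\UG(n),\R\in\UUTG(n)$, so Lemma~\ref{lem:criteria-infimum} applies and yields that the infimum equals $\phi(\D_0)$ and is attained.

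Finally I would analyze an optimal pair $(\hat\Q,\hat\R)$. Writing $\A = \hat\Q^{-1}\M\hat\R^{-1} = \Q'\D_0\R'$ with $\Q' = \hat\Q^{-1}\Q_0\in\UG(n)$ and $\R' = \tilde\R_0\hat\R^{-1}\in\UUTG(n)$, optimality forces $\phi(\A) = \phi(\D_0)$, i.e.\ equality throughout Lemma~\ref{lem:qr-cost}. The two equality clauses then do the work: the step using that $g(x)=f(\sqrt{|x|})$ is increasing forces $\R'$ to be diagonal, hence $\R'=\I$ and $\hat\R = \tilde\R_0$; and the subadditivity step forces $\A$ to be pseudo-diagonal. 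Therefore $\Z = \hat\Q^{-1}\M = \A\hat\R = \Q'\D_0\tilde\R_0 = \Q' R_0$, where $\Q'$ is a monomial (phased-permutation) unitary matrix. For the distinguished minimizer $\hat\Q = \Q_0$ one gets $\Q'=\I$ and $\Z = R_0$ exactly, so $\M = \hat\Q\Z$ with $\hat\Q\in\UG(n)$ and $\Z$ upper triangular is a QR factorization.

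The main obstacle is this last step: ruling out a nontrivial permutation inside $\Q'$. Because $\phi(\X) = \sum_{ij}f(x_{ij})$ is invariant under permuting and rephasing the diagonal entries, every pair $(\Q_0\Q'^{-1},\,\tilde\R_0)$ with $\Q'$ monomial is also a global minimizer, and for such a minimizer $\Z = \Q' R_0$ is merely a row permutation (and rephasing) of an upper-triangular matrix. Genuinely upper triangular $\Z$ therefore requires $\Q'$ to carry the trivial permutation, which one argues from the fact that a row permutation of an upper-triangular matrix with nonzero diagonal can be upper triangular only for the identity permutation (by a counting argument on first-nonzero positions). This is precisely the ambiguity that the SVD statement absorbs into an explicit permutation matrix $\BP$; the honest reading of the present theorem is thus either to single out the distinguished minimizer $\hat\Q=\Q_0$ or to state the conclusion up to such a row permutation and diagonal phase.
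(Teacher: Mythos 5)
Your proposal follows exactly the route the paper intends: the paper gives no explicit proof of this theorem (it is stated ``similarly'' after the SVD case), the intended argument being precisely your combination of Lemma~\ref{lem:qr-cost}'s inequality and equality analysis with Lemma~\ref{lem:criteria-infimum} applied to the factorization $\M=\Q_0\D_0\tilde\R_0$. Your closing observation is correct and worth keeping: since $\phi(\X)=\sum_{ij}f(x_{ij})$ is invariant under row permutations and rephasing, the minimizer is only unique up to a monomial unitary factor, so the claim that $\Z=\hat\Q^{-1}\M$ is upper triangular holds only for a distinguished minimizer (or modulo a row permutation $\BP$ and phases) --- exactly the ambiguity that the paper's SVD theorem (Theorem~\ref{thm:svd-opt}) makes explicit by writing $\M=\hat\U\BP\Z\hat\V^*$, but that the QR statement here glosses over.
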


\subsubsection{Matrix Equivalence by the Special Linear
Group}\label{subsub:equiv}

An interesting question is whether we can extend  the following optimization form
to more general groups $\GM_1$ and $\GM_2$:
\[
\inf_{\U\in\GM_1, \; \V\in\GM_2} \; \phi(\U\M\V).
\]
It turns out that we can use the special linear group to construct a unit group, and
hence, induce matrix equivalence from an optimization.

\begin{lemma}[Matrix equivalence decomposition]
An  invertible matrix $\M \in \mathbb{F}^{n\times n}$ can be decomposed as
$\M=\A \D \B$, where $\det(\A)=\det(\B)=1$, and $\D=\det(\M)^{1/n} \I$.
\end{lemma}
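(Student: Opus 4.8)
The plan is to observe that the claimed decomposition is really just a rescaling: all of the ``volume'' of $\M$ is concentrated into the scalar matrix $\D$, leaving behind a determinant-one factor that absorbs everything else. First I would fix a value $c=\det(\M)^{1/n}$, that is, a chosen $n$-th root of $\det(\M)$ in $\FB$. Since $\M$ is invertible we have $\det(\M)\neq 0$, so $c\neq 0$ and $c^{-1}$ exists. This is the only place where a genuine hypothesis on the field enters: $\FB$ must contain such a root (over $\CC$ it always does; over $\RB$ one needs $\det(\M)>0$ or $n$ odd), but the statement already presupposes the symbol $\det(\M)^{1/n}$ is meaningful, so I would simply fix one such root.

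Next I would exhibit the decomposition explicitly by setting $\D=c\I$, $\B=\I$, and $\A=c^{-1}\M$, then verify the three requirements directly. For the product, $\A\D\B=(c^{-1}\M)(c\I)\I=\M$. For $\B$, trivially $\det(\B)=\det(\I)=1$. The only computation is $\det(\A)$, which follows from the scaling identity $\det(c^{-1}\M)=(c^{-1})^n\det(\M)=\det(\M)^{-1}\det(\M)=1$, so $\A\in\SLG(n)$. As a consistency check one also has $\det(\D)=c^n=\det(\M)$, matching $\det(\A)\det(\D)\det(\B)=\det(\M)$.

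Finally I would note that this lemma is exactly the statement that every invertible $\M$ lies in the $\SLG(n)\times\SLG(n)$-orbit of the canonical scalar matrix $\det(\M)^{1/n}\I$, which is the diagonal form targeted by the ``Matrix Equivalence'' row of Table~\ref{tab:group}. The main obstacle here is essentially nonexistent analytically: there is no infimum to control and no sparsifying argument to invoke, only the care needed to pin down an $n$-th root in $\FB$ and the routine use of the $\det(c\M)=c^n\det(\M)$ scaling law. If a more symmetric factorization were desired (splitting part of the ``shape'' of $\M$ into $\B$ rather than loading it all onto $\A$), one could instead choose any $\A,\B\in\SLG(n)$ with $\A\B=c^{-1}\M$; but the asymmetric choice $\B=\I$ already satisfies the stated conditions, so I would present that.
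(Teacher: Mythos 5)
Your proposal is correct and is essentially identical to the paper's proof: both set $\A=\det(\M)^{-1/n}\M$, $\B=\I_n$, $\D=\det(\M)^{1/n}\I$ and verify the product and determinant conditions by the scaling law $\det(c\M)=c^n\det(\M)$. Your added remarks on pinning down an $n$-th root in $\FB$ are a reasonable clarification the paper leaves implicit, but they do not change the argument.
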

\begin{proof}
Just let
\begin{align}
\A=\det(\M)^{-1/n} \M, \quad \B=\I_n, \quad \D =\det(\M)^{1/n} \I,
\end{align}
which gives an existence proof.
\end{proof}

\begin{lemma}
\label{lem:matrix-equivalence-cost}
If $f(\sqrt{x})$ is strictly concave and increasing, and $f(\sqrt{e^x})$ is
convex, $f(0)\geq 0$, $\A,\B\in\SLG(n)$ and $\D = \lambda\I$, then $\phi(\A \D
\B)\geq  \phi(\D)$, with equality  iff there exists a permutation matrix $\BP$
such that $|\A \D \B| = |\BP \D|$.
\end{lemma}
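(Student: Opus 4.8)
The plan is to reduce the statement to an entrywise inequality subject to a single determinant constraint, and then to run the two scalar inequalities available in the preliminaries—subadditivity (Lemma~\ref{lem:44}) for the spreading \emph{within} each row, and the multiplicative Jensen bound (Lemma~\ref{lem:42}) for the determinant coupling \emph{across} rows. First I would set $\Y\eqdef\A\D\B=\lambda\,\A\B$ and record that, since $\A,\B\in\SLG(n)$, we have $|\det\Y|=|\det\A|\,|\lambda|^n\,|\det\B|=|\lambda|^n$. The case $\lambda=0$ is immediate (both sides equal $n^2f(0)$ and $|\Y|=|\BP\D|=\0$), so I assume $\lambda>0$. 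Writing $g(x)\eqdef f(\sqrt x)$ for $x\ge 0$, the hypotheses say exactly that $g$ is strictly concave and increasing with $g(0)=f(0)\ge 0$, hence subadditive by Lemma~\ref{lem:44} and sparsifying by Theorem~\ref{thm:sparsifying}, while $g(e^x)=f(\sqrt{e^x})$ is convex.

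The quantitative heart of the argument treats each row separately while retaining the $f(0)$ baseline. Let $z_{ij}\eqdef|y_{ij}|^2$ and let $r_i$ denote the Euclidean norm of the $i$th row of $\Y$, so $\sum_j z_{ij}=r_i^2$ and $f(|y_{ij}|)=g(z_{ij})$. I would establish the refined subadditivity
\[
\sum_{j=1}^n g(z_{ij}) \ \ge\ g\Big(\sum_{j=1}^n z_{ij}\Big)+(n-1)g(0) \ =\ f(r_i)+(n-1)f(0),
\]
obtained by induction from the pairwise bound $g(a)+g(b)\ge g(a+b)+g(0)$, which is just the subadditivity of the concave function $x\mapsto g(x)-g(0)$ (vanishing at $0$). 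Summing over the $n$ rows gives $\phi(\Y)\ge\sum_i f(r_i)+(n^2-n)f(0)$. I expect this refinement to be the main obstacle: a direct appeal to Lemma~\ref{lem:41} collapses each whole row and discards precisely the $(n^2-n)f(0)$ that reappears in $\phi(\D)=nf(\lambda)+(n^2-n)f(0)$, so it is too weak whenever $f(0)>0$; the extra $(n-1)g(0)$ terms per row are exactly what one must bookkeep.

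Next I would bound $\sum_i f(r_i)=\sum_i g(r_i^2)$ from below via the determinant constraint. Invertibility of $\Y$ forces every $r_i>0$, so Lemma~\ref{lem:42} applied to $g$ (using convexity of $g(e^x)$) gives $\sum_i g(r_i^2)\ge n\,g\big((\prod_i r_i^2)^{1/n}\big)=n\,g\big((\prod_i r_i)^{2/n}\big)$. Hadamard's inequality yields $\prod_i r_i\ge|\det\Y|=\lambda^n$, and since $g$ is increasing we obtain $\sum_i f(r_i)\ge n\,g(\lambda^2)=n f(\lambda)$. Combining with the previous paragraph gives $\phi(\A\D\B)=\phi(\Y)\ge nf(\lambda)+(n^2-n)f(0)=\phi(\D)$, which is the asserted inequality and also supplies the hypothesis $\phi(\G\D)\ge\phi(\D)$ needed by Lemma~\ref{lem:criteria-infimum}.

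Finally, for the equality characterization I would trace equality through each step. Equality in the refined row subadditivity, using that $x\mapsto g(x)-g(0)$ is strictly concave and sparsifying, forces at most one nonzero entry per row of $\Y$; since $\det\Y\ne 0$ there is no zero row, so each row has exactly one nonzero, and invertibility promotes this to exactly one nonzero per row \emph{and} per column (two rows sharing a column would leave an empty column and make $\det\Y=0$). Thus $|\Y|$ has the support of a permutation matrix $\BP$. Equality in Lemma~\ref{lem:42} forces all $r_i$ equal, and equality in the monotonicity step forces $\prod_i r_i=\lambda^n$; together these give common magnitude $\lambda$ for the nonzero entries, so $|\Y|=\lambda\BP=|\BP\D|$. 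Conversely, any $\Y$ with $|\A\D\B|=|\BP\D|$ has one entry of modulus $\lambda$ per row and column and zeros elsewhere, making every displayed inequality an equality, so $\phi(\A\D\B)=\phi(\D)$.
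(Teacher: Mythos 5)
Your proof is correct, and its skeleton is the same as the paper's: reduce $\phi(\A\D\B)$ to the row norms $r_i$ via concavity/subadditivity of $g(x)=f(\sqrt{x})$, then exploit the unit-determinant constraint through Lemma~\ref{lem:42} (convexity of $g(e^x)$) and monotonicity of $g$ to obtain the lower bound $nf(\lambda)$. Two deviations are worth recording. First, where you quote Hadamard's inequality $\prod_i r_i\ge|\det\Y|$, the paper proves the same fact inline: it takes the LDL factorization $\A\B\B^*\A^*=\L\Z\L^*$ with $\L\in\LUTG(n)$, uses monotonicity of $g$ to pass from the diagonal of $\L\Z\L^*$ to that of $\Z$, and then applies Lemma~\ref{lem:42} to $\Z$ using $\det\Z=1$; since this is exactly a proof of Hadamard's inequality, the difference is cosmetic. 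Second, and more substantively, your refined subadditivity $\sum_j g(z_{ij})\ge g\bigl(\sum_j z_{ij}\bigr)+(n-1)g(0)$ repairs a genuine slip in the paper's own proof: the paper's chain terminates in ``$\tr[g(\lambda^2\I)]=\tr[g(\D\D^*)]=\phi(\D)$'', but $\tr[g(\D\D^*)]=nf(\lambda)$ while $\phi(\D)=nf(\lambda)+(n^2-n)f(0)$, so the paper's argument falls short by $(n^2-n)f(0)$ whenever $f(0)>0$; your per-row bookkeeping of the $(n-1)g(0)$ terms is precisely what closes that hole, as you correctly anticipated. One caveat, which you share with the paper rather than introduce: in the equality analysis, the step asserting that equality in Lemma~\ref{lem:42} forces all $r_i$ to be equal requires \emph{strict} convexity of $f(\sqrt{e^x})$, whereas the hypotheses grant only convexity (if $g$ agrees with $a+b\log x$ on an interval, the ``only if'' direction of the stated equivalence can fail); this is a defect of the lemma's hypotheses themselves, and the paper's terser equality argument suffers from it equally.
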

\begin{proof}
We write   $g(x)=f(\sqrt{x})$ for shorthand.
First prove the inequality.
As $\D = \lambda\I_n$, we have
\[ \phi(\A \D \B) = \phi(\A\B \D).
\]
As $f(\sqrt{x})$ is strictly concave and $f(0)\geq 0$, we have
\[\phi(\A\B \D) \ge \tr[ g (\A\B \D\D^*\B^*\A^*)] = \tr[
g (\lambda^2\A\B\B^*\A^*)].
\]

As $\A\B\B^*\A^*$ is Hermitian, we let its LDL decomposition be $\L\Z\L^*$ where
$\L\in\LUTG(n)$. Because $g$ is increasing, we have
\[
\tr[ g (\lambda^2 \L\Z\L^*)] \ge \tr[ g (\lambda^2 \Z)].
\]
Because $g(e^x)$ is convex and
\[
\det(\Z) = \det(\L\Z\L^*) = \det(\A\B\B^*\A^*) =1\text{,}
\] we have
\[
\tr[ g (\lambda^2 \Z)] \ge \tr[ g(\lambda^2 \I)].
\]
In summary,  we have
\[
\phi(\A\D \B)\ge \tr[ g(\lambda^2 \I)] = \tr[ g(\D\D^*)] = \phi(\D)\textrm{.}
\]

Next we check the equality condition. The equality holds in \[\tr[ g(\lambda^2
\L\Z\L^*)] \ge \tr[ g(\lambda^2 \I)]\] iff $\L\Z\L^* = \I$ and $\A\B\D$ is
pseudo-diagonal. Hence, the equality holds iff $|\A \D \B| = \BP |\D| = |\BP
\D|$.
\end{proof}

\begin{theorem}[Matrix equivalence induced from optimization]
Let $f$ satisfy that $f(x) =
f(|x|)$, $f(\sqrt{x})$ is concave, $f(\sqrt{e^x})$ is convex and increasing, and
$f(0)\geq 0$.
Let $(\hat{\A}, \hat{\B})$ be an optimal solution of the following optimization problem
\[
\inf_{\A\in\SLG(n),\B\in\SLG(n)} \phi(\A^{-1}\M\B^{-1})\textrm{.}
\]
Then there exists a row permutation matrix $\BP$ such that $\M =
\hat{\A} (\BP \Z) \hat{\B}$ is the matrix equivalence decomposition of $\M$ with
$\BP \Z = \lambda\I$, where $\Z = (\hat{\A}\BP)^{-1}\M\hat{\B}^{-1}$.
\end{theorem}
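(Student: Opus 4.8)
The plan is to follow the same template already used for the SVD and QR theorems: recognize the optimization as a group orbit optimization over a unit group, invoke Lemma~\ref{lem:criteria-infimum} to pin down the infimum, and then read off the decomposition from the equality characterization in Lemma~\ref{lem:matrix-equivalence-cost}. First I would note that since $\SLG(n)$ is a group, inversion is a bijection of $\SLG(n)$ onto itself, so the stated problem has the same value as $\inf_{\A,\B\in\SLG(n)}\phi(\A\M\B)$. Writing $\phi=\varphi\circ\vect$ for the separable cost and using the vectorization identity $\vect(\A\M\B)=(\B^\top\otimes\A)\vect(\M)$, the two-sided action becomes a single left action, so the problem is the GOO $\inf_{\G\in\GM}\varphi(\G\vect(\M))$ over $\GM=\{\B^\top\otimes\A:\A,\B\in\SLG(n)\}=\SLG(n)\otimes\SLG(n)$, which is a unit group by Lemma~\ref{lem:unit-group}(ii).

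Next I would verify the two hypotheses of Lemma~\ref{lem:criteria-infimum} with $\D=\lambda\I$ and $\lambda=\det(\M)^{1/n}$. The inequality hypothesis $\varphi(\G\vect(\D))\geq\varphi(\vect(\D))$ for all $\G\in\GM$ is, after translating back through the Kronecker identity, exactly the statement $\phi(\A\D\B)\geq\phi(\D)$ for all $\A,\B\in\SLG(n)$, which is Lemma~\ref{lem:matrix-equivalence-cost}. The factorization hypothesis is supplied by the matrix equivalence decomposition lemma, which gives $\A_0,\B_0\in\SLG(n)$ with $\M=\A_0\D\B_0$, hence $\vect(\M)=(\B_0^\top\otimes\A_0)\vect(\D)$ with $\B_0^\top\otimes\A_0\in\GM$. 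Lemma~\ref{lem:criteria-infimum} then yields $\inf_{\G\in\GM}\varphi(\G\vect(\M))=\phi(\D)$, so any optimizer $(\hat\A,\hat\B)$ satisfies $\phi(\hat\A^{-1}\M\hat\B^{-1})=\phi(\D)$.

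Finally I would extract the decomposition. Writing $\hat\A^{-1}\M\hat\B^{-1}=(\hat\A^{-1}\A_0)\D(\B_0\hat\B^{-1})$, with both outer factors again in $\SLG(n)$, the optimality forces equality in Lemma~\ref{lem:matrix-equivalence-cost}, which produces a row permutation matrix $\BP$ with $|\hat\A^{-1}\M\hat\B^{-1}|=|\BP\D|=|\lambda|\BP$. Thus $\hat\A^{-1}\M\hat\B^{-1}$ is pseudo-diagonal; choosing $\BP$ to match its support, the matrix $\Z\eqdef(\hat\A\BP)^{-1}\M\hat\B^{-1}=\BP^\top(\hat\A^{-1}\M\hat\B^{-1})$ is diagonal with $|\Z|=|\lambda|\I$, and rearranging gives $\M=\hat\A(\BP\Z)\hat\B$, the asserted matrix equivalence decomposition.

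I expect the main obstacle to be the last step: identifying the diagonal residual $\Z$ with the canonical middle factor $\lambda\I$. Lemma~\ref{lem:matrix-equivalence-cost} forces only that the nonzero entries share the common magnitude $|\lambda|=|\det(\M)|^{1/n}$ and sit at the support of a permutation $\BP$; pinning down the individual phases/signs and the permutation, together with the determinant bookkeeping $\det(\hat\A)=\det(\hat\B)=1$ needed to certify that $\M=\hat\A(\BP\Z)\hat\B$ is genuinely the matrix equivalence decomposition, is where the argument must be carried out carefully, exactly as the analogous magnitude identification $|\Z|=\S$ occurs in the SVD theorem.
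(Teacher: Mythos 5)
Your proposal is correct and follows essentially the same route as the paper: the paper establishes this theorem implicitly by the template stated after Lemma~\ref{lem:criteria-infimum} (exhibit a factorization $\M=\A_0\D\B_0$ with $\D=\det(\M)^{1/n}\I$ via the matrix equivalence decomposition lemma, verify the orbit inequality via Lemma~\ref{lem:matrix-equivalence-cost} over the unit group $\SLG(n)\otimes\SLG(n)$, and read the permutation structure off the equality condition), which is exactly what you do. Your closing caveat about signs/phases of $\Z$ is a fair observation, but it is a looseness in the paper's own statement (handled there only up to magnitudes, as in the SVD case), not a gap in your argument.
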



\subsection{Matrix Triangularization as GOO}\label{subsec:triang-goo}

Next we demonstrate how matrix triangularization can be induced from GOO with
proper choice of cost function and unit group. In fact, we can prove that any
triangularization can be induced from optimization \wrt a masked norm.
\begin{lemma}
A matrix decomposition $\M = \G_2 \U \G_1^\top, \G_1\in\GM_1, \G_2\in\GM_2$,
where $\U$ is upper triangular and $\GM_1 \otimes \GM_2$ is a unit group, can
be induced from the following optimization:
\begin{align}\label{form:triang}
\inf_{\G_1\in\GM_1, \G_2\in\GM_2} \phi(\G_2\M\G_1^\top)
\textrm{.}
\end{align}
Here $\phi(\X) = \sum_{ij} f(|x_{ij} {\mathbb I}_{\{i<j\}} |)$ where
$f(x)=0\Rightarrow x=0$, $f(x)\not\equiv 0$, $f(x)\ge 0$.
\end{lemma}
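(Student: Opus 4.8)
The plan is to recast the two-sided group action as a one-sided action and then apply Lemma~\ref{lem:criteria-infimum}. Since $\phi$ acts entrywise and $\vect$ only relabels entries, we have $\phi(\G_2\M\G_1^\top)=\phi\big((\G_1\otimes\G_2)\vect(\M)\big)$, so the problem becomes $\inf_{\G\in\GM}\phi(\G\,\vect(\M))$ with $\GM\eqdef\GM_1\otimes\GM_2$. I will take the triangular factor $\U$ as the canonical form playing the role of $\D$ in Lemma~\ref{lem:criteria-infimum}. To invoke that lemma I must verify two things: first, that there is an element $\A=\G_1^0\otimes\G_2^0\in\GM$ with $\vect(\M)=\A\,\vect(\U)$, i.e.\ $\M=\G_2^0\,\U\,(\G_1^0)^\top$; and second, the inequality $\phi(\G\,\vect(\U))\ge\phi(\vect(\U))$ for every $\G\in\GM$. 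The first is exactly the standing assumption that $\M$ admits a decomposition of the stated shape, which for the concrete entries of Table~\ref{tab:group} (Schur, LU, Cholesky) is furnished by the classical existence theorems.

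The second requirement looks like the crux but is immediate from the hypotheses on $f$. Since $f\ge0$, every summand of $\phi$ is nonnegative, so $\phi\ge0$ identically; in particular, because $f(0)=0$ the unmasked positions contribute nothing. The mask is oriented so that the positions it penalizes are exactly the positions forced to vanish in the triangular factor, so all the penalized entries of $\U$ are zero and
\[
\phi(\vect(\U))=\sum_{ij} f\big(|u_{ij}\,{\mathbb I}_{\{i<j\}}|\big)=0 .
\]
Hence $\phi(\G\,\vect(\U))\ge 0=\phi(\vect(\U))$ holds trivially for all $\G\in\GM$, and Lemma~\ref{lem:criteria-infimum} yields
\[
\inf_{\G_1\in\GM_1,\,\G_2\in\GM_2}\phi(\G_2\M\G_1^\top)=\phi(\U)=0 .
\]
The infimum is attained, since a unit group is closed under inverse and the explicit point $\hat\G_2=(\G_2^0)^{-1}$, $\hat\G_1=(\G_1^0)^{-1}$ gives $\hat\G_2\M\hat\G_1^\top=\U$.

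It remains to read the decomposition off a minimizer. Let $(\hat\G_1,\hat\G_2)$ attain the infimum and put $\D=\hat\G_2\M\hat\G_1^\top$, so that $\phi(\D)=0$. As each penalized summand is nonnegative, every penalized entry $d_{ij}$ satisfies $f(|d_{ij}|)=0$, and the detecting property $f(t)=0\Rightarrow t=0$ forces those entries to vanish; thus $\D$ is triangular. Rearranging gives $\M=\hat\G_2^{-1}\D\,\hat\G_1^{-\top}$ with $\hat\G_2^{-1}\in\GM_2$ and $\hat\G_1^{-1}\in\GM_1$, which is a decomposition of the asserted form $\M=\G_2\U\G_1^\top$.

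I do not expect a genuine analytic obstacle here: the inequality that superficially looks like the heart of the matter collapses to $\phi\ge0$. The care required is instead bookkeeping. One must keep the orientation of the mask ${\mathbb I}_{\{i<j\}}$ aligned with the triangular shape so that the penalized entries of $\U$ really are the zero ones (giving $\phi(\U)=0$), and one must use the two properties of $f$ in their separate roles: $f(0)=0$ makes the unmasked part inert and $\phi(\U)=0$, while $f(t)=0\Rightarrow t=0$ is what forces an optimal $\D$ to be \emph{exactly} triangular rather than merely small off the diagonal. Finally, as in the SVD and QR cases, the induced decomposition is unique only up to the residual freedom left inside the triangular pattern (diagonal scaling and permutation), so the lemma pins down the shape of $\D$ but not $\D$ itself.
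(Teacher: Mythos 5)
Your proof is correct and takes essentially the same route as the paper's: the trivial bound $\phi(\G_2\U\G_1^\top)\ge 0=\phi(\U)$ combined with Lemma~\ref{lem:criteria-infimum}, plus the detecting property $f(t)=0\Rightarrow t=0$ to force a minimizer's core to be exactly triangular. The paper's version is merely terser (it states the zero-set condition loosely as $\phi(\X)=0\iff\X=\0$, whereas your mask-oriented reading, and your explicit handling of existence and attainment, is the accurate spelling-out of the same argument).
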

\begin{proof}
We trivially have
\[\forall \G, \; \phi(\G_2 \U \G_1^\top) \ge 0 = \phi(\U)\text{.}
\]
By Lemma~\ref{lem:criteria-infimum} and
$\phi(\X) = 0\iff \X=\0$, the decomposition can be induced from optimization.
\end{proof}

For example, the Schur decomposition can be induced from
Formula~\ref{form:triang} with $f(x) = |x|$.
We will give a numerical example in
Section~\ref{sec:exps}.

\section{Group Orbit Optimization on Tensor Data}
\label{sec:tensor}

The GOO problem on tensor $\T$ is defined as
\[\inf_{\G_i \in \GM_i} \phi(\prod_i \T \G_i).
\]
When there exists function $\varphi$ s.t.\ $\phi(\T) = \varphi(\vect(\T))$, we
get a form that bears resemblance to the matrix version:
\[\inf_{\G_i \in \GM_i} \varphi(\vect(\prod_i \T \G_i) )= \inf_{\G_i \in \GM_i} \varphi(\prodod_i\G_i\vect(\T)).
\]

Similar to the matrix case in Section~\ref{subsec:induced-matrix-decomp}, we now
illustrate how the Tucker decomposition can be induced from an optimization formulation.
Given a group $\GM=\prodod_i\GM_i$ and $\G=\prodod_i\G_i$ and a tensor $\T$,
we define the following optimization problem
\[\inf_{\G \in \GM} \; \varphi(\G
\vect(\T)) = \inf_{\G_i \in \GM_i} \; \varphi((\prodod_i\G_i) \vect(\T))
=\inf_{\G_i \in \GM_i} \; \varphi(\vect(\prod_i\T \G_i))).
\]
If we assume that  $\hat{\G} =\prodod_i{\hat{\G}_i}= \arg \inf_{\G \in
\GM} \; \varphi(\G \vect(\T))$ and $\ZM = \prod_i \T \hat{\G}_i $, then
$\MM = \prod_i \ZM \hat{\G}_i^{-1}$ can be regarded as a tensor decomposition induced from
the optimization problem.

In this section,  we particularly generalize the results in
Sections~\ref{sec:prelim} and \ref{sec:main} to tensors. In
Lemma~\ref{lem:subgroup}, we prove that we can use the subgroup relation to
induce a partial order of the infima of GOO.
We also show that
 GOO \wrt the special linear group finds the ``sparsest" Tucker-like
 decomposition of a tensor, and prove that GOO on tensor $\T$ is ``denser" than
 GOO on any matrix unfolded from $\T$.
We also prove Theorem~\ref{thm:unfolded-diagonal}, which says that if a tensor
can be decomposed into a core tensor with certain shape, then it is optimal. As
a consequence, we prove that not all tensors have superdiagonal form under a GOO
\wrt any matrix group.

\subsection{Subgroup Hierarchy}

First we observe the following partial order of infima of GOO induced from a
subgroup relation.

\begin{lemma}[Infima partial order from subgroup relation]\label{lem:subgroup}
If $\GM_1$ is a subgroup of $\GM_2$, then for any $\phi:\FB^{n_1\times
n_2\times\cdots n_k} \rightarrow \RB$:
\[\inf_{\G \in \GM_1} \phi(\G \M) \ge  \inf_{\G \in \GM_2} \phi(\G
\M)\textrm{.}\]
\end{lemma}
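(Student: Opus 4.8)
The plan is to recognize that this is a purely order-theoretic statement about infima, and that the group structure enters only through the set containment $\GM_1\subseteq\GM_2$. First I would record that a subgroup is in particular a subset: since $\GM_1$ is a subgroup of $\GM_2$, every element of $\GM_1$ lies in $\GM_2$, so $\GM_1\subseteq\GM_2$ as sets. No further group-theoretic property (closure under multiplication, inverses, presence of $\I$) is needed for the inequality itself; those are only relevant when one wants to \emph{attain} or interpret the optimum.

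Next I would introduce the real-valued function $h(\G)\eqdef\phi(\G\M)$ defined on all of $\GM_2$, so that the two sides of the claim read $\inf_{\G\in\GM_1}h(\G)$ and $\inf_{\G\in\GM_2}h(\G)$. Because $\GM_1\subseteq\GM_2$, the image collection $\{h(\G):\G\in\GM_1\}$ is contained in $\{h(\G):\G\in\GM_2\}$. The core step is then the standard monotonicity of the infimum under inclusion: the quantity $\inf_{\G\in\GM_2}h(\G)$ is by definition a lower bound for every value $h(\G)$ with $\G\in\GM_2$, hence in particular for every value with $\G\in\GM_1$; therefore it is a lower bound for the smaller collection, and since $\inf_{\G\in\GM_1}h(\G)$ is the greatest such lower bound we obtain $\inf_{\G\in\GM_1}h(\G)\ge\inf_{\G\in\GM_2}h(\G)$, which is exactly the claim.

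Honestly, there is no substantive obstacle here: the lemma is an immediate consequence of taking an infimum over a larger index set, and the inequality goes in the direction that the larger feasible set $\GM_2$ produces the smaller (or equal) infimum. The only points worth a sentence are to note that the argument remains valid even when the infimum equals $-\infty$ under the usual extended-real convention, and to flag that the same one-line reasoning applies verbatim whether $\G\M$ is read as the matrix action or as the tensor action $\prod_i\T\G_i$ through its vectorization, so no tensor-specific machinery is required.
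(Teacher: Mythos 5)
Your proposal is correct and follows essentially the same approach as the paper: the paper's proof likewise observes that a subgroup is in particular a subset of the larger group, so the infimum on the left is taken over a smaller feasible set and is therefore no smaller. Your additional remarks (extended-real convention, applicability to the vectorized tensor action) are sound elaborations of the same one-line argument.
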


\begin{proof}
As $\GM_1$ is a subgroup of $\GM_2$, the set of optimization variables of the left-hand side is
a subset of those of the right-hand side. Hence, the inequality holds.
\end{proof}

\begin{corollary}
For a matrix $\M$, we can construct an upper bound of the Schatten $p$-norm via
\[
\inf_{\G\in\UG}\|\G\M\|_p \ge \|\M\|_{*p}.
\]
\end{corollary}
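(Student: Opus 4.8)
The plan is to obtain this as an immediate consequence of the subgroup partial order in Lemma~\ref{lem:subgroup}, combined with the two-sided evaluation already established in Corollary~\ref{cor:schatten-ineq}. Since $\|\X\|_p=\|\vect(\X)\|_p$, I would first set $\varphi(\y)=\|\y\|_p$ and pass both the one-sided and the two-sided optimizations into vectorized form. Using the vectorization identity $\vect(\U\M\V^*)=(\conj{\V}\otimes\U)\vect(\M)$, the one-sided problem $\inf_{\G\in\UG}\|\G\M\|_p$ corresponds to the group $\GM_1=\{\I\otimes\U:\U\in\UG\}$ acting on $\vect(\M)$, whereas the two-sided problem $\inf_{\U,\V\in\UG}\|\U\M\V^*\|_p$ corresponds to $\GM_2=\{\conj{\V}\otimes\U:\U,\V\in\UG\}$.

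The key structural step is to verify that $\GM_1$ is a subgroup of $\GM_2$. Taking $\V=\I$ gives $\conj{\I}\otimes\U=\I\otimes\U$, so $\GM_1\subseteq\GM_2$; since both are groups (indeed unit groups by Lemma~\ref{lem:unit-group}), $\GM_1$ is a genuine subgroup of $\GM_2$. Lemma~\ref{lem:subgroup} then yields
\[
\inf_{\G\in\GM_1}\varphi(\G\vect(\M))\;\ge\;\inf_{\G\in\GM_2}\varphi(\G\vect(\M)),
\]
which translates back to $\inf_{\G\in\UG}\|\G\M\|_p\ge\inf_{\U,\V\in\UG}\|\U\M\V^*\|_p$. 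Finally, invoking Corollary~\ref{cor:schatten-ineq} (valid for $0<p<2$) identifies the right-hand infimum as $\|\M\|_{*p}$, which closes the argument.

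I do not expect a serious obstacle here, since every step is an application of an already-proved result; the only point requiring care is the Kronecker bookkeeping, namely getting the vectorization identity right and confirming that restricting to $\V=\I$ embeds $\GM_1$ as a subgroup rather than a mere subset. As a sanity check that also sidesteps the subgroup machinery, one can argue directly: for $0<p<2$, Corollary~\ref{cor:schatten-ineq} gives $\|\G\M\|_p\ge\|\G\M\|_{*p}$, and because a unitary $\G$ preserves singular values (indeed $(\G\M)^*(\G\M)=\M^*\M$) we have $\|\G\M\|_{*p}=\|\M\|_{*p}$; taking the infimum over $\G\in\UG$ reproduces the stated bound.
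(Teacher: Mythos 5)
Your proposal is correct and follows essentially the same route as the paper: vectorize, observe that $\{\I\otimes\U\}$ sits inside the two-sided group $\{\conj{\V}\otimes\U\}$ as a subgroup, apply Lemma~\ref{lem:subgroup}, and identify the two-sided infimum with $\|\M\|_{*p}$ via Corollary~\ref{cor:schatten-ineq} (you are right, and more careful than the paper, that this requires $0<p<2$). Your closing ``sanity check'' via unitary invariance of singular values, $(\G\M)^*(\G\M)=\M^*\M$, is a valid and even more elementary shortcut, but your main argument is the paper's proof.
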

\begin{proof}
\[\inf_{\G\in\UG}\|\G\M\|_p = \inf_{\G\in\UG}\|(\I\otimes\G)\vect(\M)\|_p
\ge \inf_{\G_1, \G_2\in\UG}\|(\G_2\otimes\G_1)\vect (\M) \|_p =
\|\M\|_{*p}.\]
\end{proof}


\begin{lemma}[GOO \wrt special linear group]\label{lem:special_linear}
The infimum of GOO \wrt the special linear group is the smallest among all
GOO \wrt a unit matrix group $\GM$ and the same $\phi$ for a tensor, that is, 
\begin{align}
\inf_{\prodod_i\G_i\in\GM} \phi(\prod_i \T \G_i)
\ge \inf_{\G_i \in \SLG(n_i)} \phi(\prod_i \T \G_i) \textrm{.}
\end{align}
\end{lemma}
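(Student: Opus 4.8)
The plan is to show that every value attained by the left-hand objective is also attained by the right-hand objective, so that the set of left-hand values is contained in the set of right-hand values and the infimum inequality follows. The only property of the cost function $\phi$ I will use is the one shared by all sparsifying cost functions of the form $\phi(\X)=\sum_I f(|x_I|)$ considered in this paper: such a $\phi$ depends only on the multiset of entrywise absolute values of its argument. Consequently $\phi$ is invariant under (i) multiplication of its argument by any scalar of modulus one, and (ii) any signed permutation of its entries, in particular under a mode-wise action $\Y\mapsto\Y\times_i Q_i$ by a signed permutation matrix $Q_i$.

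First I would reduce each factor to the special linear group. Fix any admissible tuple $(\G_i)$ with $\prodod_i\G_i\in\GM$; since $\GM$ is a unit group, $|\det(\G_i)|=1$ for every $i$. For each mode I choose a matrix $M_i$ with $\det(M_i)=\det(\G_i)$ whose mode-$i$ action is a $\phi$-symmetry, and set $S_i\eqdef M_i^{-1}\G_i$, so that $\det(S_i)=1$, i.e. $S_i\in\SLG(n_i)$. Using the identity $\T\times_i(M_iS_i)=(\T\times_i S_i)\times_i M_i$ (immediate from the definition of $\times_i$) together with the commutativity of mode products in distinct modes, I pull all the $M_i$ past the $S_i$ to obtain
\[
\prod_i\T\,\G_i=\Big(\prod_i\T\,S_i\Big)\times_1 M_1\times_2\cdots\times_k M_k.
\]
It then remains to check that each $M_i$ can be taken to act as a $\phi$-symmetry. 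When $\FB=\mathbb{C}$ I take $M_i=c_i\I$ with $c_i$ a unit-modulus $n_i$-th root of $\det(\G_i)$; its action multiplies the output tensor by the global scalar $\prod_i c_i$ of modulus one, which $\phi$ ignores by invariance (i). When $\FB=\RB$ we have $\det(\G_i)\in\{\pm1\}$, and I take $M_i$ to be a signed permutation matrix of the same determinant (for instance $\diag(-1,1,\dots,1)$ when $\det(\G_i)=-1$); its action signed-permutes the entries of the output tensor, which $\phi$ ignores by invariance (ii). In either case $\phi(\prod_i\T\,\G_i)=\phi(\prod_i\T\,S_i)$ with $S_i\in\SLG(n_i)$.

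Combining these observations, for every tuple feasible on the left there is a tuple in $\prod_i\SLG(n_i)$ attaining the same cost, so the left-hand infimum is at least the right-hand infimum. The step I expect to be the genuine obstacle is the real even-dimensional case with $\det(\G_i)=-1$: here no real scalar multiple of $\G_i$ lands in $\SLG(n_i)$, so the naive ``factor out a root of the determinant'' argument breaks down, and it is precisely invariance (ii) that repairs it. I also note that Lemma~\ref{lem:subgroup} cannot be applied directly, since a unit group such as $\OG(n)$ is in general neither a subgroup nor a supergroup of $\SLG(n)$; the result instead rests on the symmetry of $\phi$ rather than on any containment of groups.
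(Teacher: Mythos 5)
Your proposal is correct in substance, and it takes a genuinely different route from the paper's proof --- one that in fact repairs real gaps in it. The paper argues purely group-theoretically, with no property of $\phi$ used at all: it introduces the intermediate group $\{\prodod_i\Z_i:\Z_i\in\GLG(n_i),\ \det(\prodod_i\Z_i)=1\}$, invokes Lemma~\ref{lem:subgroup} to compare $\GM$ against it, and then rescales each factor to $\N_i=(\det\Z_i)^{-1/n_i}\Z_i\in\SLG(n_i)$, asserting $\prodod_i\N_i=\prodod_i\Z_i$ because both sides have determinant one. That argument glosses over exactly the cases you isolate: over $\RB$ a factor with $\det\G_i=-1$ and $n_i$ even has no real $n_i$-th root of its determinant; even over $\mathbb{C}$ the rescaling only yields $\prodod_i\N_i=c\,\prodod_i\Z_i$ with $c^{N}=1$, $N=\prod_i n_i$, not necessarily $c=1$; and a unit group can contain elements of determinant $-1$ (e.g.\ $\OG(3)\otimes\OG(2)$), so $\GM$ need not even embed in the intermediate group, undermining the subgroup step. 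Your mechanism --- peel off a unimodular scalar or a sign matrix $M_i$, absorb it by the invariance of $\phi$, and keep $S_i=M_i^{-1}\G_i\in\SLG(n_i)$ --- is what actually closes these cases. Moreover, your added hypothesis on $\phi$ is not a loss of generality but a necessity: for $\T=\I_2$ and $\GM=\OG(2)\otimes\OG(2)$, the $\GM$-orbit of $\T$ contains $\diag(1,-1)$, which cannot equal $S_1\T S_2^\top$ with $S_1,S_2\in\SLG(2)$ since such products have determinant one; a $\phi$ vanishing only at $\diag(1,-1)$ then violates the stated inequality. So the lemma is false for arbitrary $\phi$, and your restricted version (entrywise, absolute-value-dependent costs) is the one that is true and is also the only form used elsewhere in the paper, e.g.\ with $\nnz{\cdot}$ and the $\ell_p$ costs.

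One step in your write-up does need repair: from $\prodod_i\G_i\in\GM$ you cannot conclude $|\det(\G_i)|=1$ for each $i$, because the Kronecker factors are determined only up to scalars --- for instance $(2\I_2)\otimes(\tfrac12\I_2)=\I_4$ lies in every unit group while $|\det(2\I_2)|=4$. The fix is short and uses an idea you already employ: the objective depends only on the product, since $\vect(\prod_i\T\G_i)=(\prodod_i\G_i)\vect(\T)$, so you may re-normalize the factors to $\G_i'=|\det\G_i|^{-1/n_i}\G_i$. The product of these positive scalars is $1$ because $1=|\det(\prodod_i\G_i)|=\prod_i|\det\G_i|^{N/n_i}$ implies $\prod_i|\det\G_i|^{1/n_i}=1$; hence $\prodod_i\G_i'=\prodod_i\G_i$ and now $|\det\G_i'|=1$. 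With that emendation, your argument goes through.
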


\begin{proof}
First we note for $\A\in \mathbb{F}^{m\times m}$, $\B\in \mathbb{F}^{n\times
n}$, \[ \det(\A\otimes
\B)^{\frac{1}{mn}}=\det(\A)^{\frac1m}\det(\A)^{\frac1n} \textrm{.}\]
Let $\Z_i\in\GLG(n_i)$ and $\det(\prodod_i\Z_i)=1$. We can find
$\N_i\in\SLG(n_i)$ such that \[\N_i=(\det(\Z_i))^{-\frac1{n_i}}\Z_i\;.\]
Now as $\det(\prodod_i\Z_i)=\det(\prodod_i\N_i)=1$, we
have $\prodod_i\N_i=\prodod_i\Z_i$.
By Lemma~\ref{lem:subgroup}, we have
\[
\inf_{\prodod_i\G_i \in\GM} \phi(\prod_i \T \G_i)\ge \inf_{\Z_i\in\GLG(n_i),
\det(\prodod_i\Z_i)=1} \phi(\prod_i \T \Z_i) = \inf_{\G_i
\in \SLG(n_i)} \phi(\prod_i \T \G_i)\textrm{.}
\]

\end{proof}

Next we show that GOO gives a unified framework for matrix decomposition and
tensor decomposition. We can rewrite decomposition in Table~\ref{tab:group} in
tensor notation as \[\M =\prod_i \D \G_i \text{,}\]
which is induced \wrt a cost function $\phi$ by optimization
\[\inf_{\G\in\GM_i} \phi(\prod_i\M \G_i)\text{.}
\]
Now if there exists $\varphi$ s.t.\ $\phi = \varphi \circ \vect$,
we can generalize $\phi$ to tensor as
\[\tilde{\phi}(\T) = <\1,\, f(\T)>\text{,}
\]
where $\1$ is a tensor with all entries being $1$ and of the same dimension as
$\T$.

This inspires us to define a tensor version of the above optimization and
decomposition as below:
\[
\T=\prod_i \ZM\G_i \text{,}\]
and \[
\inf_{\G\in\GM_i} \tilde{\phi}(\prod_i\T\G_i)\text{.}
\]

In particular, if a matrix decomposition can be induced by entry-wise cost
function $\phi(\M) = \sum_{ij} f(m_{ij})$ \wrt some unit group, we can consistently
generalize the matrix decompositions to tensors using cost function
$\tilde{\phi}(\T) = <\1, f(\T)>$. In this case, there is an inequality relation
that follows from the Lemma~\ref{lem:subgroup}.

\begin{lemma}[Lifting lemma]
\label{lem:lifting}
We are given a tensor $\T$ and its arbitrary unfolding
$\fold^{-1}_I(\T)$ \wrt an index set grouping $I$.
Let $\{m_i\}$ and $\{n_j\}$ be the sizes of the square matrices before and
after grouping. Then we have
\[
\inf_{\M_i\in\GM(m_i)} \phi(\prod_i \T \M_i) \ge
\inf_{\N_j\in\GM(n_j)} \phi(\prod_j \fold^{-1}_I(\T) \N_j).
\]
\end{lemma}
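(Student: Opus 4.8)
The plan is to reduce both infima to the optimization of a single scalar functional over group actions on the common vector $\vect(\T)$, and then to read off the inequality from a subgroup relation via Lemma~\ref{lem:subgroup}. Throughout, the cost is the entry-wise functional $\tilde\phi(\mathcal A) = \langle \1, f(\mathcal A)\rangle$, so that $\phi(\mathcal A) = \varphi(\vect(\mathcal A))$ with $\varphi(\v) = \sum_s f(v_s)$; in particular $\phi$ is defined on tensors of any order and sees a tensor only through its vectorization, which is what allows us to compare the two sides even though $\T$ and $\fold^{-1}_I(\T)$ have different orders.

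First I would vectorize. Using $\vect(\prod_i \T \M_i) = (\prodod_i \M_i)\vect(\T)$, the left-hand infimum becomes $\inf_{\M_i \in \GM(m_i)} \varphi\big((\prodod_i \M_i)\vect(\T)\big)$. For the right-hand side I would invoke the defining identity $\vect(\fold^{-1}_I(\T)) = \vect(\T)$ of the unfolding, which gives $\vect(\prod_j \fold^{-1}_I(\T)\N_j) = (\prodod_j \N_j)\vect(\T)$, so the right-hand infimum becomes $\inf_{\N_j \in \GM(n_j)} \varphi\big((\prodod_j \N_j)\vect(\T)\big)$. Both sides are now exactly of the form $\inf_{\G \in H}\varphi(\G\,\vect(\T))$ treated in Lemma~\ref{lem:subgroup}, with $H_{\mathrm{fine}} = \{\prodod_i \M_i : \M_i \in \GM(m_i)\}$ on the left and $H_{\mathrm{coarse}} = \{\prodod_j \N_j : \N_j \in \GM(n_j)\}$ on the right; both are groups by Lemma~\ref{lem:unit-group}(ii) applied iteratively.

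The crux is the subgroup relation $H_{\mathrm{fine}} \subseteq H_{\mathrm{coarse}}$. Fix $\prodod_i \M_i = \M_k \otimes \cdots \otimes \M_1$ with $\M_t \in \GM(m_t)$, and for each group $I_j$ set $\N_j = \prodod_{t \in I_j}\M_t$, an $n_j \times n_j$ matrix since $n_j = \prod_{t\in I_j} m_t$. Because the identity $\vect(\fold^{-1}_I(\T)) = \vect(\T)$ forces the grouping to respect the ordering underlying $\prodod$, associativity of $\otimes$ gives $\prodod_i \M_i = \prodod_j \N_j$ with no intervening permutation. It remains to check $\N_j \in \GM(n_j)$, i.e. that the family $\GM$ is closed under Kronecker product in the sense $\GM(a)\otimes\GM(b) \subseteq \GM(ab)$. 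This is the one substantive step, and I expect it to be the main obstacle: it is \emph{not} merely Lemma~\ref{lem:unit-group}(ii), which only asserts that the set of Kronecker products is again a unit group, not that it sits inside the group $\GM(ab)$ of the combined dimension. Rather, it must be verified directly for the families appearing in Table~\ref{tab:group}: Kronecker products of unitary, respectively orthogonal, matrices are again unitary, respectively orthogonal, and $\det(\A\otimes\B) = \det(\A)^b\det(\B)^a = 1$ for $\A\in\SLG(a)$ and $\B\in\SLG(b)$. With this closure in hand, every element of $H_{\mathrm{fine}}$ lies in $H_{\mathrm{coarse}}$, so $H_{\mathrm{fine}}$ is a subgroup of $H_{\mathrm{coarse}}$, and Lemma~\ref{lem:subgroup} delivers $\inf_{H_{\mathrm{fine}}}\varphi \ge \inf_{H_{\mathrm{coarse}}}\varphi$, which is precisely the asserted inequality.
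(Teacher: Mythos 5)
Your proof is correct and follows essentially the same route as the paper's: vectorize both objectives via $\phi=\varphi\circ\vect$ and $\vect(\fold^{-1}_I(\T))=\vect(\T)$, observe that the fine-grained Kronecker group is a subgroup of the coarse-grained one, and invoke Lemma~\ref{lem:subgroup}. If anything you are more careful than the paper, which simply asserts that $\GM(a)\otimes\GM(b)$ is a subgroup of $\GM(a+b)$ (evidently a typo for $\GM(ab)$) without acknowledging, as you rightly do, that this closure property is not delivered by Lemma~\ref{lem:unit-group}(ii) and must be verified directly for the families in Table~\ref{tab:group}.
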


\begin{proof}
We note $\GM(a)\otimes\GM(b)$ is a subgroup of $\GM(a+b)$. Hence
\begin{align*}
\inf_{\M_i\in\GM(m_i)} \phi(\prod_i \T \M_i)
&= \inf_{\M_i\in\GM(m_i)} \phi((\prodod_i \M_i)\vect (\T)) \\
&\ge
\inf_{\N_j\in\GM(n_j)} \phi((\prodod_j\N_j) \vect (\T) )\\
&= \inf_{\N_j\in\GM(n_j)} \phi(\prod_j \fold^{-1}_I(\T) \N_j).
\end{align*}

\end{proof}

\subsection{An Upper Bound for Some Tensor Norms}
In the literature, there are multiple generalizations of the Schatten $p$-norm to
tensors.
For example, the tensor unfolding
trace norm \cite{Liu:2013:TCE:2412386.2412938, tomioka2010extension} is
defined as a weighted sum of the trace norm of single index unfoldings of the tensor; namely,
\begin{align}\label{eq:tensor-unfold-norm}
\sum_{i=1}^k \alpha_i \|\fold^{-1}_i \T\|_{*} \; ,
\end{align}
where $\alpha_i\ge 0$ and $\sum_i\alpha_i = 1$.

Another generalization as given in \cite{signoretto2010nuclear} is defined by
\begin{align}\label{eq:tensor-unfold-norm-2}
(\sum_{i=1}^k \frac1k \|\fold^{-1}_i \T\|_{*q}^p)^\frac{1}{p} \; .
\end{align}

These tensor norms are interesting as they correspond to the Schatten norm
of matrix. We next study use of the Lemma~\ref{lem:lifting} to construct an upper
bound for the two norms.

Formula~\ref{eq:tensor-unfold-norm} and Formula~\ref{eq:tensor-unfold-norm-2}
try to capture the tensor structure by considering all single-index unfoldings of the tensor. However, there are many unfoldings that
are not single index. In general, for a $k$th-order tensor, there are $2^{k-1}-2$
possible unfoldings, as in the following example.
\begin{example}
A \nth{3} order tensor has 3 unfoldings \wrt the following index set grouping:
$\{\{1\},\{2,3\}\},\{\{2\},\{1,3\}\},\{\{3\},\{1,2\}\}$.
Here $\{\{1\},\{2,3\}\}$ means putting index 1 of the tensor in the first dimension
of the unfolded matrix, and index 2 and 3 of the tensor in the second dimension of the unfolded
matrix.
Additionally, a \nth{4} th-order tensor has 6 unfoldings.
\end{example}

It turns out the following GOO that respects the tensor structure produces an
upper bound for the Schatten $p$-norm of the matrices unfolded from a tensor.

\begin{lemma}[Infimum of GOO \wrt the unitary group]
\label{lem:unitary-goo-infimum}
For $0\le p<2$, and for any index set grouping $J$, we have:
\[\inf_{\G_i\in\UG_i} \|\prod_i \T \G_i\|_p \ge  \inf_{\G_j\in\UG_j} \|\prod_j
(\fold^{-1}_J \T) \G_j\|_p\; . \]

Similarly for $p>2$, we have:
\[\sup_{\G_i\in\UG_i} \|\prod_i \T \G_i\|_p \le  \sup_{\G_j\in\UG_j} \|\prod_j
(\fold^{-1}_J \T) \G_j\|_p\; . \]
\end{lemma}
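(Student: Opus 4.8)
The plan is to obtain both inequalities as immediate instances of the Lifting Lemma (Lemma~\ref{lem:lifting}), applied with the group $\GM = \UG$ and the cost function $\phi(\X) = \|\X\|_p$. The first thing I would check is that $\|\cdot\|_p$ is an admissible cost function for Lemma~\ref{lem:lifting}: by definition the $\ell_p$-norm of a tensor equals the $\ell_p$-norm of any of its unfoldings, so it depends only on $\vect(\cdot)$ and is therefore invariant under the unfold map $\fold^{-1}_J$. Consequently $\|\prod_i \T \G_i\|_p$ and $\|\prod_j (\fold^{-1}_J \T) \G_j\|_p$ are exactly the two quantities produced by Lemma~\ref{lem:lifting} once the grouping $J$ here is identified with the grouping $I$ there. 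I would also confirm that $\UG$ satisfies the structural hypothesis exploited in the proof of Lemma~\ref{lem:lifting}, namely that the Kronecker product of two of its members lands in the larger unitary group: if $\G_a\in\UG(a)$ and $\G_b\in\UG(b)$ then $(\G_a\otimes\G_b)(\G_a\otimes\G_b)^* = (\G_a\G_a^*)\otimes(\G_b\G_b^*) = \I$, so $\UG(a)\otimes\UG(b)$ is a subgroup of $\UG(ab)$, which is the subgroup inclusion feeding Lemma~\ref{lem:subgroup}.

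For $0\le p<2$ I would simply instantiate Lemma~\ref{lem:lifting} with $\phi = \|\cdot\|_p$ and $\GM = \UG$, which yields directly
\[
\inf_{\G_i \in \UG_i} \|\prod_i \T \G_i\|_p \ge \inf_{\G_j \in \UG_j} \|\prod_j (\fold^{-1}_J \T) \G_j\|_p,
\]
the first claimed inequality.

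For $p>2$ I would apply Lemma~\ref{lem:lifting} with the cost function $-\|\cdot\|_p$ in place of $\|\cdot\|_p$, obtaining
\[
\inf_{\G_i \in \UG_i} \bigl(-\|\prod_i \T \G_i\|_p\bigr) \ge \inf_{\G_j \in \UG_j} \bigl(-\|\prod_j (\fold^{-1}_J \T) \G_j\|_p\bigr).
\]
Applying the identity $\inf_x(-h(x)) = -\sup_x h(x)$ to each side and then multiplying through by $-1$ (which reverses the inequality) converts this into the second claimed inequality for the suprema.

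The argument is essentially mechanical, so I do not expect a serious obstacle; the only points requiring genuine care are (i) verifying that $\|\cdot\|_p$ is consistently defined across tensor orders and invariant under unfolding, so that Lemma~\ref{lem:lifting} literally returns the displayed quantities, and (ii) the sign bookkeeping in the $p>2$ case. I would also remark that neither inequality in fact needs the restriction on $p$ — both hold for every $p\ge 0$ by the same argument — but the ranges $0\le p<2$ and $p>2$ are precisely the regimes in which the infimum (respectively supremum) over unitaries carries the Schatten-norm interpretation furnished by Corollary~\ref{cor:schatten-ineq}, which is the context that makes the bound meaningful.
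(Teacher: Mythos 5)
Your proof is correct and takes essentially the same route as the paper, which likewise derives both inequalities by applying Lemma~\ref{lem:lifting} to GOO with respect to the unitary group, using the cost function $\|\cdot\|_p$ when $0\le p<2$ and $-\|\cdot\|_p$ when $p>2$. The extra details you verify (invariance of the tensor $\ell_p$-norm under unfolding, the subgroup inclusion $\UG(a)\otimes\UG(b)\subseteq\UG(ab)$ that feeds Lemma~\ref{lem:subgroup}, and the sign bookkeeping converting the infimum bound into the supremum bound) merely make explicit what the paper's one-line proof leaves implicit.
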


\begin{proof}
The inequalities is obtained by applying Lemma~\ref{lem:lifting} to GOO \wrt
unitary group and $f(x)=\|x\|_p$ when $0\le p<2$ and $f(x)=-\|x\|_p$ when $p>2$.
\end{proof}

\begin{corollary}
For $0\le p<2$, we have
\[
\inf_{\G_i\in\UG_i} \|\prod_i \T \G_i\|_p \ge \sum_{i=1}^k \alpha_i
\|\fold^{-1}_i \T\|_{*} \; ,
\]
where $\alpha_i\ge 0$ and $\sum_i\alpha_i = 1$.
We also have:
\[
\inf_{\G_i\in\UG_i} \|\prod_i \T \G_i\|_p \ge
	(\sum_{i=1}^k \frac1k \|\fold^{-1}_i \T\|_{*p}^q)^\frac{1}{q}.
	\]
	
\end{corollary}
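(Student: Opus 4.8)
The plan is to reduce both inequalities to a single per-mode estimate obtained by specializing Lemma~\ref{lem:unitary-goo-infimum} to single-index unfoldings and then invoking Corollary~\ref{cor:schatten-ineq}. Concretely, I would fix a mode $i$ and apply Lemma~\ref{lem:unitary-goo-infimum} with the index set grouping $J=\{\{i\}, I\setminus\{i\}\}$, which gives
\[
\inf_{\G_i\in\UG_i} \|\prod_i \T \G_i\|_p \ge \inf_{\G_1,\G_2\in\UG} \|\prod_j (\fold^{-1}_i \T) \G_j\|_p.
\]
The right-hand side is a matrix GOO over the single-index unfolding $\fold^{-1}_i \T$ with two unitary factors, so Corollary~\ref{cor:schatten-ineq} identifies it with the Schatten $p$-norm. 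This yields the key bound
\[
\inf_{\G_i\in\UG_i} \|\prod_i \T \G_i\|_p \ge \|\fold^{-1}_i \T\|_{*p} \qquad \text{for every } i.
\]

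For the first inequality, write $L$ for the left-hand GOO infimum. Since the bound above holds for every $i$ and $\sum_i\alpha_i=1$ with $\alpha_i\ge 0$, I would take the convex combination $L=\sum_i\alpha_i L\ge \sum_i\alpha_i\|\fold^{-1}_i \T\|_{*p}$. It then remains to pass from the Schatten $p$-norm to the trace norm $\|\cdot\|_*=\|\cdot\|_{*1}$ using the non-increasing property of the $\ell_p$-norm of the singular value vector, i.e.\ $\|\fold^{-1}_i \T\|_{*p}\ge\|\fold^{-1}_i \T\|_{*1}$ in the relevant range of $p$. Combining gives $L\ge\sum_i\alpha_i\|\fold^{-1}_i \T\|_*$, which is the upper bound for Formula~\ref{eq:tensor-unfold-norm}.

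For the second inequality I would use the same per-mode bound differently: since $L\ge\|\fold^{-1}_i \T\|_{*p}$ for every $i$, in particular $L\ge\max_i\|\fold^{-1}_i \T\|_{*p}$. A weighted $\ell_q$ mean with uniform weights $\frac1k$ never exceeds the maximum of its entries, so
\[
\Big(\sum_{i=1}^k \frac1k \|\fold^{-1}_i \T\|_{*p}^q\Big)^{1/q}\le \max_i\|\fold^{-1}_i \T\|_{*p}\le L,
\]
which is exactly the claimed bound. The only genuinely delicate points are the bookkeeping in the first step --- verifying that the matrix GOO arising from the single-index unfolding $\prod_j (\fold^{-1}_i \T) \G_j=\G_1(\fold^{-1}_i \T)\G_2^\top$ is, after reparametrizing $\V^*=\G_2^\top$, exactly of the form $\inf_{\U,\V}\|\U\M\V^*\|_p$ covered by Corollary~\ref{cor:schatten-ineq} --- and, for the first inequality, keeping the monotonicity direction of $\|\cdot\|_{*p}$ relative to $\|\cdot\|_{*1}$ consistent with the admissible range of $p$. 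Everything else is a direct assembly of the cited lemmas, so I expect the proof to be short.
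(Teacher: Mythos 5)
Your proposal is essentially the paper's own proof: both arguments reduce to the per-mode bound $\inf_{\G_i\in\UG_i}\|\prod_i\T\G_i\|_p \ge \|\fold^{-1}_i\T\|_{*p}$ for every $i$ (via Lemma~\ref{lem:unitary-goo-infimum} applied to the single-index grouping, identified with the Schatten norm by Corollary~\ref{cor:schatten-ineq}), and then obtain the first inequality by a convex-combination bound and the second by comparing the uniform $\ell_q$-mean with the maximum. The one delicate point you flag is genuine, but it is shared with (and glossed over by) the paper: passing from $\|\fold^{-1}_i\T\|_{*p}$ to $\|\fold^{-1}_i\T\|_{*}=\|\fold^{-1}_i\T\|_{*1}$ uses that Schatten norms are non-increasing in the exponent, which requires $p\le 1$, so for $1<p<2$ neither your step nor the paper's unjustified relation $\max_i\|\fold^{-1}_i\T\|_{*p}\ge\sum_i\alpha_i\|\fold^{-1}_i\T\|_{*}$ holds in general (e.g.\ when all unfoldings share singular values $(1,1)$, $2^{1/p}<2$), whereas the second inequality, which stays at exponent $p$ throughout, is fine on the whole range $0\le p<2$.
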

\begin{proof}
By Lemma~\ref{lem:unitary-goo-infimum}, we have
\[
\inf_{\G_i\in\UG_i} \|\prod_i \T \G_i\|_p \ge \max_{1\le i\le k}
\inf_{\G_j\in\UG_j} \|\prod_j (\fold^{-1}_i \T) \G_j\|_p = \max_{1\le i\le k}
\|\fold^{-1}_i \T\|_{*p}\;.
\]
We prove the inequalities as the following relations hold:
\[
\max_{1\le i\le k} \|\fold^{-1}_i \T\|_{*p} \ge \sum_{i=1}^k \alpha_i \|\fold^{-1}_i
 \T\|_{*} \;,
\]
and
\[
\max_{1\le i\le k} \|\fold^{-1}_i \T\|_{*p} =
(\sum_{i=1}^k\frac1k\max_{1\le i\le k} \|\fold^{-1}_i
\T\|_{*p}^q)^\frac{1}{q} \ge
(\sum_{i=1}^k \frac1k \|\fold^{-1}_i
\T\|_{*p}^q)^\frac{1}{q}\;.
\]
\end{proof}

Due to the above inequality, an optimization that tries to minimize one of
tensor norms defined as in
Formula~\ref{eq:tensor-unfold-norm} and Formula~\ref{eq:tensor-unfold-norm-2} can have the tensor norm replaced by
$\inf_{\G_i\in\UG_i} \|\prod_i \T \G_i\|_p$, as minimizing upper bound of a
function $f$ can always minimize $f$.

\subsection{Sparse Structure in Tensor}

The tensor rank is defined as the minimum number of non-zero rank-1 tensors required to
sum up to $\T$, which is a generalization of the matrix rank. In the Tucker
decomposition $\T=\prod_i \XM\U_i$, one can define the Tucker rank \wrt the
different constraints on the $\U_i$. For example, for a real tensor,
when the $\U_i$ are required to be orthogonal, the number of nonzeros in $\XM$
is defined as a tensor strong orthogonal rank of $\T$ \cite{Kolda:2001:OTD:587708.587830}.
Trivially, the tensor rank is a lower bound of all the Tucker ranks.

Tensor decomposition has a large body of the literature
\cite{harshman1970foundations}
\cite{Zhang:2001:RAH:587704.587760}
\cite{Lathauwer:2000:BRR:354353.354405}
\cite{Lathauwer:2000:MSV:354353.354398}
\cite{Lathauwer:2005:CCD:1039893.1039944}
\cite{DeLathauwer:2008:DHT:1461964.1461967} \cite{ishteva2008dimensionality}, the
interested reader may refer to \cite{Kolda:2009:TDA:1655228.1655230} and the references therein.

We find that the strong orthogonal rank of tensor $\T$
is exactly the infimum of the following GOO problem
\[
\sorank\T = \inf_{\G_i\in\UG_i} \nnz{\prod_i \T \G_i}.
\]

\begin{corollary}
\label{cor:special-linear-best} We have
\[
\inf_{\det(\prodod_i\G_i)=1} \nnz{\prod_i \T \G_i}
\ge \inf_{\G_i \in \SLG(n_i)} \nnz{\prod_i \T \G_i} \ge \rk(\T) \textrm{.}
\]
\end{corollary}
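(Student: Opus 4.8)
The claim is a chain of two inequalities, and the plan is to treat each separately. For the first, $\inf_{\det(\prodod_i\G_i)=1} \nnz{\prod_i \T \G_i} \ge \inf_{\G_i \in \SLG(n_i)} \nnz{\prod_i \T \G_i}$, I would appeal directly to Lemma~\ref{lem:special_linear} with $\phi = \nnz{\cdot}$, taking the unit group to be $\GM = \{\prodod_i\G_i : \det(\prodod_i\G_i) = 1\}$; one first checks this $\GM$ is a unit group, using the mixed-product property $(\prodod_i\G_i)(\prodod_i\H_i) = \prodod_i(\G_i\H_i)$ for closure under products and inverses and $|\det| = 1$ by construction. Equivalently, I would reprove it by the scaling device of Lemma~\ref{lem:special_linear}: given a feasible tuple with $\det(\prodod_i\Z_i) = 1$, set $\N_i = (\det\Z_i)^{-1/n_i}\Z_i \in \SLG(n_i)$, so that $\prodod_i\N_i$ and $\prodod_i\Z_i$ differ only by a nonzero scalar; since $\nnz{\cdot}$ is invariant under multiplication by a nonzero scalar, $\nnz{\prod_i\T\N_i} = \nnz{\prod_i\T\Z_i}$, whence every value attained on the left is also attained on the right. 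This scale-invariance of $\nnz{\cdot}$ is what makes the argument airtight here, sidestepping any subtlety about whether $\prodod_i\N_i$ equals $\prodod_i\Z_i$ exactly.

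For the second inequality, $\inf_{\G_i \in \SLG(n_i)} \nnz{\prod_i \T \G_i} \ge \rk(\T)$, it suffices to show $\nnz{\prod_i\T\G_i} \ge \rk(\T)$ for each fixed tuple $\G_i \in \SLG(n_i)$ and then pass to the infimum. Writing $\ZM = \prod_i\T\G_i$, I would combine two observations. First, $\rk(\ZM) \le \nnz{\ZM}$: expanding $\ZM = \sum z_{i_1\cdots i_k}\,\mathbf{e}_{i_1}\otimes\cdots\otimes\mathbf{e}_{i_k}$ over its nonzero entries exhibits $\ZM$ as a sum of $\nnz{\ZM}$ rank-one tensors. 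Second, the tensor rank is invariant under the invertible multilinear action: since a rank-one tensor $\u_1\otimes\cdots\otimes\u_k$ is sent by $\prod_i(\cdot)\G_i$ to $(\G_1\u_1)\otimes\cdots\otimes(\G_k\u_k)$, applying this map to a minimal rank-one decomposition of $\T$ produces a decomposition of $\ZM$ of the same length, giving $\rk(\ZM) \le \rk(\T)$, and applying the inverse map $\prod_i(\cdot)\G_i^{-1}$ (available because each $\G_i\in\SLG(n_i)$ is invertible) yields the reverse inequality, so $\rk(\ZM) = \rk(\T)$. Chaining these gives $\rk(\T) = \rk(\ZM) \le \nnz{\ZM} = \nnz{\prod_i\T\G_i}$, and the infimum bound follows at once.

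The scaling step and the count $\rk(\ZM)\le\nnz{\ZM}$ are routine bookkeeping. The step I expect to require the most care, and which I regard as the crux, is the rank invariance $\rk(\prod_i\T\G_i) = \rk(\T)$: invertibility of the $\G_i$ must be used in \emph{both} directions, since only by upgrading the easy bound $\rk(\ZM)\le\rk(\T)$ to an equality does the chain close and deliver $\rk(\T)\le\nnz{\ZM}$. Everything else reduces to the mixed-product property of the Kronecker product and the scale-invariance of $\nnz{\cdot}$.
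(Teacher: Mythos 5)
Your proposal is correct and follows essentially the same route as the paper: the first inequality is exactly the paper's appeal to Lemma~\ref{lem:special_linear} with $\phi=\nnz{\cdot}$, and your second-inequality argument (expand the core $\ZM=\prod_i\T\G_i$ over its nonzero entries into rank-one tensors, then use invertibility of the $\G_i$ to transfer the decomposition back to $\T$) is precisely what the paper's one-line proof leaves implicit. The differences are only presentational: you prove the full equality $\rk(\prod_i\T\G_i)=\rk(\T)$ where the paper needs only the pullback direction $\rk(\T)\le\nnz{\prod_i\T\G_i}$, and your observation that scale-invariance of $\nnz{\cdot}$ absorbs the scalar ambiguity in the lemma's normalization device is a careful touch the paper glosses over.
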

\begin{proof}
The first inequality directly follows from Lemma~\ref{lem:special_linear} by
choosing $\phi=\nnz{\cdot}$.
For the second inequality, as the problem $\inf_{\G_i\in\GM_i} \nnz{\prod_i \T
\G_i}$ induces a tensor decomposition into $\nnz{\prod_i \T \hat{\G}_i}$
number of rank-1 tensors, by definition of the tensor rank we have the following inequality:
\[
\inf_{\G_i\in\GM_i} \nnz{\prod_i \T \G_i} \ge \rk(\T).
\]
\end{proof}

When $\phi=\nnz{\cdot}$, Lemma~\ref{lem:lifting} provides a link between
the rank of the tensor $\T$ and the ranks of the matrices or the vectors unfolded from $\T$.
For example, when $\GM=\UG$ and $\T\neq\0$, we have
\[
\inf_{\G\in\UG} \nnz{\G \vect (\T)} = 1.
\]
However, for the same $\T$ unfolded to a matrix $\fold^{-1}_I (\T)$, we have
\[
\inf_{\G_1\in\UG_1, \G_2\in\UG_2} \nnz{\G_1 \fold^{-1}_I(\T) \G_2} =
\rank(\fold^{-1}_I(\T)) \ge 1 = \inf_{\G\in\UG} \nnz{\G \vect (\T)}.
\]
Also, for the strong orthogonal rank of $\T$, we have
\[
\sorank\T = \inf_{\G_i\in\UG_i} \nnz{\prod_i \T\G_i} \ge
\rank(\fold^{-1}_I(\T)).
\]

Hence, intuitively, for a higher order tensor $\T$, we can only hope to find
decomposition with progressively ``denser" core than the matrices and the vectors
unfolded from $\T$. This can be describe more formally in the following lemma.

\begin{lemma}[Optimal core when unfoldable to optimal diagonal]
\label{lem:optimal-core}
If a tensor $\T$ admits a decomposition $\T=\prod_i \ZM \G_i$, where $\G_i\in
\GM_i$,
and there exists an index set grouping $J$ such that $\prodod_i\GM_i$ is
a sugroup of $\prodod_j\tilde{\GM}_j$, and
\[
\inf_{\prodod_j\tilde{\G}_j\in\prodod_j\tilde{\GM}_j}\varphi((\prodod_j
\tilde{\G}_j)\vect(\fold^{-1}_J(\ZM))) = \varphi(\vect(\fold^{-1}_J(\ZM))),
\]
then $\Z$ is the optimal sparse core in the following sense:
\[
\varphi(\vect(\ZM)) = \inf_{\G_i\in\GM_i}\varphi(\vect(\prod_i \T \G_i)) \; .
\]
\end{lemma}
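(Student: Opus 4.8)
The plan is to recognize the statement as a direct instance of the infimum criterion in Lemma~\ref{lem:criteria-infimum}, once its hypotheses have been translated into the vectorized picture. Write $\GM \eqdef \prodod_i\GM_i$ and $\tilde{\GM}\eqdef\prodod_j\tilde{\GM}_j$; by iterating Lemma~\ref{lem:unit-group}(ii) both are matrix groups acting on $\vect(\T)$ by left multiplication, and by hypothesis $\GM$ is a subgroup of $\tilde{\GM}$. The first step is to note that unfolding preserves vectorization, $\vect(\fold^{-1}_J(\ZM)) = \vect(\ZM)$, so the optimality hypothesis is purely a statement about the vector $\vect(\ZM)$:
\[
\inf_{\tilde{\G}\in\tilde{\GM}} \varphi(\tilde{\G}\,\vect(\ZM)) = \varphi(\vect(\ZM)).
\]

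From this I would extract the pointwise inequality needed to invoke Lemma~\ref{lem:criteria-infimum}. Since an infimum is a lower bound on every value in the orbit, the displayed equality forces $\varphi(\tilde{\G}\,\vect(\ZM)) \ge \varphi(\vect(\ZM))$ for every $\tilde{\G}\in\tilde{\GM}$. As $\GM\subseteq\tilde{\GM}$, this holds in particular for every $\G\in\GM$, giving
\[
\varphi(\G\,\vect(\ZM)) \ge \varphi(\vect(\ZM)),\qquad \forall\,\G\in\GM,
\]
which is precisely the first hypothesis of Lemma~\ref{lem:criteria-infimum}, applied with cost function $\varphi$, the point $\vect(\ZM)$ playing the role of $\D$, and group $\GM$.

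For the second hypothesis I would vectorize the assumed decomposition: $\T = \prod_i \ZM\,\G_i$ gives $\vect(\T) = (\prodod_i\G_i)\,\vect(\ZM)$, so $\A\eqdef\prodod_i\G_i\in\GM$ is an element carrying $\vect(\ZM)$ to $\vect(\T)$. Lemma~\ref{lem:criteria-infimum} then yields $\inf_{\G\in\GM}\varphi(\G\,\vect(\T)) = \varphi(\vect(\ZM))$. Rewriting the left-hand side through $\vect(\prod_i \T\,\G_i) = (\prodod_i\G_i)\vect(\T)$, and using that $\prodod_i\G_i$ sweeps out all of $\GM$ as the $\G_i$ range over the $\GM_i$, I arrive at $\inf_{\G_i\in\GM_i}\varphi(\vect(\prod_i \T\,\G_i)) = \varphi(\vect(\ZM))$, which is the assertion.

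The argument is short because its substance is front-loaded into the two hypotheses; I anticipate no genuine obstacle, only bookkeeping. The one place demanding care is the identification $\vect\circ\fold^{-1}_J = \vect$, which is exactly what lets the matrix-level optimality assumption over $\tilde{\GM}$ descend to the vector-level inequality over the subgroup $\GM$ (equivalently, this descent is the monotonicity of Lemma~\ref{lem:subgroup}); once it is in place, the coset invariance built into Lemma~\ref{lem:criteria-infimum} closes the proof.
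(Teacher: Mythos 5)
Your proposal is correct and follows essentially the same route as the paper's proof: both rest on the identity $\vect\circ\fold^{-1}_J = \vect$, the subgroup containment $\prodod_i\GM_i\subseteq\prodod_j\tilde{\GM}_j$, and the coset argument equating the $\GM$-orbit of $\vect(\T)$ with that of $\vect(\ZM)$. The only difference is packaging: you invoke Lemma~\ref{lem:criteria-infimum} explicitly (after extracting the pointwise inequality from the infimum hypothesis), whereas the paper inlines the same sandwich-and-coset reasoning directly.
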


\begin{proof}
We have
\begin{align*}
\varphi(\vect(\ZM)) \ge \inf_{\G_i\in\GM_i} \varphi((\prodod_i
\G_i)\vect(\ZM)) &\ge \inf_{\tilde{\G}_j\in\tilde{\GM}_j} \phi((\prodod_j
\tilde{\G}_j)\vect(\fold^{-1}_J(\ZM)))\\ &= \varphi(\vect(\fold^{-1}_J(\ZM))) =
\varphi(\vect(\ZM)).
\end{align*}
Hence,
\begin{align*}
\varphi(\vect(\ZM)) &=\inf_{\G_i\in\GM_i} \varphi((\prodod_i
\G_i)\vect(\ZM)) \\ & = \inf_{\G_i\in\GM_i} \varphi(\vect(\prod_i \ZM \G_i)) =
\inf_{\G_i\in\GM_i} \varphi(\vect(\prod_i \T \G_i)).
\end{align*}

\end{proof}

Applying Lemmas~\ref{lem:svd-cost},
and~\ref{lem:matrix-equivalence-cost} to Lemma~\ref{lem:optimal-core}, we immediately have the following theorem.

\begin{theorem}
\label{thm:unfolded-diagonal}
If a tensor $\T$ admits a decomposition $\T=\prod_i \ZM \G_i$, and there exists
an index set grouping $J$ such that $\fold^{-1}_J (\ZM)$ is of optimal shape
\wrt $f$ and $\GM$ in Table~\ref{tab:optimal-core}, then $\ZM$ is optimal
in the sense that
\[
\phi(\ZM) = \inf_{\G_i\in\GM_i} \phi(\prod_i \T\G_i).
\]
\end{theorem}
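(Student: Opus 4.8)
The plan is to deduce the theorem directly from Lemma~\ref{lem:optimal-core}, so the real work is to verify the two hypotheses of that lemma for each row of Table~\ref{tab:optimal-core}. Writing $\phi=\varphi\circ\vect$, I would first confirm the subgroup requirement: for the index set grouping $J$ that merges the modes of $\T$ into the two blocks of the unfolded matrix $\fold^{-1}_J(\ZM)$, the product group $\prodod_i\GM_i$ must embed in $\prodod_j\tilde{\GM}_j$. This is exactly the subgroup relation between the per-mode Kronecker product and the group over the merged dimension that underlies the proof of Lemma~\ref{lem:lifting}; merging modes according to $J$ therefore enlarges the per-mode product into a group containing $\prodod_i\GM_i$ as a subgroup.

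The second and main hypothesis is the matrix-level optimality identity
\[
\inf_{\prodod_j\tilde{\G}_j\in\prodod_j\tilde{\GM}_j}\varphi\bigl((\prodod_j\tilde{\G}_j)\vect(\fold^{-1}_J(\ZM))\bigr)=\varphi(\vect(\fold^{-1}_J(\ZM))).
\]
Here the grouping $J$ collapses the tensor GOO into an ordinary two-sided matrix GOO of the form $\inf_{\tilde{\G}_1,\tilde{\G}_2}\phi(\tilde{\G}_2\,\fold^{-1}_J(\ZM)\,\tilde{\G}_1^\top)$, which is exactly the setting of Section~\ref{subsec:diag-goo}. I would then match each entry of Table~\ref{tab:optimal-core} to the matrix lemma that certifies its stated shape as a minimizer: for the SVD-type cost, where $f(\sqrt{|x|})$ is strictly concave and subadditive and the group is orthogonal or unitary, Lemma~\ref{lem:svd-cost} gives $\phi(\tilde{\G}_2\D\tilde{\G}_1^\top)\ge\phi(\D)$ for pseudo-diagonal $\D$; for the matrix-equivalence cost, where additionally $f(\sqrt{e^x})$ is convex and the group is special linear, Lemma~\ref{lem:matrix-equivalence-cost} gives the analogous inequality for $\D=\lambda\I$. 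Feeding either inequality into Lemma~\ref{lem:criteria-infimum} (taking both the data matrix and the canonical form there to be $\fold^{-1}_J(\ZM)$, so that the required factor is $\I$) yields precisely the displayed identity.

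With both hypotheses in hand, Lemma~\ref{lem:optimal-core} immediately gives
\[
\varphi(\vect(\ZM))=\inf_{\G_i\in\GM_i}\varphi(\vect(\textstyle\prod_i\T\G_i)),
\]
which is the claimed $\phi(\ZM)=\inf_{\G_i\in\GM_i}\phi(\prod_i\T\G_i)$. I expect the main obstacle to be the case-by-case bookkeeping in the second step: one must check that the ``optimal shape'' recorded in Table~\ref{tab:optimal-core} coincides exactly with the minimizer produced by the matrix lemma for that $(f,\GM)$ pair, and in particular that the pseudo-diagonal-versus-diagonal ambiguity (the permutation $\BP$ appearing in the equality conditions of Lemmas~\ref{lem:svd-cost} and~\ref{lem:matrix-equivalence-cost}) does not change the value of $\varphi$, since $\varphi$ is invariant under such row permutations. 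Once that matching is confirmed for every row of the table, the theorem follows without further computation.
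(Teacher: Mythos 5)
Your proposal is correct and follows the paper's own route: the paper proves this theorem in one line by applying Lemmas~\ref{lem:svd-cost} and~\ref{lem:matrix-equivalence-cost} to Lemma~\ref{lem:optimal-core}, exactly as you do. Your additional bookkeeping (the subgroup embedding behind Lemma~\ref{lem:lifting}, the use of Lemma~\ref{lem:criteria-infimum} with identity factor, and the permutation-invariance of $\varphi$) simply makes explicit what the paper leaves implicit.
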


\begin{table}[!ht] \centering \small
\caption{Tensor decomposition generalized from matrix decomposition in the
GOO framework}
\begin{center}
\begin{tabular}{p{2.2cm} p{2.8cm} p{1.8cm} p{4.2cm}}
	\toprule
     Decomposition & Optimal core shape & Unit Group & Objective function
     \\
	\midrule Tensor SVD & unfoldable to some pseudo-diagonal matrix & $\UG$
    & $\sum_{ij} f(|x_{ij}|)$ where $f(\sqrt{x})$ is strictly
    concave, $f(0)\ge 0$\\
	\midrule Tensor Equivalence & unfoldable to $\lambda \I_n$ & $\SLG$
    & $\sum_{ij} f(|x_{ij}|)$ where $f(\sqrt{x})$ is
    strictly concave and increasing; $f(\sqrt{e^x})$ is convex; $f(0)\ge 0$\\
\hline
  \end{tabular}
\end{center} \label{tab:optimal-core}
\end{table}

If a tensor $\T$ admits a decomposition $\T=\prod_i \ZM \G_i$, where $\ZM$ is
superdiagonal, i.e., $z_{i_1, i_2, \cdots, i_n} \neq 0 \Rightarrow i_1 = i_2
=\cdots=i_n$, then by Theorem~\ref{thm:unfolded-diagonal}, $\ZM$ is optimal
under Tensor SVD as $\fold^{-1}_J (\ZM)$ is diagonal.
However, Theorem~\ref{thm:unfolded-diagonal} covers more cases than the
superdiagonal case, like the example below:
For example, consider the following \nth{4} order
tensor.

\begin{example}[Non-superdiagonalizable optimal tensor]
\label{example:non-superdiagonalizable} We consider \[\T\in\FB^{2\times 2\times
2\times 2},\; \vect{(\T)}=[{{{{1,0}, {0,1}}, {{0,0}, {0,0}}}, {{{0,0},
{0,0}}, {{1,0}, {0,1}}}}]^\top.\] We can unfold $\T$ to $\I_4$ with index set grouping $\{\{1,2\},\{3,4\}\}$.
Hence, $\T$ cannot be further ``sparsified" by GOO \wrt any matrix group,
even though it is not in superdiagonal form.
\end{example}

\begin{corollary}
There exist tensors that do not have a superdiagonal core under
any Tucker decomposition induced by GOO.
\end{corollary}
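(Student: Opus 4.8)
The plan is to exhibit the \nth{4} order tensor $\T\in\FB^{2\times2\times2\times2}$ of Example~\ref{example:non-superdiagonalizable} as the required witness and to argue that no GOO over any unit group can drive its core to superdiagonal form. First I would record the elementary counting fact: in a $2\times2\times2\times2$ tensor the only superdiagonal positions are $(1,1,1,1)$ and $(2,2,2,2)$, so every superdiagonal core $\ZM$ obeys $\nnz{\ZM}\le 2$. The whole argument then reduces to producing a strict lower bound larger than $2$ on the number of nonzeros of every core reachable from $\T$ by a Tucker decomposition induced by GOO.

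Next I would set up the reduction to the special linear group. Suppose, for contradiction, that $\T=\prod_i\ZM\G_i^{-1}$ with $\ZM$ superdiagonal and $\G_i\in\GM_i$ for some unit group $\GM=\prodod_i\GM_i$. Then $\ZM=\prod_i\T\G_i$ lies in the orbit of $\T$, so $\nnz{\ZM}\ge\inf_{\G_i'\in\GM_i}\nnz{\prod_i\T\G_i'}$. Invoking Lemma~\ref{lem:special_linear} with $\phi=\nnz{\cdot}$ bounds this infimum below by the corresponding special-linear infimum, giving
\[
\nnz{\ZM}\ge\inf_{\G_i\in\SLG(2)}\nnz{\textstyle\prod_i\T\G_i}.
\]
This is the step that replaces ``any unit group'' by the single group $\SLG$, so that it suffices to control the special-linear case.

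The key computation then uses the unfolding already identified in Example~\ref{example:non-superdiagonalizable}: with the index grouping $J=\{\{1,2\},\{3,4\}\}$ one has $\fold^{-1}_J(\T)=\I_4$. I would apply the Lifting Lemma (Lemma~\ref{lem:lifting}) with $\phi=\nnz{\cdot}$ and $\GM=\SLG$, noting that $\SLG(2)\otimes\SLG(2)$ is a subgroup of $\SLG(4)$ since $\det(\A\otimes\B)=\det(\A)^2\det(\B)^2=1$ for $\A,\B\in\SLG(2)$. This yields
\[
\inf_{\G_i\in\SLG(2)}\nnz{\textstyle\prod_i\T\G_i}\ge\inf_{\N_1,\N_2\in\SLG(4)}\nnz{\N_1\I_4\N_2^\top}=\inf_{\N\in\SLG(4)}\nnz{\N}=4,
\]
where the last equality holds because every element of $\SLG(4)$ is invertible and hence has at least four nonzero entries (its rank is $4$), the value $4$ being attained at $\I_4$. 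Chaining the bounds gives $\nnz{\ZM}\ge4>2$, contradicting superdiagonality and establishing the corollary.

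I expect the obstacle to be bookkeeping rather than conceptual. The points requiring care are: verifying that Lemma~\ref{lem:special_linear} and Lemma~\ref{lem:lifting} remain valid for $\phi=\nnz{\cdot}$ and for the special linear group (in particular the inclusion $\SLG(2)\otimes\SLG(2)\subseteq\SLG(4)$ and the matrix identity $\prod_j\fold^{-1}_J(\T)\N_j=\N_1\I_4\N_2^\top$); and ensuring the orbit-membership reduction is phrased for an \emph{arbitrary} unit group, so that the conclusion genuinely rules out superdiagonality under every GOO-induced Tucker decomposition, not merely under Tensor SVD or Tensor Equivalence.
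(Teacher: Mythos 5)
Your proposal is correct and takes essentially the same route as the paper: the same witness tensor from Example~\ref{example:non-superdiagonalizable}, the same reduction of an arbitrary unit group to $\SLG$ (the paper cites Corollary~\ref{cor:special-linear-best}, which rests on Lemma~\ref{lem:special_linear}), and the same use of the unfolding to $\I_4$ to force at least four nonzeros in any reachable core versus at most two in a superdiagonal one. The only difference is one of explicitness: where the paper asserts the value $4$ directly from Corollary~\ref{cor:special-linear-best} and the unfolding, you unroll that step via Lemma~\ref{lem:lifting} together with the elementary bound $\nnz{\N}\ge\rk(\N)=4$ for invertible $\N\in\SLG(4)$, which is a sound and slightly more self-contained justification of the same chain of inequalities.
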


\begin{proof}
The tensor $\T$ in Example~\ref{example:non-superdiagonalizable} can be unfolded to a
scaled identity matrix. Hence, by Corollary~\ref{cor:special-linear-best} we have
\[
\inf_{\det(\prodod_i\G_i)=1} \nnz{\prod_i \T \G_i}
\ge \inf_{\G_i \in \SLG(n_i)} \nnz{\prod_i \T \G_i} = 4.
\]
This means that a Tucker decomposition of $\T$ induced by a GOO will have at least
four non-zero elements in the core matrix. However, the superdiagonal core can
only have at most two non-zero elements. Hence,  $\T$ does not have a superdiagonal core under any
Tucker decomposition induced by GOO.
\end{proof}

It is known that the minimal rank tensor decomposition in the Tucker model is not
unique. For example, for tensor $\T\in\FB^{2\times 2\times 2}$, we have
\begin{align}
\vect(\T)=[{{{1, 0}, {0, 3}},{{0, 0}, {0, 2}}}]^\top \textrm{,}
\end{align}
and there exist unitary matrices $\U_i\in\UG$ so that
\begin{align}
\vect(\prod_i \T \U_i)\approx
[{{{3.6055, 0},{0, 0.8320}},{{0, 0},{0, -0.5547}}}]^\top \textrm{.}
\end{align}

This means that GOO by $\UG$ \wrt different $f(x)=\|x\|_p$ may lead to
different optimum values. Hence, the class of entry-wise cost functions $f(x)=\|x\|_p,
0\le p<2$,  may not be used to induce sparsity  when the optimal core tensor cannot be
unfolded to a diagonal matrix. In other words, in the matrix case, any $p$,
$\|x\|_p, 0\le p<2$ can be used to find the sparest core; however, for a tensor
with order larger than 3, only $f(x)=\nnz{x}$ can be used for finding the
``sparsest" core of $\T$ under GOO.
In practice, this may be done by the following asymptotic formulation:
\[
\inf_{\G_i\in\UG} \nnz{\prod_i\T\G_i} = \lim_{p\to 0} \inf_{\G_i\in\UG}
\|\prod_i\T\G_i\|_p.
\]

In Section~\ref{sec:exps} we will present several concrete examples.

\section{Data Normalization}
\label{sec:normal}

Data normalization seeks to eliminate some arbitrary degrees of freedom in data.
For example, when we are concerned with shape of an object, its attitude and
position in space will become irrelevant. Given a point clouds, which is a
sequence of coordinates of points, we demonstrate a method to
obtain a representation of point clouds that does not depend on its attitude and
position in this section. We craft the method as a special case of GOO
with some particular choice of group and cost function.

\subsection{Shape Analysis: Matching vs. Normalization}

Point cloud data arise when interest points are extracted from images. If there
are $k$ points, each with a $d$-tuple coordinate, a $k\times d$ matrix can be
formed to describe the object.

Given point cloud data describing an object, shape matching tries to find an
object of the closest shape within a candidate set of shapes under some measure.
The shape space method for shape matching works by matching two objects with
known point-to-point correspondence over given group orbits.
For example, if an object described by $\A$ is known to be a rotated version of
another known object $\B$, we can find out parameters describing the rotation by
the following optimization formulation:
\[ g(\A, \B) \eqdef \inf_{\R\in\UG(d)}\|\A\R - \B \| \textrm{.} \] If there
are $n$ candidates $\{\B_i\}_{i=1}^n$, then the best matching object can be
found by $\arg\min_i g(\A, \B_i)$.

However, to make the above method  work, a point correspondence procedure must
be established in the first place, which means that the same row of $\A$ and
$\B$ should refer to the same point. This meets difficulties in real world data
applications because
\begin{enumerate}
  \item[(1)] $\A$ and $\B$ may have different numbers of rows;
  \item[(2)] $\A$ and $\B$ may have many rows, leading to exponential number of
  possible correspondence.
\end{enumerate}

Here we present a method to match point cloud by using normalization to simplify
matching. The first step of the method is normalizing each of objects
$\{\A_i\}_{i=1}^m$ and $\{\B_i\}_{i=1}^n$ with following optimization:
\[\hat{\M} = \arginf_{\R\in\UG(d)} \phi(\M \R) \textrm{.} \] As in
Section~\ref{subsec:induced-matrix-decomp}, the above optimization leads to
following decompositions:
\[ \A_i = \hat{\A}_i \R_i\text{ and }\B_i = \hat{\B}_i \U_i\text{,} \] where
$\R_i,\U_i\in\GM_i$ for some group $\GM_i$.

The second step of the method carries out matching of objects $\{\A_i\}_{i=1}^m$
against $\{\B_i\}_{i=1}^n$ by using the normalized forms
$\{\hat{\A}_i\}_{i=1}^m$ and $\{\hat{\B}_i\}_{i=1}^n$. Matching between
$\{\hat{\A}_i\}_{i=1}^n$ and $\{\hat{\B}_i\}_{i=1}^n$ is expected to be simpler
because less degrees of freedom remain after normalization.

A well-known data normalization method is Principal Component Analysis (PCA),
which eliminates the following degrees of freedom: translation, scaling and
rotation.
As any rigid body movement can be expressed as combination of translation and
rotation, PCA provides a method to standardize data \wrt the rigid body
movement.
However, there may be other distortions of data. Thus, we discuss using general
group for normalizing point cloud data to eliminate the effect of non-rigid body
transforms. An illustrative example has been shown in
Figure~\ref{fig:special_linear} and Figure~\ref{fig:special_linear_3d} in
Section~1, where we see that normalized point clouds can be
matched by enumerating a small number of orientation.

\subsection{Normalization of Point Cloud Data by the Special Linear Group}

In this section we use group $\BP^\top\otimes\I_m, \BP\in\SLG(n)$ for
normalization of point cloud data.

Here we assume the distortions to the point
cloud data are of a few categories of degrees of freedom: including
mirroring, rotation, shearing and squeezing, which we seek to eliminate using
the special linear group orbit.

\begin{lemma} The
special linear group can represent any combination of mirroring, rotation,
shearing and squeezing operations for point cloud data.
\end{lemma}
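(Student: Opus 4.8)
The plan is to reduce the claim to two ingredients: first, exhibiting each of the four operations explicitly as a linear map on the $n$-dimensional coordinate space (recall that a point cloud is a $k\times n$ matrix whose coordinates are transformed by an $n\times n$ matrix, realized in the vectorized form as $\BP^\top\otimes\I_m$ with $\BP\in\SLG(n)$), and second, checking the determinant of each such matrix against the defining condition $\det(\BP)=1$ of $\SLG(n)$. Since the determinant is multiplicative, once every individual generator is shown to have determinant $1$, any finite composition of them automatically lies in $\SLG(n)$.

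First I would write down the generators explicitly. A rotation is an element of $\SOG(n)$, hence has determinant $1$ and lies in $\SLG(n)$ directly. A shear is a transvection $\I_n + s\,E_{ij}$ with $i\neq j$, where $E_{ij}$ denotes the matrix unit with a single $1$ in position $(i,j)$; expanding along column $i$ shows its determinant is $1$. A volume-preserving squeeze is $\diag(\lambda,\lambda^{-1},1,\dots,1)$ (or an orthogonal conjugate thereof), again of determinant $1$. Thus every combination drawn from rotations, shears and squeezes has determinant $1$ and is represented in $\SLG(n)$. For the converse richness — that $\SLG(n)$ is not larger than what these operations can reach — I would invoke the classical fact that the transvections $\I_n + s\,E_{ij}$ already generate the entire special linear group, so the shears alone recover all of $\SLG(n)$ and nothing is lost.

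The main obstacle is mirroring. A reflection such as $\diag(-1,1,\dots,1)$ has determinant $-1$, so it is not literally an element of $\SLG(n)$; this is the one place where a naive reading of the statement breaks down, and I expect the determinant bookkeeping here to be the crux of the write-up. I would resolve it in one of two complementary ways. Either I restrict attention to orientation-preserving compositions: any combination containing an \emph{even} number of reflections has determinant $+1$ and lands in $\SLG(n)$, while an odd number can be normalized by factoring out a single fixed reflection, which is exactly the residual discrete ambiguity (the four orientations and their mirror images) visible in Figure~\ref{fig:special_linear} and Figure~\ref{fig:special_linear_3d}. Alternatively, observing that the optimization only ever uses the weaker condition $|\det(\BP)|=1$, I would appeal to the unit group of Definition~\ref{def:unit-group} and note that $\SLG(n)$ together with one reflection generates the full determinant-$\pm1$ group; the continuous degrees of freedom to be eliminated are precisely those of $\SLG(n)$, with mirroring contributing only a harmless discrete factor that the normalization can absorb.
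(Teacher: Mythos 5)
Your proof is correct, but it runs in the opposite direction from the paper's. The paper argues top-down: it takes an arbitrary $\M\in\SLG(n)$, QR-decomposes it as $\M=\Q\D\U$ with $\Q$ orthogonal, $\D$ diagonal satisfying $|\det\D|=1$, and $\U\in\UUTG$, reads off $\Q$ as the rotation, $\D$ as the squeeze (``optionally with the mirror operation''), and $\U$ as the shear, and then invokes closure of $\SLG$ under multiplication to handle arbitrary compositions. You argue bottom-up: you exhibit each generator (rotation in $\SOG(n)$, transvection $\I_n+sE_{ij}$ as shear, volume-preserving diagonal squeeze), check each has determinant $1$, and use multiplicativity of the determinant to place every composition inside $\SLG(n)$; for the reverse containment you invoke the classical fact that transvections generate $\SLG(n)$, where the paper instead gets it from the QR factorization. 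The two routes buy different things. The paper's factorization is constructive and meshes with the QR-as-GOO machinery developed earlier in the paper, exhibiting every element of $\SLG(n)$ explicitly as a rotation--squeeze--shear product. Your argument is more elementary and, notably, more careful about mirroring: you isolate the genuine obstruction that a single reflection has determinant $-1$ and so is not an element of $\SLG(n)$, and you resolve it by a parity/coset argument (or by retreating to the unit-group condition $|\det\G|=1$), whereas the paper's proof absorbs the mirror into the diagonal factor without acknowledging that an odd number of mirrors, taken alone, is not representable in $\SLG(n)$ at all. On that point your write-up is the more rigorous of the two.
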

\begin{proof}
Every special linear matrix $\M$
can be QR decomposed as $\M = \Q \R$, and $\R$ can be decomposed into $\R = \D
\U$ where $\D$ is diagonal and $\U\in\UUTG$. Accordingly, we have a decomposition $\M =
\Q \D \U$,
where $\U$ models the shearing operation and $\Q$ models the
rotation. As $\det (\M) = 1$, $|\det \D| =
|\frac{\det \M}{\det \Q \det \U}| = 1$.
Hence, the diagonal matrix $\D$ is the squeezing operation (optionally with the
mirror operation). Hence the action of $\G\in\SLG$ applied to $\M$ is equivalent
to the sequential application of rotation, squeezing, mirroring, and shearing. Now as
the special linear group is a group, arbitrary composition of these operations
can still be represented as some $\G'\in\SLG$.
\end{proof}

We show that for some point clouds, the normalized form are exactly the
axis-aligned hypercubes.
\begin{lemma}
Given a matrix $\M\in\RB^{n\times d}$, if $\Poly{(\M)}$ is an axis-aligned hypercube and
 $\det{\G}=1$, then
\[
\|\M\G\|_\infty \ge \|\M\|_\infty.
\]
\end{lemma}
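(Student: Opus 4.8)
The plan is to translate the comparison between the two entrywise max-norms into a comparison of volumes, exploiting that $\det\G=1$ makes right-multiplication by $\G$ volume-preserving. First I would fix notation for the hypercube. Since $\Poly(\M)$ is an axis-aligned hypercube (centered at the origin, as the point cloud has been centered before normalization), it equals $[-a,a]^d$ for some half-side-length $a\ge 0$. Its $2^d$ vertices are exactly the rows of $\M$, and each of their coordinates is $+a$ or $-a$, so $\|\M\|_\infty=a$ and $\mu(\Poly(\M))=(2a)^d$.

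Next I would track what happens under $\G$. Right-multiplication $\x\mapsto\x\G$ is a linear map, and taking a convex hull commutes with linear maps, so $\Poly(\M\G)$ is the image of $\Poly(\M)$ under this map. Because Lebesgue measure scales by $|\det\G|$ and $\det\G=1$, I obtain $\mu(\Poly(\M\G))=|\det\G|\,\mu(\Poly(\M))=(2a)^d$; the volume is preserved.

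The crux is to bound this preserved volume from above using $\|\M\G\|_\infty$. Writing $M=\|\M\G\|_\infty$, every entry of $\M\G$ has absolute value at most $M$, so every row of $\M\G$ --- i.e.\ every vertex of $\Poly(\M\G)$ --- lies in the cube $[-M,M]^d$. Since that cube is convex, the whole hull satisfies $\Poly(\M\G)\subseteq[-M,M]^d$, whence $\mu(\Poly(\M\G))\le(2M)^d$. Combining this with the volume identity gives $(2a)^d\le(2M)^d$, that is $a\le M$, which is precisely $\|\M\|_\infty\le\|\M\G\|_\infty$.

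I expect the main obstacle to be the volume step rather than the bookkeeping: one must recognize that the determinant constraint is exactly what equates the two volumes, and that the entrywise norm only controls a bounding box, yielding an \emph{upper} bound on $\mu(\Poly(\M\G))$ that is compatible with the preserved volume. The remaining subtlety is the identification $\|\M\|_\infty=a$, which relies on the hypercube being centered at the origin; were it off-center, $\|\M\|_\infty$ would exceed $a$ and the pure volume bound would no longer suffice, so I would either invoke the centering built into the normalization pipeline or restrict the statement to the centered cube.
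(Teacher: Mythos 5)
Your proof is correct and follows essentially the same route as the paper's: use $\det\G=1$ to equate $\mu(\Poly(\M\G))$ with $\mu(\Poly(\M))$, then enclose $\Poly(\M\G)$ in the axis-aligned bounding cube of half-width $\|\M\G\|_\infty$ and compare volumes. The only difference is that you explicitly flag the origin-centering assumption (which is indeed necessary --- the claim fails for off-center cubes), whereas the paper leaves it implicit in its use of the identity $\mu(\Poly(\Y))=2^d\|\Y\|_\infty^d$.
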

\begin{proof}
First note that as $\det{(\G)}=1$, given a Lebesgue measure $\mu$, we
have:
\[
\mu(\Poly{(\M\G)}) = \mu(\Poly{(\M)})\textrm{.}
\]
We can construct a bounding box $\Poly{(\Z)}$ for $\Poly{(\M\G)}$ with center at the
origin and edge length $2\|\M\G\|_\infty$. Note that $\Poly{(\Z)}$ is also
a hypercube and $\|\Z\|_\infty=\|\M\G\|_\infty$. We thus have
\[
\mu(\Poly{(\Z)}) \ge \mu(\Poly{(\M\G)})=\mu(\Poly{(\M)})\textrm{.}
\]
Because for any axis-aligned hypercube $\Poly{(\Y)}$ we have
$\mu(\Poly{\Y})=2^d\|\Y\|_\infty^d$, the following holds:
\[
2^d\|\M\G\|_\infty^d=2^d\|\Z\|_\infty^d \ge 2^d\|\M\|_\infty^d \textrm{.}
\]
\end{proof}

By Lemma~\ref{lem:criteria-infimum}, we can prove the following corollary.
\begin{corollary}\label{thm:special-linear-normalize}
Let $\M\in\RB^{n\times k}$ be a matrix such that $\Poly(\M)$ can be transformed
by $\G\in\SLG(k)$ into a hypercube.
Then the following optimization problem attains its optimum when $\Poly(\M \hat{\G})$
is a hypercube:
\[
\inf_{\G\in\SLG(n)} \|\M \G\|_\infty
\]
\end{corollary}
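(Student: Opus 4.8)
The plan is to combine the preceding lemma with the infimum criterion of Lemma~\ref{lem:criteria-infimum}, adapted to right multiplication. Note first that since $\M\in\RB^{n\times k}$ is acted on from the right by $\M\mapsto\M\G$, the group that transforms the $k$ coordinates of the points is $\SLG(k)$; I write $\SLG(k)$ throughout (it agrees with the $\SLG(n)$ of the statement when $n=k$). By hypothesis there is $\A\in\SLG(k)$ with $\Poly(\M\A)$ a hypercube. A hypercube need not have its edges parallel to the axes, but any hypercube can be rotated into axis-aligned position by an orthogonal map, and $\SOG(k)\subseteq\SLG(k)$; so after replacing $\A$ by $\A\R$ for a suitable $\R\in\SOG(k)$ I may assume $\D\eqdef\M\A$ satisfies that $\Poly(\D)$ is an axis-aligned hypercube of the kind required by the preceding lemma. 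This also yields the factorization $\M=\D\A^{-1}$ with $\A^{-1}\in\SLG(k)$.

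Next I would apply the preceding lemma with $\D$ playing the role of its matrix: because $\Poly(\D)$ is an axis-aligned hypercube and $\det\G=1$ for every $\G\in\SLG(k)$, it yields
\[
\|\D\G\|_\infty\ge\|\D\|_\infty\qquad\text{for all }\G\in\SLG(k).
\]
This is precisely the monotonicity hypothesis $\phi(\D\G)\ge\phi(\D)$ demanded by the infimum criterion with $\phi=\|\cdot\|_\infty$. The only mismatch is that Lemma~\ref{lem:criteria-infimum} is phrased for left multiplication. I would remove this by re-running its one-line argument on the right: since left translation by $\A^{-1}$ is a bijection of the group, $\{\A^{-1}\G:\G\in\SLG(k)\}=\SLG(k)$, whence
\[
\inf_{\G\in\SLG(k)}\|\M\G\|_\infty=\inf_{\G\in\SLG(k)}\|\D\A^{-1}\G\|_\infty=\inf_{\G\in\SLG(k)}\|\D\G\|_\infty\ge\|\D\|_\infty,
\]
and the choice $\G=\I$ gives $\|\D\|_\infty\ge\inf_{\G\in\SLG(k)}\|\D\G\|_\infty$, so the infimum equals $\|\D\|_\infty=\|\M\A\|_\infty$. (Alternatively, since $\|\cdot\|_\infty$ is invariant under transpose and $\SLG(k)$ is closed under transposition, one can transpose $\M\G$ and quote Lemma~\ref{lem:criteria-infimum} verbatim.)

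Finally, the value $\|\D\|_\infty$ is attained at $\hat\G=\A$, and there $\Poly(\M\hat\G)=\Poly(\D)$ is a hypercube, which is exactly the claim. I expect the only non-routine points to be the right-versus-left adaptation of the criterion, which is settled by the coset identity $\A^{-1}\SLG(k)=\SLG(k)$, and the reduction to the axis-aligned case needed to invoke the preceding lemma, which is settled by a rotation $\R\in\SOG(k)$; everything else is immediate from the preceding lemma.
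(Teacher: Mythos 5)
Your proof is correct and takes essentially the same route as the paper, whose entire argument is the remark ``By Lemma~\ref{lem:criteria-infimum}, we can prove the following corollary,'' i.e., combine the preceding axis-aligned-hypercube lemma with the infimum criterion. The details you supply---adapting the criterion to right multiplication via the coset identity, rotating by $\R\in\SOG(k)$ to reduce a general hypercube to the axis-aligned case, and flagging the $\SLG(n)$ versus $\SLG(k)$ mismatch in the statement---are precisely the steps the paper leaves implicit.
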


\begin{remark}
The optimization problem in Theorem~\ref{thm:special-linear-normalize} is not convex.
For example, in $\RB^2$, a square can be rotated by 90 degrees, 180 degrees,
and 270 degrees while still being a square. Nevertheless, the degree of freedom associated with rotation,
squeezing and shearing described by the special linear group is reduced to
only one of four configurations. The three other optimal $\hat{\G}$ can be
enumerated when one optimal $\G$ is known.
\end{remark}

\begin{remark}
Optimality of $\inf_{\G\in\SLG(2)} \|\M \G\|_\infty$ depends
on whether $\Poly(\M \G)$ is a parallelogram. In practice, we
find the above method works well in normalizing general point data, especially
for those arise in shape recognition. In
Section~\ref{sec:exps} we will present several concrete examples.
\end{remark}

\section{Numerical Algorithm and Examples}
\label{sec:exps}
\subsection{Algorithm}\label{subsec:algorithm}
Most of optimizations involved in this paper are constrained optimization
problem of the following form:
\begin{align}\label{opt:constrained-group}
\inf_{\G\in\GM} \; \varphi(\G \vect(\M)) \text{,}\end{align}
where $\GM$ is a unit group. When $\GM$ is a Lie group, alternatively we
can turn the above optimization to another constrained optimization:
\[ \inf_{\G \in \gm} \; \varphi(\exp(\G) \vect(\M))\text{,} \] where $\exp(\G)$
is matrix exponential of matrix $\G$ and $\gm$ is the Lie algebra associated with
Lie group $\GM$. This formulation may have constraints that are easier to encode
in numeric software. For example, in \[ \inf_{\G_1\in\SOG(m), \G_2\in\SOG(n)}
\varphi(\vect(\G_2 \M \G_1^\top)) \text{,}\] we have
$\GM=\SOG(m)\otimes\SOG(n)$, $\gm = \{\Z_2\oplus\Z_1:
\Z_1\in\FB^{n\times n}, \Z_1 + \Z_1^\top=0, \Z_2\in\FB^{m\times m},
\Z_2 + \Z_2^\top=0\}$.
Hence, we can turn the optimization into a
constrained optimization over $\gm$ as
\[\inf_{\Z\in\gm}\varphi(\exp(\Z)\vect(\M))\text{.}\]
Moreover, in this particular case, we can turn the above optimization into an
unconstrained optimization:\[ \inf_{\G_1\in\FB^{n\times n}, \G_2\in\FB^{m\times m}}
\varphi(\vect(\exp(\G_2-\G_2^\top) \M\exp(\G_1-\G_1^\top)^\top)) \text{.}\]

In Table~\ref{tab:lie-group-encoding} we list a few more cases when the
constrained optimization of Formula~\ref{opt:constrained-group} can be turned
into an unconstrained optimization.

\begin{table}[!ht] \centering \small
\caption{Encoding of constraints for Lie groups}
\begin{center}
\begin{tabular}{p{2cm} p{4cm} p{4cm}}
    \toprule
     Lie group & Lie algebra & Encoding of Constraint \\
    \midrule $\SOG$ & $\{\Z: \Z+\Z^\top=0\}$ & $\Z = \X - \X^\top$\\
    \midrule $\LUTG$ & $\{\Z: \dg\Z=\0, \Z\in\LUTG\}$ & $\Z = \X \odot
    [{\mathbb I}_{i > j}]$\\
    \midrule $\UUTG$ & $\{\Z: \dg\Z=\0, \Z\in\UUTG\}$ & $\Z = \X \odot
    [{\mathbb I}_{i < j}]$\\
    \midrule $\SLG$ & $\{\Z: \tr\Z=0\}$ & $\Z = \X - (\tr \X) \v \v^\top$,
    where $\v=[1, 0, 0, \ldots, 0]^\top$\\
    \midrule $\GM_1\otimes \GM_2$ & $\gm_1\oplus\gm_2$ & \\
\hline
  \end{tabular}
\end{center} \label{tab:lie-group-encoding}
\end{table}

The exponential mapping used for
optimization over Lie groups is related to other optimization on manifold
methods \cite{udriste1994convex} \cite{edelman1998geometry} \cite{absil2009optimization}.

In this section all numerical optimizations are solved by Nelder-Meld heuristic
global optimization algorithm \cite{nelder1965simplex} implemented in Mathematica\texttrademark\
9.0.0, unless noted.

\subsection{GOO Inducing Matrix Decomposition}

We empirically illustrate several examples of GOO inducing matrix decomposition.
Due to the large amount of computation required by Nelder-Meld algorithm, here
we only give a few examples involving small matrices.

\begin{example}[Compute SVD of a $3\times 3$ real matrix] \label{exp:svd}
Given a matrix $\M$:
\[
\M\approx\left[
\begin{array}{ccc}
 0.17658 & 0.517888 & 0.448587 \\
 0.214066 & 0.718154 & 0.849892 \\
 0.796042 & 0.197801 & 0.233489
\end{array}
\right],
\]
the SVD of $\M = \U \Lam \V^\top$ is given as
\[
\U \approx
\left[
\begin{array}{ccc}
 -0.483076 & -0.175226 & -0.857865 \\
 -0.768129 & -0.385453 & 0.511276 \\
 -0.420256 & 0.905937 & 0.0516068
\end{array}
\right],
\]
\[
\Lam\approx \left[
\begin{array}{ccc}
 1.43557 & 0. & 0. \\
 0. & 0.66535 & 0. \\
 0. & 0. & 0.0910448
\end{array}
\right],
\]
\[
\V\approx \left[
\begin{array}{ccc}
 -0.406999 & 0.913369 & -0.0104708 \\
 -0.616441 & -0.28311 & -0.734745 \\
 -0.674057 & -0.292586 & 0.678263
\end{array}
\right].
\]

We now use the  optimization problem:
\[\inf_{\U, \V\in\SOG(3)} \|\U^\top\M\V\|_1\]
to find the
SVD of $\M$. A numerical solution, produced by the heuristic global
optimization, is given as
\[
\hat{\U}\approx\left[
\begin{array}{ccc}
 0.483076 & -0.857865 & -0.175227 \\
 0.768129 & 0.511277 & -0.385453 \\
 0.420256 & 0.0516062 & 0.905937
\end{array}
\right],
\]
\[
\hat{\U}^\top\M\hat{\V} \approx \left[
\begin{array}{ccc}
 9.16351\times 10^{-10} & -8.52198\times 10^{-9} & {\bf 1.43557} \\
 -4.58791\times 10^{-7} & {\bf -9.10448\times 10^{-2}} & 4.36886\times 10^{-9} \\
{\bf 6.6535\times 10^{-1}} & 8.13893\times 10^{-8} & 2.67077\times 10^{-10} \\
\end{array}
\right],
\]
\[
\hat{\V}\approx\left[
\begin{array}{ccc}
 0.913369 & 0.010471 & 0.406999 \\
 -0.28311 & 0.734745 & 0.616441 \\
 -0.292585 & -0.678263 & 0.674057 \\
\end{array}
\right].
\]

Note that $\hat{\U}$, $\hat{\V}$, $\hat{\U}^\top\M\hat{\V}$ are permuted
approximations of $\U$, $\V$, $\Lam$ modulo sign, respectively.

\end{example}
\begin{example}[Compute QR of a $3\times 3$ matrix] We use the same $\M$ as in
Example~\ref{exp:svd}. QR decomposition of $\M$ is given by $\M = \Q\D\R$ where
\[\Q\approx
\left[
\begin{array}{ccc}
 -0.20946 & -0.541716 & -0.814046 \\
 -0.253927 & -0.773817 & 0.580283 \\
 -0.944271 & 0.328254 & 0.0245274 \\
\end{array}
\right],
\]
\[\D\approx\left[
\begin{array}{ccc}
 -0.843023 & 0. & 0. \\
 0. & -0.771339 & 0. \\
 0. & 0. & 0.133735 \\
\end{array}
\right],\]
\[\R\approx\left[
\begin{array}{ccc}
 1. & 0.566548 & 0.628984 \\
 0. & 1. & 1.0683 \\
 0. & 0. & 1. \\
\end{array}
\right].\]

We use  optimization of the form
 \[\inf_{\Q\in\SOG(3),\R\in\UUTG(3)}
\|\Q^\top\M\R^{-1}\|_1\]
to find QR of $\M$. A numerical solution, produced
by the heuristic global optimization, is given  as
\[\hat{\Q}\approx
\left[
\begin{array}{ccc}
 0.814046 & 0.541716 & 0.20946 \\
 -0.580283 & 0.773817 & 0.253927 \\
 -0.0245274 & -0.328254 & 0.944271 \\
\end{array}
\right],
\]
\[\hat{\Q}^\top\M\hat{\R}^{-1}\approx
\left[
\begin{array}{ccc}
 4.65594\times 10^{-9} & -7.03107\times 10^{-9} & \bf -1.33735\times 10^{-1} \\
 2.32509\times 10^{-8} & \bf 7.71339\times 10^{-1} & 6.05164\times 10^{-9} \\
 \bf 8.43023\times 10^{-1} & 5.62326\times 10^{-3} & 3.42317\times 10^{-9} \\
\end{array}
\right],
\]
\[\hat{\R}\approx
\left[
\begin{array}{ccc}
 1 & 0.559878 & 0.621858 \\
 0 & 1 & 1.0683 \\
 0 & 0 & 1 \\
\end{array}
\right].
\]
Note that $\hat{\Q}$, $\hat{\Q}^\top\M\hat{\R}^{-1}$, and $\hat{\R}$ are
permuted approximations of $\Q$, $\D$, $\R$ modulo sign,
respectively.
Note although there is a significant difference between $\R$ and $\hat{\R}$ as
$\|\R-\hat{\R}\|_F\approx0.00976074$, the decomposition is still good
approximation as we have \[\|\hat{\Q}\hat{\D}\hat{\R} -
\M\|_F\approx3.01503\times 10^{-16}\text{.}\]
\end{example}

\begin{example}[Compute matrix equivalence decomposition of a $3\times 3$
matrix] We use the same $\M$ as in Example~\ref{exp:svd}. Matrix equivalence
decomposition of $\M$ is not unique. Anyway the optimal core modulo sign
and permutation would be \[\D\approx
\left[
\begin{array}{ccc}
 0.44304 & 0. & 0. \\
 0. & 0.44304 & 0. \\
 0. & 0. & 0.44304
\end{array}
\right].
\]

We use the optimization \[\inf_{\A\in\SLG(3),\B\in\SLG(3)}
\|\A^{-1}\M\B^{-1}\|_1\] to find the matrix equivalence decomposition of $\M$. A
numerical solution produced by the heuristic global optimization is given as
\[\hat{\A}^{-1}\M \hat{\B}^{-1}\approx
\left[
\begin{array}{ccc}
 2.00925\times 10^{-9} & {\bf 4.42891\times 10^{-1}} & 2.77141\times 10^{-9} \\
 1.18223\times 10^{-9} & 2.10486\times 10^{-8} & {\bf 4.43117\times 10^{-1}} \\
 {\bf 4.43112\times 10^{-1}} & -7.24341\times 10^{-9} & 1.79685\times 10^{-10} \\
\end{array}
\right].
\]

\end{example}

\begin{example}[Compute LU of a $3\times 3$ matrix] We use the same $\M$ as in
Example~\ref{exp:svd}. The LU decomposition of $\M$ without pivoting is given by
\[\L\approx
\left[
\begin{array}{ccc}
 1 & 0 & 0 \\
 1.21229 & 1 & 0 \\
 4.50812 & -2.36585\times 10^1 & 1 \\
\end{array}
\right],
\]
\[\U\approx
\left[
\begin{array}{ccc}
 1.7658\times 10^{-1} & 5.17888\times 10^{-1} & 4.48587\times 10^{-1} \\
 2.77556\times 10^{-17} & 9.03226\times 10^{-2} & 3.06073\times 10^{-1} \\
 1.11022\times 10^{-16} & 0. & 5.45245 \\
\end{array}
\right]
\textrm{.}\]

We use the optimization \[\inf_{\L\in\LUTG(3)}
\|\Delta \odot (\L^{-1}\M)\|_1\] to find the LU of $\M$, where
$\Delta_{ij}={\mathbb I}_{i>j}$. A numerical solution produced
by the heuristic global optimization is given as
\[\hat{\L}\approx
\left[
\begin{array}{ccc}
 1 & 0 & 0 \\
 1.21229 & 1 & 0 \\
 4.50812 & -2.36585\times 10^1 & 1 \\
\end{array}
\right]
\textrm{,}\]
\[\hat{\U}\approx
\left[
\begin{array}{ccc}
 1.7658\times 10^{-1} & 5.17888\times 10^{-1} & 4.48587\times 10^{-1} \\
 -1.64336\times 10^{-17} & 9.03226\times 10^{-2} & 3.06073\times 10^{-1} \\
 2.22045\times 10^{-16} & 4.44089\times 10^{-16} & 5.45245 \\
\end{array}
\right]
\textrm{.}\]

\end{example}

\begin{example}[Compute Cholesky decomposition of a $3\times 3$ matrix] We use
the $\M^\top\M$ as input with $\M$ from Example~\ref{exp:svd}. The Cholesky
decomposition of $\M^\top\M$ is given by $\M^\top\M = \U^*\U$ where
\[\U^*\approx
\left[
\begin{array}{ccc}
 0.843023 & 0. & 0. \\
 0.477613 & 0.771339 & 0. \\
 0.530248 & 0.824024 & 0.133735 \\
\end{array}
\right]
\textrm{.}
\]

We use the optimization \[\inf_{\L\in\LUTG(3)}
\|\Delta \odot (\L^{-1}\M)\|_1\] to find the LU of $\M$, where
$\Delta_{ij}={\mathbb I}_{i>j}$. A numerical solution
is given as
\[\hat{\L}\boldsymbol\Lambda\approx
\left[
\begin{array}{ccc}
 0.843023 & 0. & 0. \\
 0.477613 & 0.771339 & 0. \\
 0.530248 & 0.824024 & 0.133735 \\
\end{array}
\right].
\]
Here $\boldsymbol\Lambda$ is a diagonal matrix with square root of
diagonals of $\hat{\L}^{-1}\M$ as its diagonals.

\end{example}

\begin{example}[Compute Schur decomposition of a $3\times 3$ matrix] We use the
same $\M$ as in Example~\ref{exp:svd}. The Schur decomposition of $\M = \Q\U\Q^{-1}$
is given by
\[\Re\Q\approx
\left[
\begin{array}{ccc}
 -4.4809\times 10^{-1} & 2.13496\times 10^{-2} & -3.11518\times 10^{-1} \\
 -6.81147\times 10^{-1} & -5.00762\times 10^{-1} & 1.85125\times 10^{-3} \\
 -4.25133\times 10^{-1} & 7.79816\times 10^{-1} & 3.25373\times 10^{-1} \\
\end{array}
\right]
\textrm{,}\]
\[\Im\Q\approx
\left[
\begin{array}{ccc}
 -1.91558\times 10^{-1} & 2.76329\times 10^{-1} & -7.67245\times 10^{-1} \\
 -2.91189\times 10^{-1} & -2.4227\times 10^{-2} & 4.47096\times 10^{-1} \\
 -1.81744\times 10^{-1} & -2.52434\times 10^{-1} & 9.23392\times 10^{-2} \\
\end{array}
\right]
\textrm{,}\]
\[\Re\U\approx
\left[
\begin{array}{ccc}
 1.38943 & -2.5768\times 10^{-1} & -1.15903\times 10^{-1} \\
 0. & -1.30605\times 10^{-1} & -7.89372\times 10^{-2} \\
 0. & 0. & -1.30605\times 10^{-1}
\end{array}
\right]
\textrm{,}\]
\[\Im\U\approx
\left[
\begin{array}{ccc}
 -1.38778\times 10^{-16} & 2.11604\times 10^{-1} & 3.88987\times 10^{-2} \\
 0. & 2.13379\times 10^{-1} & -5.69018\times 10^{-1} \\
 0. & 0. & -2.13379\times 10^{-1} \\
\end{array}
\right]
\textrm{,}\]

We can use the optimization \[\inf_{\Q\in\UG(3)}
\|\Delta\odot(\Q^{-1}\M\Q)\|_1\] to find the Schur decomposition of $\M$, where
$\Delta_{ij}={\mathbb I}_{i>j}$. A numerical solution  is given as
\[\Re\hat{\Q}\approx
\left[
\begin{array}{ccc}
 -0.159122 & 0.456745 & -0.924103 \\
 -0.697698 & 0.596808 & 0.46622 \\
 0.777452 & 0.665151 & 0.227028
\end{array}
\right]
\textrm{,}\]
\[\Im\hat{\Q}\approx
\left[
\begin{array}{ccc}
 -0.288006 & -0.0686255 & 0.0156732 \\
 0.161731 & 0.0500731 & 0.177932 \\
 0.0861939 & 0.00219548 & -0.301602
\end{array}
\right]
\textrm{,}\]
\[\Re\hat{\U}\approx
\left[
\begin{array}{ccc}
 \bf -1.30605\times 10^{-1} & -3.71633\times 10^{-1} & -7.07291\times 10^{-1} \\
 -2.09852\times 10^{-9} & \bf 1.38943 & -1.09975\times 10^{-1} \\
 -1.39859\times 10^{-8} & 9.97677\times 10^{-9} & \bf -1.30604\times 10^{-1}
\end{array}
\right]
\textrm{,}\]
\[\Im\hat{\U}\approx
\left[
\begin{array}{ccc}
\bf -2.13379\times 10^{-1} & -1.95892\times 10^{-2} & 2.66849\times 10^{-2} \\
 2.82526\times 10^{-9} & \bf 2.10889\times 10^{-9} & -1.05424\times 10^{-1} \\
 9.09511\times 10^{-9} & 8.28696\times 10^{-9} & \bf 2.13379\times 10^{-1} \\
\end{array}
\right]
\textrm{.}\]
We note that $\hat{\U}$ is permuted approximation of $\U$ modulo sign.

\end{example}
\subsection{GOO Inducing Tensor Decomposition}

We empirically illustrate several examples of GOO inducing tensor decomposition.
Due to the large amount of computation required by the Nelder-Meld algorithm, here
we only give  examples involving small-size tensors.

\begin{example}[Non-uniqueness of strong-orthogonal decomposition]
\label{ex:non-unique-strong-orthogonal-decomp}
Consider tensor $\AM$ as in Example 3.3 of \cite{Kolda:2001:OTD:587708.587830},
which we reproduce below:
\begin{align}\label{eq:kolda-example-3.3}
\AM=\sigma_1 \a\otimes \b\otimes \b+\sigma_2 \b\otimes \b\otimes \b+\sigma_3
\a\otimes \a\otimes \a\text{,}
\end{align}
where $\sigma_1>\sigma_2>\sigma_3, \a\perp \b, \|\a\|=\|\b\|=1$.

We note that Formula~(\ref{eq:kolda-example-3.3}) is already a strong orthogonal
decomposition of $\AM$. Nevertheless, an alternative strong orthogonal
decomposition is given therein as
\begin{align}\label{eq:kolda-example-3.3-alt}
\AM=\hat{\sigma}_1
\hat{\a}\otimes\b\otimes\b+\hat{\sigma}_2\hat{\a}\otimes\a\otimes\a+\hat{\sigma}_3\hat{\b}\otimes\a\otimes\a
\text{,}\end{align}
where
\[
\hat{\sigma}_1 = \sqrt{\sigma_1^2+\sigma_2^2},\qquad
\hat{\sigma}_2=\frac{\sigma_1\sigma_3}{\hat{\sigma}_2},\qquad\hat{\sigma}_3=\frac{\sigma_2\sigma_3}{\hat{\sigma}_1}\text{,}
\]
\[
\hat{\a} = \frac{\sigma_1 \a + \sigma_2 \b}{\hat{\sigma}_1},\text{ and }\;
\hat{\b} = \frac{\sigma_2 \a - \sigma_1 \b}{\hat{\sigma}_1}\text{.}
\]

Without loss of generality, we let $\sigma_1 =3, \sigma_2 = 2, \sigma_3 = 1,
\a=[1,0]^\top$, and $\b=[0,1]^\top$. Then
\[
\vect{(\AM)} = [1,0,0,0,0,0,3,2]^\top\text{,}
\]
\[
\hat{\sigma}_1\approx3.60555,\qquad\hat{\sigma}_2\approx0.832050,\qquad\hat{\sigma}_3\approx
0.5547002.
\]
In framework of GOO, we can induce a strong orthogonal decomposition of tensor
$\AM$ by the following optimization:
\[
\inf_{\G_1\in \SOG(2), \G_2\in \SOG(2), \G_3\in \SOG(2)} \|\AM \times_1
\G_1\times_2 \G_2\times_3 \G_3\|_1 \text{.}
\]

One numerical solution of core tensor is:
\[\vect(\AM\times_1\hat{\G}_1\times_2\hat{\G}_2\times_3\hat{\G}_3) \approx
\left[
\begin{array}{c}
 \bf 3.60555 \\
 1.24142\times 10^{-8} \\
 1.92366\times 10^{-10} \\
 -2.91798\times 10^{-9} \\
 -1.80012\times 10^{-8} \\
 -6.43786\times 10^{-10} \\
 \bf 8.3205\times 10^{-1} \\
 \bf -5.547\times 10^{-1} \\
\end{array}
\right]
\text{.}
\]

Note that the large nonzero values (in bold) are approximations of $\hat{\sigma}_1$,
$\hat{\sigma}_2$, and $\hat{\sigma}_3$, modulo sign.
\end{example}

\begin{example}[The Special Linear Group finds Sparser Core in Tensor
Decomposition] $\AM$ is  given as in
Example~\ref{ex:non-unique-strong-orthogonal-decomp}. We
can induce a ``sparser'' decomposition of tensor with the following GOO:
\[
\inf_{\G_1\in \SLG(2), \G_2\in \SLG(2), \G_3\in \SLG(2)} \|\AM \times_1
\G_1\times_2 \G_2\times_3 \G_3\|_1 \text{.}
\]
One numerical solution of core tensor is:
\[\vect(\AM\times_1\hat{\G}_1\times_2\hat{\G}_2\times_3\hat{\G}_3) \approx
\left[
\begin{array}{c}
 \bf 1.41421 \\
 -5.7745\times 10^{-9} \\
 -9.00468\times 10^{-9} \\
 1.49267\times 10^{-9} \\
 -4.79358\times 10^{-9} \\
 -8.18472\times 10^{-9} \\
 -2.80551\times 10^{-9} \\
 \bf 1.41421 \\
\end{array}
\right]
\text{.}
\]

Note that there are only two significant nonzero values (in bold), in contrast to
three in the strong orthogonal decomposition. Since
$\AM\times_1\hat{\G}_1\times_2\hat{\G}_2\times_3\hat{\G}_3$ is superdiagonal, it
is the ``sparsest'' core tensor under any Tucker decompositions.
\end{example}

\begin{example}[A tensor that does not have Superdiagonal Form but is also of
Lowest Rank under any Tucker
Decomposition]
We give a numerical solution to Example~\ref{example:non-superdiagonalizable}
where
\[\T\in\FB^{2\times 2\times
2\times 2},\; \vect{(\T)}=[{{{{1,0}, {0,1}}, {{0,0}, {0,0}}}, {{{0,0},
{0,0}}, {{1,0}, {0,1}}}}]^\top.\]

The solution to
\[
\inf_{\G_1\in \SLG(2), \G_2\in \SLG(2), \G_3\in \SLG(2),\G_4\in\SLG(2)} \|\AM
\times_1 \G_1\times_2 \G_2\times_3 \G_3\times_4\G_4\|_1
\]
is
\[\vect(\TM\times_1\hat{\G}_1\times_2\hat{\G}_2\times_3\hat{\G}_3\times_4\hat{\G}_4)
\approx \left[
\begin{array}{c}
 \bf 1. \\
 5.85196\times 10^{-9} \\
 2.40735\times 10^{-10} \\
 \bf -1. \\
 -2.39741\times 10^{-9} \\
 -1.40295\times 10^{-17} \\
 -5.77138\times 10^{-19} \\
 2.3974\times 10^{-9} \\
 4.74159\times 10^{-9} \\
 2.77475\times 10^{-17} \\
 1.14146\times 10^{-18} \\
 -4.74158\times 10^{-9} \\
 \bf 9.99999\times 10^{-1} \\
 5.85194\times 10^{-9} \\
 2.40734\times 10^{-10} \\
 \bf -9.99998\times 10^{-1} \\
\end{array}
\right]\text{.}
\]
Hence there are four significant nonzero values even under GOO \wrt the special
linear group.
\end{example}

\subsection{Normalization of point cloud \wrt special linear group}
Here we apply the GOO defined in Section~\ref{sec:normal} to a publicly
available set of 2D point cloud data
\href{http://vision.lems.brown.edu/content/available-software-and-databases/#Datasets-Shape}{here}.

As the optimization variable only consists of a small matrix
$\M\in\FB^{2\times 2}$, we are able to deal with large point clouds consisting
of more than thousands of points.

The detailed steps are  as follows:
\begin{algorithm}\label{alg:special-linear}
\begin{enumerate}
  \item[Step 1] Normalize the point cloud corresponding to $\M$ \wrt special linear
  group as \[\hat{\M} = \arginf_{\G\in\SLG(n)} \|\M \G\|_\infty\text{.}\]
  \item[Step 2] (Optional) Let $\hat{\M}_x$ and $\hat{\M}_y$ be two columns of
  $\hat{\M}$. We can use a simple criterion to select one from four possible forms of normalized
  point clouds: $[\hat{\M}_x, \hat{\M}_y]$, $[-\hat{\M}_y, \hat{\M}_x]$,
  $[-\hat{\M}_x, -\hat{\M}_y]$, and $[\hat{\M}_y, -\hat{\M}_x]$ to further
  eliminate ambiguity.
  An example  is to pick the matrix $\hat{\hat{\M}}$ that minimizes $\phi(\X) = \|g(\X)\|_F$
  where $g(x) = \max(0, x)$. $\hat{\hat{\M}}$ is called a canonical form of $\M$
  in this section.
\end{enumerate}
\end{algorithm}
\begin{remark}
Step~2 in Algorithm~\ref{alg:special-linear} is found to be useful in
eliminating the ambiguity in orientation in some circumstances. However, even if
Step~2 fails or is skipped, one can still use $\hat{\M}$ as ``canonical''
form and enumerate the few number of possible orientations. The result of
Algorithm~\ref{alg:special-linear} without Step~2 is shown in
Figure~\ref{fig:special_linear} and Figure~\ref{fig:special_linear_3d}.
\end{remark}

In Figure~\ref{fig:normal_form} we perform a side-by-side comparison of results
of several normalization techniques.
The point clouds in the row marked with ``Distorted'' are produced by applying
random shearing, mirroring, squeezing and rotation to the same point cloud. The point
clouds in the row marked with ``PCA'' are results of applying PCA to the
matrices corresponding to the distorted point clouds in the ``Distorted'' row. It
can be seen that PCA can remove the degree of freedom corresponding to rotation in the input data, but fails to
remove effect of squeezing and shearing. The row marked with ``GOO\_SO'' is
produced by using GOO with orthogonal group:
\[
\inf_{\G\in\OG(n)} \|\M \G\|_\infty\text{.}\]
We can see that effect of
rotation is removed but effects of squeezing and shearing remain. The row
marked with ``GOO\_SL'' is the canonical forms of matrices corresponding
to point clouds derived by Algorithm~\ref{alg:special-linear}. We can see that
the normalized point clouds are approximately the same, and effects of rotation, squeezing and shearing are
almost completely eliminated.

\begin{figure}
\subfigtopskip = 0pt
\begin{center}
\centering
\subfigure{\includegraphics[height=64mm,
width=130mm]{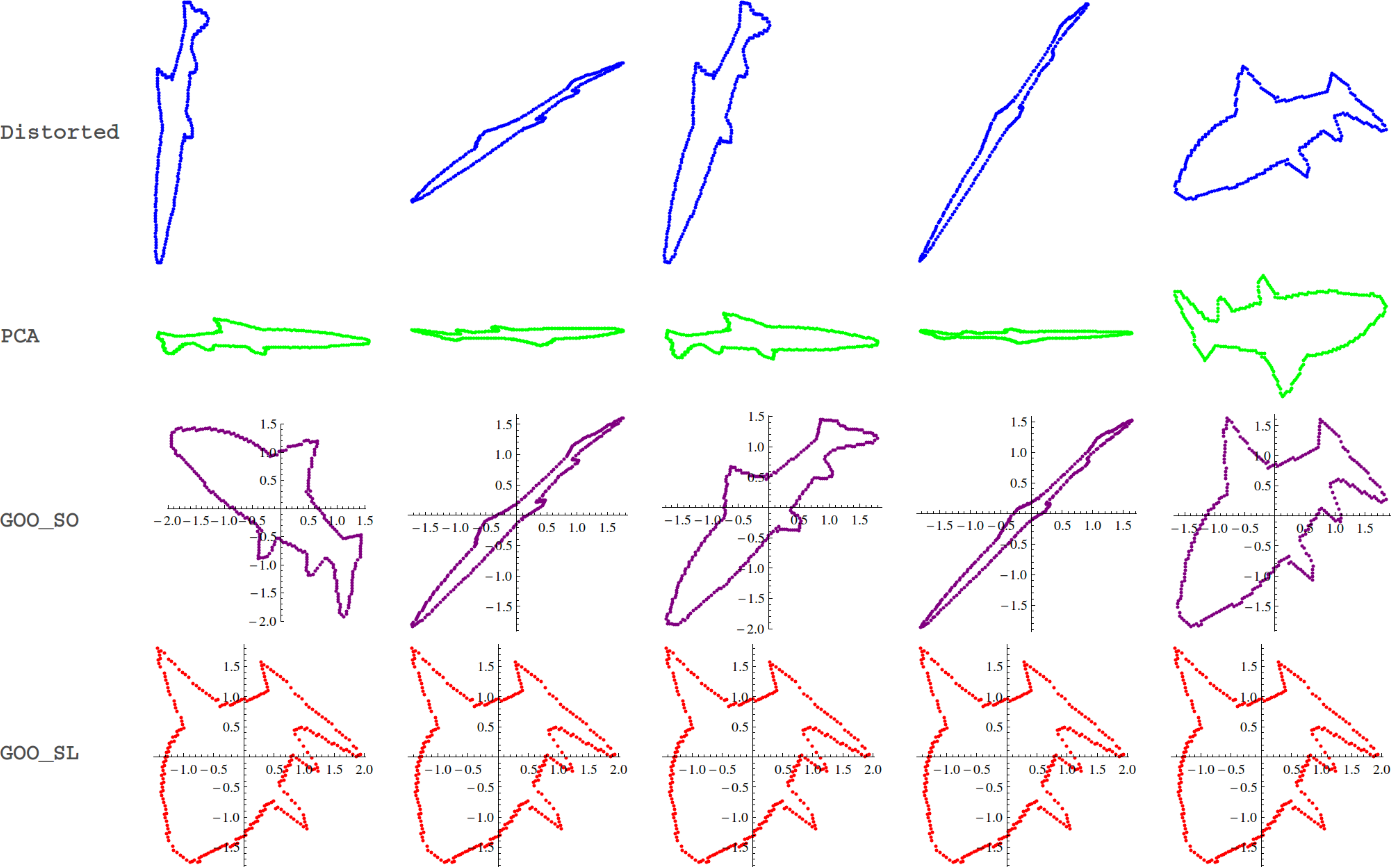}}
\subfigure{\includegraphics[height=64mm,
width=130mm]{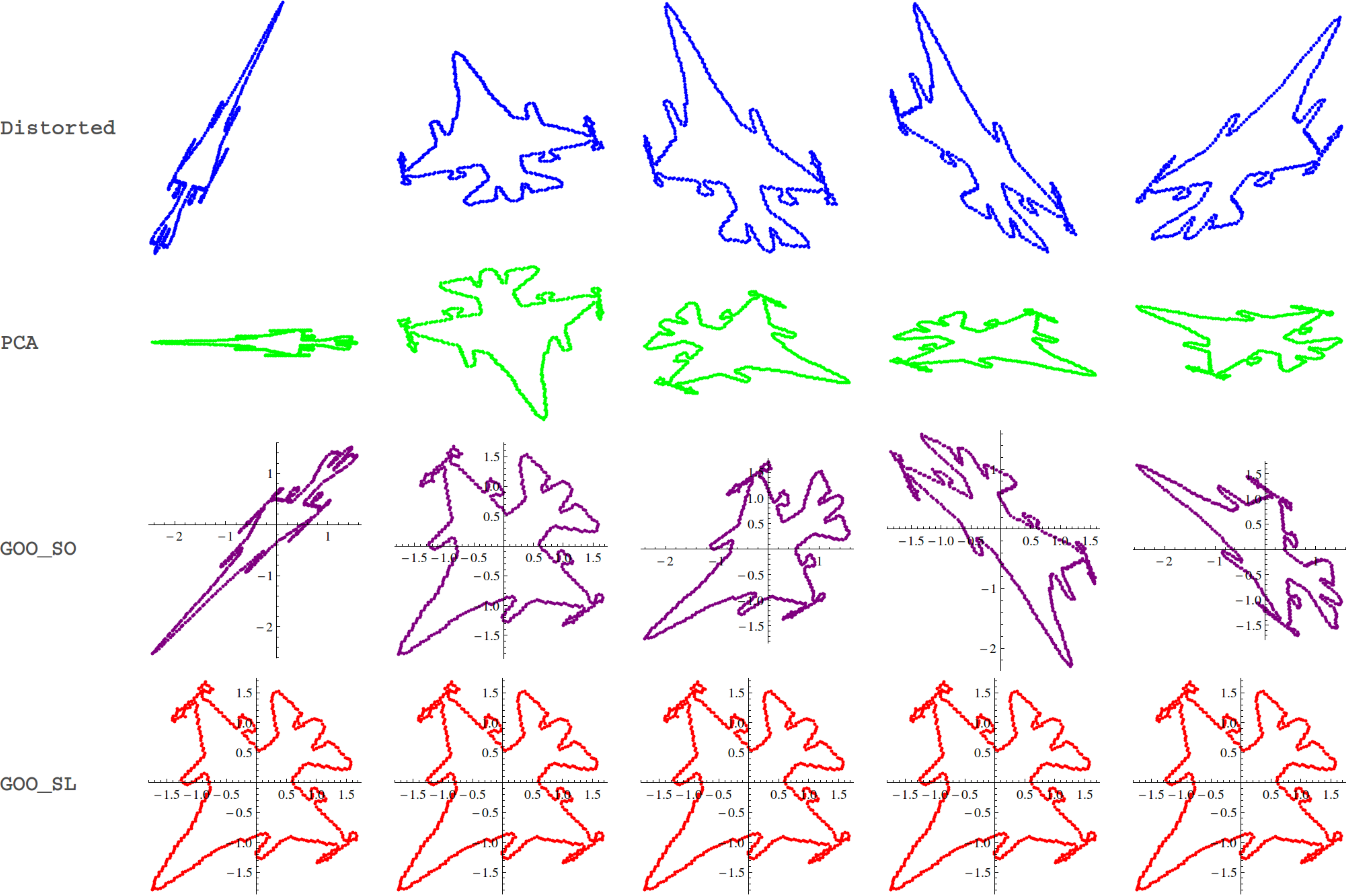}}
\subfigure{\includegraphics[height=64mm,
width=130mm]{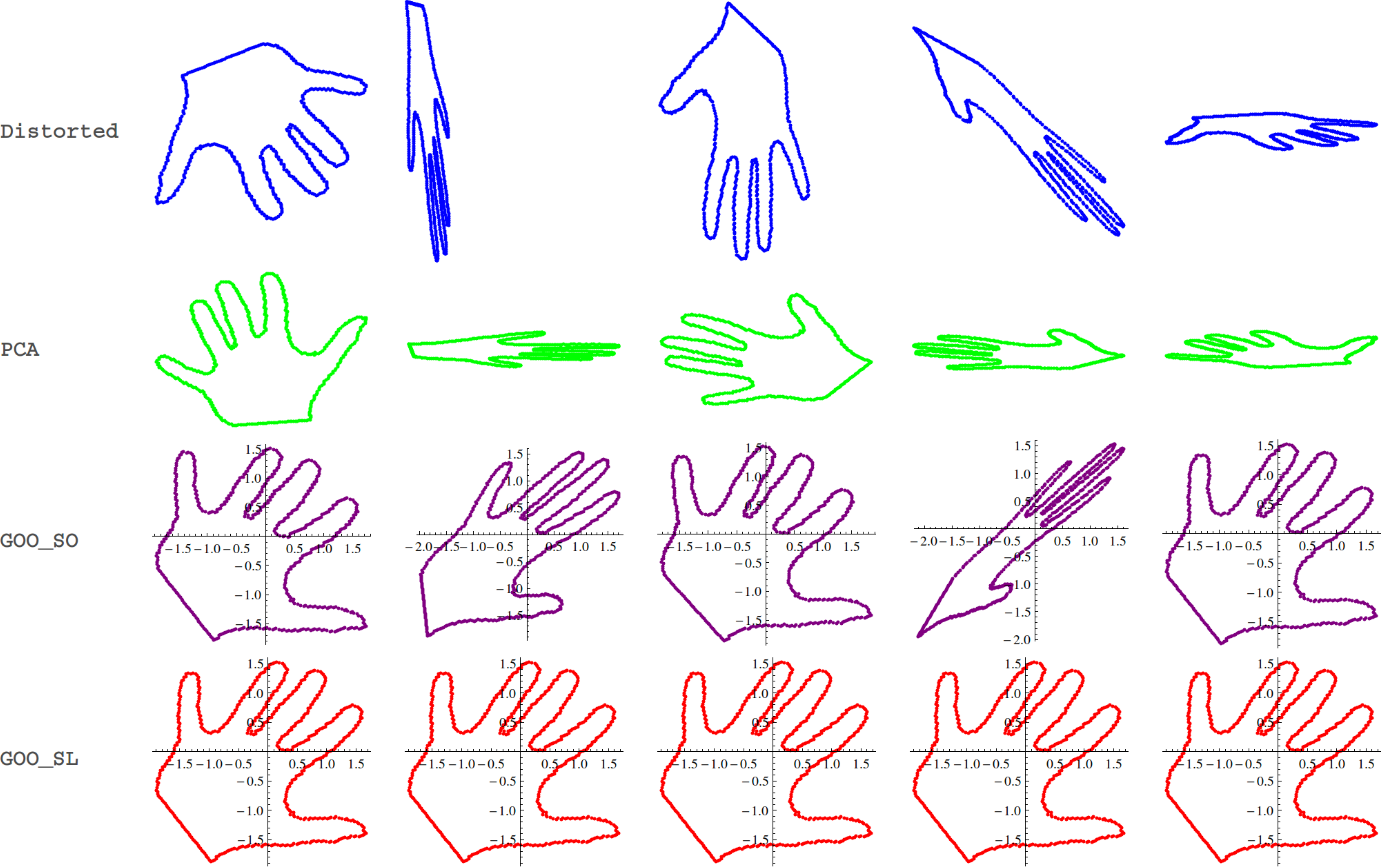}}

\end{center}
   \caption{This figures show the results of normalizing distorted point clouds
   by different methods.
   The rows marked with ``Distorted'' consist of distorted point clouds used as
   input to various normalization methods.
   The rows marked with ``PCA'' contain results of normalization by principal
   component analysis.  It can be seen that the effects of
   rotational distortion have been partially eliminated, but results of shearing
   and squeezing remain.
   The rows marked with ``GOO\_SO'' contain results of normalization using
   special orthogonal group in GOO. It can be seen that the effects of
   rotational distortion have been partially eliminated, but results of shearing
   and squeezing remain.
   The rows marked with ``GOO\_SL'' contain results of
   normalization using Algorithm~\ref{alg:special-linear}, where it can be seen that the algorithm
   can produce approximately the same point clouds after eliminating distortions
like shearing, squeezing and rotation.}
\label{fig:normal_form}
\end{figure}


In Figure~\ref{fig:sampling_rate} we study the impact of number of points on
the canonical form found by the GOO.
We can see that though the number of points in the canonical form vary between
180 and 260, the canonical form is nearly the same, module different
orientations. In this case although Step~2 in
Algorithm~\ref{alg:special-linear} cannot completely eliminate the ambiguity of
four possible orientations of point clouds, we can simply remove this ambiguity by
enumerating all four possible orientations when doing comparison.
This property means that when comparing shape of two point clouds, it is not necessary to
require two point clouds to have exactly the same number of points when we are
comparing based on the canonical forms.

\begin{figure}
\subfigtopskip = 0pt
\begin{center}
\centering
\subfigure{\includegraphics[height=48mm,
width=130mm]{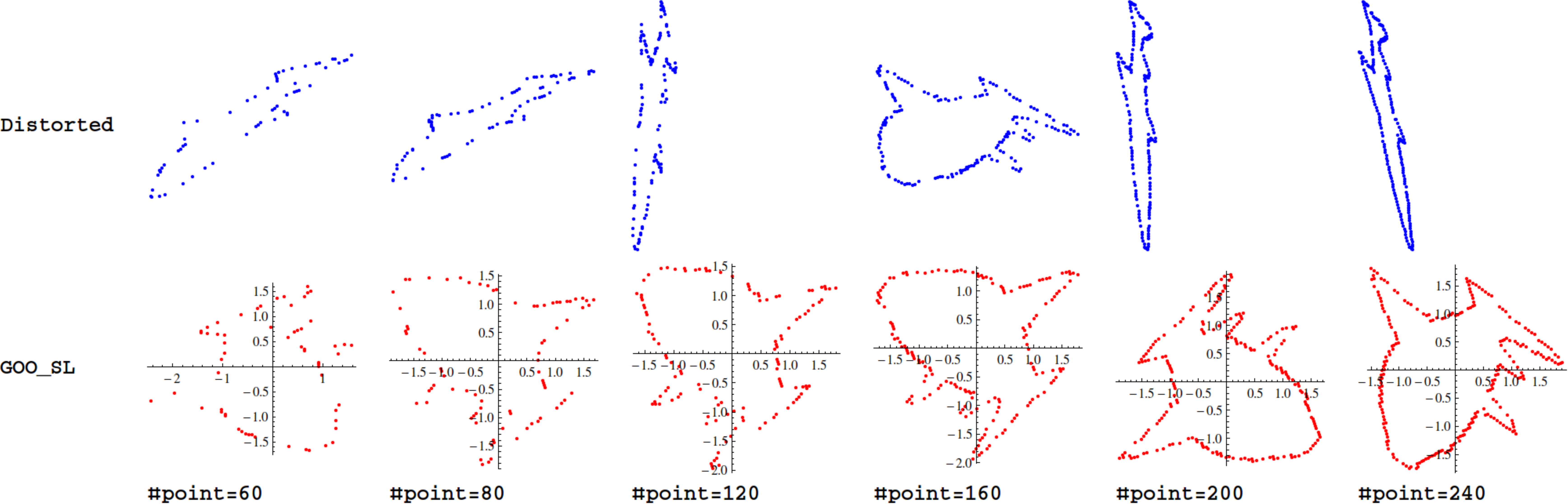}}
\subfigure{\includegraphics[height=48mm,
width=130mm]{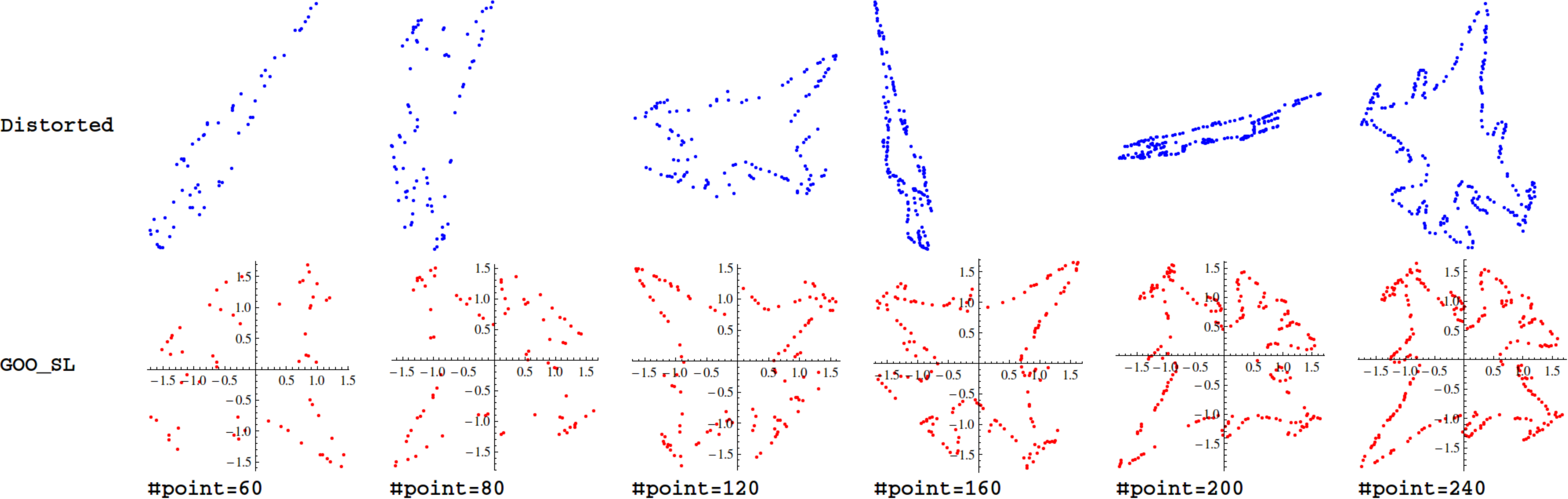}}
\subfigure{\includegraphics[height=48mm,
width=130mm]{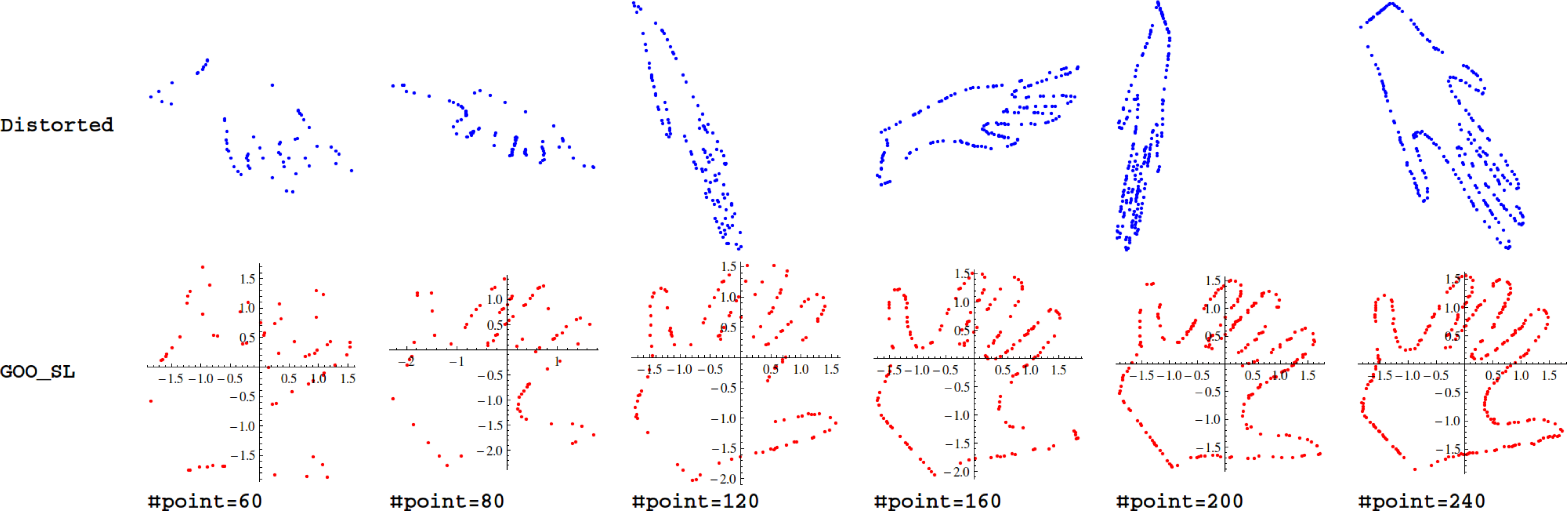}}

\end{center}
   \caption{The point clouds in rows marked with ``Distorted'' are the results
   of applying distortion generated by random special linear matrix to original
   point clouds.
   The point clouds in rows marked with ``GOO\_SL'' are after normalization by
   special linear group. From left to right, the sparser point clouds are
   generated by sampling from the rightmost densest point cloud respectively.
   It can be seen that for all three examples, though density varies, the
   shape of the normalized point clouds remains stable, modulo four possible
   orientations.}
\label{fig:sampling_rate}
\end{figure}

\section{Related work}
In this section we discuss the related work not yet covered in the previous sections.

An early example of GOO is a so-called quadratic assignment problem \cite{sahni1976p} where the
following optimization problem is studied:
\[
\inf_{\X\in\Pi_n} \tr(\W \X \D \X^\top) \text{,}
\]
where $\Pi_n$ is the permutation matrix group. Due to the combinatorial nature
of $\Pi_n$, QAP is NP-hard. In contrast,  we mainly work on
non-combinatorial matrix groups in this paper.

In \cite{zhang2012tilt}, a non-linear GOO  is used to find texture
invariant to rotation for 2D point cloud $\M\in\FB^{n\times 2}$:
\[
\inf_{\G\in\OG} \|\Rasterize(\Poly(\M \G))\|_* \text{.}
\]
As $\OG$ is a unit group, the optimization is well defined and the
induced matrix decomposition is found to be useful as a rotation-invariant
representation for texture. The same paper also considers finding
homography-invariant representation for texture for 2D point cloud
$\M\in\FB^{n\times 2}$:
\[
\inf_{\G\in\HG,\; \mu(\Poly(\lambda \M \G))=\const} \|\Rasterize[\Poly(\lambda
\M \G)]\|_*
\text{.}
\]
Note that here a coefficient $\lambda$ is intentionally
added to ensure $\mu$ measure of the point cloud be preserved \wrt the action
of $\G$.

In \cite{hu2013fast} the following formulation is used to get the Ky-Fan $k$-norm
\cite{horn1991topics} of a matrix $\M\in\FB^{m\times n}$ when $m\ge k$ and
$n\ge k$:
\[
\sup_{\G_1\in\FB^{m\times k}, \G_1^\top\G_1 = \I_k,\G_2\in\FB^{n\times k},
\G_2^\top\G_2 = \I_k} \tr(\G_1^\top \M \G_2)\text{.}
\]

Note that the above optimization is not a GOO when $k^2\neq m n$ as in that case
$\G_2^\top \otimes \G_1^\top \in \FB^{k^2\times m n}$ does not form a group.

\section{Conclusion}
\label{sec:conclusion}

In this paper, we have studied an optimization problem  over the group orbit
generated by action of group $\GM$ and referred to it as the \emph{Group Orbit
Optimization} (GOO).
We have shown that SVD/QR/LU/Cholesky decomposition  can be reformulated under
the GOO framework as in Theorem~\ref{theorem:decompostion-as-optimization}.
Moreover, we have used GOO to induce tensor decomposition in
Theorem~\ref{thm:unfolded-diagonal}. The unified framework of GOO for matrix
decomposition and tensor decomposition allows us to bridge them. In
particular, we have presented Lemma~\ref{lem:lifting}, which relates the infimum of
the tensor-based GOO with the infimum of GOO of the matrix unfolded to the
tensor. Finally, we have applied GOO to point cloud data to demonstrate the use of data
normalization in shape matching when objects are represented as point clouds.

Our work has demonstrated that the unified framework of GOO for
data normalization is both of theoretical interests in providing a new perspective on
matrix and tensor decompositions, and of practical interests in modeling and
elimination of distortions present in real world data.


\bibliography{sld_v3}
\bibliographystyle{siam}

\end{document}